\chardef\bslash=`\\ 
\newtheorem{theorem}{Theorem}[section]
\newtheorem{corollary}[theorem]{Corollary}
\newtheorem{lemma}[theorem]{Lemma}
\newtheorem{proposition}[theorem]{Proposition}
\theoremstyle{definition}
\newtheorem{definition}{Definition}[section]
\newcommand{\N}{\mathbb{N}}
\newcommand{\R}{\mathbb{R}}
\newcommand{\C}{\mathbb{C}}
\newcommand{\eval}[2][\right]{\relax
  \ifx#1\right\relax \left.\fi#2#1\rvert}
\title[Discretization]{Discretization of Topological  Spaces}
\author[M. Amini and N. Golestani]{Massoud Amini and Nasser Golestani}
\address{Department of Pure Mathematics\\ Faculty of Mathematical Sciences\\
 Tarbiat Modares University\\
 Tehran\\ Iran}
\email {mamini@modares.ac.ir, n.golestani@modares.ac.ir}
\thanks{The first author was partly supported by a grant from IPM (No. 93430215).}
\keywords{discretization, compactification,
 functor, $\alpha$-scattered space, Stonean space}
\subjclass[2010]{54D35, 46L85, 46M15}
\begin{document}

\begin{abstract}
 There are several compactification procedures in topology,
but there is only one standard discretization, namely,
replacing the original topology with the discrete topology.
We give a notion of discretization which is dual (in categorical sense) to compactification
and give examples of discretizations.
Especially, a discretization functor from the category of $\alpha$-scattered  Stonean spaces
to the category of discrete spaces is constructed which is the converse of the
Stone-\v{C}ech compactification functor.
The interpretations of discretization in the level of algebras of functions are given.
\end{abstract}

\maketitle
\tableofcontents


\section{Introduction}

\noindent

People working with topological spaces feel relief when the underlying space is either compact or discrete, notions which are two sides of a spectrum, and in a sense "dual" to each other. It is natural to wish for a way to make topological spaces either compact or discrete. The first practice is known as compactification, and there are already a range of schemes available in general topology to make it happen. Somewhat of a surprise, there is only one standard procedure to make something discrete, namely by replacing the original topology with a discrete one. This keeps the underlying set unchanged and only changes the topology, while almost all compactification procedures change the underlying set as well as the topology.

The original motivation of the present paper was a search for a notion of ``discretization" which allows a change in the underlying set. The rationale behind the notion was that it should somehow turn out to be dual to the notion of compactification (naively, they should cancel out each other for spaces which are already compact or discrete). In the language of category theory, we expected the two procedures to be equivalences of categories, when appropriately restricted.

We expected that at least on the compactification side, most of the things we need should be already known (something which was essentially true), but we needed to restate them in the category language (Section~2). Also, as already known, some well known compactification procedures (like Bohr compactification of abelian groups) are not compactification in a strong sense (as those like one-point, or Stone-\v{C}ech compactification). We made this more explicit by defining three notions of weak, preweak, and (strong) compactification (Section 3). For a given space $X$, these correspond to a choice of unital subalgebras of $\mathrm{C}_{b}(X)$ with certain separation properties (Theorem~\ref{thrpre} and
Propositions~\ref{propcomp} and \ref{propweak}). To underline the categorical aspects of the underlying notions, we defined (weak, preweak) compactification functor (Section 4), which provides an algebraic machinery to produce various examples of compactifications (Proposition~\ref{propredu}). Parallel to the three levels of ``compactification" we introduced three notions of  weak, preweak, and (strong) discretization (Section~5). Like in the case of compactification, not every space has a discretization. A locally compact Hausdorff space
 has a discretization
if and only if
it is $\alpha$-scattered (Theorem~\ref{thrchdisc}). The duality, in the case of Stone-\v{C}ech compactification, is established by showing that
for a Tychonoff space $X$,
 $\delta(\beta(X))=X$ if and only if $X$ is discrete, and
$\beta(\delta(X))=X$ if and only if $X$ is
 Stonean and $\alpha$-scattered (Theorem~\ref{thrdual}), where
 $\delta (X)$ denotes the set of isolated points of $X$. A categorical approach to discretization is given (Section~6) and it is shown that the functor
$\delta :\mathcal{D}\to  \mathbf{Disc}$
from the category of $\alpha$-scattered  Stonean spaces
to the category of discrete spaces is the natural inverse of the
Stone-\v{C}ech compactification functor $\beta :\mathbf{Disc}\to \mathcal{D}$
(Theorem~\ref{thrcdual}). Various discretizations of a locally compact Hausdorff
space $X$ are in correspondence with certain ideals of $\mathrm{C}_{0}(X)$ (Section~7).
It is shown that  a locally compact Hausdorff space
$X$ has a discretization if and only if
$\mathrm{C}_{0}(X)$ has a closed essential ideal which is generated by
its minimal projections (Corollary~\ref{cordisc}).

\section{Preliminaries}
\noindent
In this section we establish some terminologies and review some known results on
the functor $\mathrm{C}$ and the Gelfand transform $\mathcal{G}$. We give
some modifications that we shall need in the sequel. We follow
\cite{en89} for  all topological noitions unless otherwise stated.

Let us recall the definition and some known properties of the functor
$\mathrm{C}_{b}: \mathbf{Top}_{3\frac{1}{2}}\to \mathbf{C^{*}com}_{1}$ from
the category of Tychonoff spaces with continuous maps  to
the category of commutative unital C$^{*}$-algebras with unital $*$-homomorphisms.
We refer the reader to \cite{mu90}  and \cite{ta79} for the definition and
some elementary properties of C$^{*}$-algebras.
In this note we are mostly concerned with  commutative C$^{*}$-algebras,
which are isomorphic to $\mathrm{C}_{0}(X)$
for a  locally compact Hausdorff space $X$.
Note that, the involution  on $\mathrm{C}_{0}(X)$ assigns to each $f\in \mathrm{C}_{0}(X)$
its conjugate $f^{*}$ defined by $f^{*}(x)=\overline{f(x)}$.
By a $*$-homomorphism  we mean a linear map which preserves multiplication and
involution.

The contravariant functor
$\mathrm{C}_{b}:\mathbf{Top}_{3\frac{1}{2}}\to \mathbf{C^{*}com}_{1}$ assignes to $X$ the algebra
$\mathrm{C}_{b}(X)$  of all bounded continuous functions on
$X$.
If $f:X\to Y$ is a continuous map then
$\mathrm{C}_{b}(f):\mathrm{C}_{b}(Y)\to \mathrm{C}_{b}(X)$
is defined by $\mathrm{C}_{b}(f)(g)=g\circ f$.

Let
$\mathbf{Top}_{\mathrm{cpt,2}}$ be the category of compact Hausdorff spaces with continuous maps.
The restriction of $\mathrm{C}_{b}$ to $\mathbf{Top}_{\mathrm{cpt,2}}$, denoted by
$\mathrm{C}:\mathbf{Top}_{\mathrm{cpt,2}}\to \mathbf{C^{*}com}_{1}$, is an equivalence
of categories (see \cite{ma98} for categorical definitions). In fact,
define the contravariant functor
$\mathrm{\Phi}:\mathbf{C^{*}com}_{1} \to \mathbf{Top}_{\mathrm{cpt,2}}$ by
$\mathrm{\Phi}(A)=\mathrm{\Phi}_{A}$, the character  space of a C$^{*}$-algebra $A$,
i.e., the set of non-zero homomorphisms from $A$ to $\C$.
For
a unital $*$-homomorphism $\varphi :A\to B$ in $\mathbf{C^{*}com}_{1}$ define
$\mathrm{\Phi}(\varphi): \mathrm{\Phi}_{B}\to \mathrm{\Phi}_{A}$
by $\mathrm{\Phi}(\varphi)(\xi)=\xi\circ \varphi$.
For each $X\in \mathbf{Top}_{\mathrm{cpt,2}}$ let
$\mathrm{ev}_{X}:X\to \mathrm{\Phi}_{\mathrm{C}(X)}$ be the evaluation map associated to $X$
defined by $\mathrm{ev}_{X}(x)(f)=f(x)$, for  $x\in X$ and $f\in \mathrm{C}(X)$.
For
each $A\in \mathbf{C^{*}com}_{1}$ let $\mathcal{G}_{A}: A\to \mathrm{C}(\mathrm{\Phi}_{A})$
be the Gelfand transform associated to $A$, which is defined by
$\mathcal{G}_{A}(a)(\xi)=\xi(a)$, for each $a\in A$ and
$\xi\in \mathrm{\Phi}_{A}$.
Then we have the following well-known result \cite{mu90, ta79}.
\begin{theorem}\label{thrc}
The  functor $\mathrm{C}:\mathbf{Top}_{\mathrm{cpt,2}}\to \mathbf{C^{*}com}_{1}$ is an
equivalence of categories. More precisely, considering the  functor
$\mathrm{\Phi}:\mathbf{C^{*}com}_{1} \to \mathbf{Top}_{\mathrm{cpt,2}}$ we have
$\mathcal{G}:\mathrm{id}_{\mathbf{C^{*}com}_{1}}\cong
\mathrm{C}\mathrm{\Phi}$ and
$\mathrm{ev}: \mathrm{id}_{\mathbf{Comp}_{2}}\cong \mathrm{\Phi} \mathrm{C}$.
\end{theorem}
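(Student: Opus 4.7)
The plan is to establish the Gelfand--Naimark duality in two halves: first construct the candidate inverse functor $\mathrm{\Phi}$ on objects and morphisms, then show $\mathcal{G}_A$ and $\mathrm{ev}_X$ are isomorphisms in their respective categories, and finally verify naturality. By the standard categorical criterion for equivalence (see \cite{ma98}), once we have natural isomorphisms $\mathcal{G}:\mathrm{id}\Rightarrow \mathrm{C}\mathrm{\Phi}$ and $\mathrm{ev}:\mathrm{id}\Rightarrow\mathrm{\Phi}\mathrm{C}$ the equivalence follows automatically.

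First I would set up $\mathrm{\Phi}_A$: equip it with the weak-$*$ topology inherited from the unit ball of $A^{*}$, verify that $\mathrm{\Phi}_A \cup \{0\}$ is weak-$*$ closed in the unit ball (since multiplicativity and $\xi(1)=1$ or $\xi=0$ are weak-$*$ closed conditions), and conclude via Banach--Alaoglu that $\mathrm{\Phi}_A$ is compact Hausdorff. Functoriality of $\mathrm{\Phi}$ is routine: $\xi\circ\varphi$ is again a non-zero multiplicative functional because $\varphi$ is unital, and continuity in the weak-$*$ topologies is a direct check.

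Next, for a fixed $A\in\mathbf{C^{*}com}_1$, the map $\mathcal{G}_A$ is clearly a unital $*$-homomorphism into $\mathrm{C}(\mathrm{\Phi}_A)$. The key analytic input is that $\mathcal{G}_A$ is isometric: for self-adjoint $a$ one has $\|a\|=r(a)=\sup_{\xi\in\mathrm{\Phi}_A}|\xi(a)|$ (spectral radius in a commutative unital $C^{*}$-algebra equals the norm on self-adjoints), and the general case follows from $\|a\|^{2}=\|a^{*}a\|$. Therefore $\mathcal{G}_A(A)$ is a closed unital $*$-subalgebra of $\mathrm{C}(\mathrm{\Phi}_A)$; since distinct characters are separated by some $a\in A$, the Stone--Weierstrass theorem yields surjectivity. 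Naturality in $A$ follows by unwinding the definitions: for $\varphi:A\to B$ and $a\in A$, $\xi\in \mathrm{\Phi}_B$, both $\mathrm{C}(\mathrm{\Phi}(\varphi))(\mathcal{G}_A(a))(\xi)$ and $\mathcal{G}_B(\varphi(a))(\xi)$ equal $\xi(\varphi(a))$.

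For a compact Hausdorff $X$, the map $\mathrm{ev}_X$ is continuous (pointwise evaluations on $\mathrm{C}(X)$ give the weak-$*$ topology on the character space), and it is injective because $\mathrm{C}(X)$ separates points of $X$ by Urysohn's lemma. Surjectivity is the main obstacle and is where the argument has most content: given $\xi\in \mathrm{\Phi}_{\mathrm{C}(X)}$, its kernel is a maximal ideal $M$ of $\mathrm{C}(X)$; I would show that $\bigcap_{f\in M}f^{-1}(0)$ is non-empty by a compactness argument (otherwise finitely many $f_1,\dots,f_n\in M$ would have no common zero, making $\sum |f_i|^{2}\in M$ invertible in $\mathrm{C}(X)$, contradicting $M\ne \mathrm{C}(X)$), and any $x$ in this intersection satisfies $\mathrm{ev}_X(x)=\xi$. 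As a continuous bijection from a compact space to a Hausdorff space, $\mathrm{ev}_X$ is a homeomorphism. Naturality in $X$ is again a direct diagram chase: for $f:X\to Y$, both $\mathrm{\Phi}(\mathrm{C}(f))(\mathrm{ev}_X(x))$ and $\mathrm{ev}_Y(f(x))$ send $g\in\mathrm{C}(Y)$ to $g(f(x))$. The principal technical hurdles are therefore the spectral radius identity underlying isometricity of $\mathcal{G}_A$ and the maximal-ideal-to-point argument for $\mathrm{ev}_X$; everything else reduces to formal verifications.
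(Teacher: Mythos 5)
Your sketch is the standard Gelfand--Naimark duality argument and is correct; the paper itself gives no proof of Theorem~\ref{thrc}, stating it as well known and citing \cite{mu90, ta79}, where essentially your argument (isometry of $\mathcal{G}_A$ via the spectral radius, Stone--Weierstrass for surjectivity, and the maximal-ideal-to-point argument for $\mathrm{ev}_X$) appears. The only step you leave implicit is that characters of a C$^{*}$-algebra are automatically $*$-preserving, which is what makes $\mathcal{G}_A(A)$ closed under conjugation before invoking Stone--Weierstrass; this is standard and does not affect correctness.
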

Let us recall the following property of the functor $\mathrm{C}$. This  is part of the literature.
\begin{proposition}\label{proccpt}
Let $X$ and $Y$ be compact Hausdorff spaces and $f:X\to Y$ be a continuous map.
Consider the $*$-homomorphism $\mathrm{C}(f): \mathrm{C}(Y)\to \mathrm{C}(X)$.
Then we have:
\begin{itemize}
\item[(1)]
$f$ is injective if and only if $\mathrm{C}(f)$ is surjective;
\item[(2)]
$f$ is surjective if and only if $\mathrm{C}(f)$ is injective;
\item[(3)]
$f$ is a homeomorphism if and only if $\mathrm{C}(f)$ is bijective.
\end{itemize}
\end{proposition}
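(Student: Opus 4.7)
The plan is to handle the three parts by using two classical tools about compact Hausdorff spaces: Urysohn's lemma (available because compact Hausdorff is normal) and the fact that a continuous bijection between compact Hausdorff spaces is a homeomorphism. I would prove (2) first, then (1), and deduce (3) by combining them.

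For part (2), the ``if'' direction is direct: if $f$ is surjective and $\mathrm{C}(f)(g)=g\circ f=0$ in $\mathrm{C}(X)$, then $g$ vanishes on $f(X)=Y$, so $g=0$. For the converse, assume $f$ is not surjective and pick $y_{0}\in Y\setminus f(X)$; since $f(X)$ is compact (hence closed) in Hausdorff $Y$, Urysohn's lemma produces a nonzero $g\in \mathrm{C}(Y)$ with $g\equiv 0$ on $f(X)$ and $g(y_{0})\neq 0$. Then $\mathrm{C}(f)(g)=0$ with $g\neq 0$, so $\mathrm{C}(f)$ is not injective.

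For part (1), for the contrapositive of one direction, suppose $f$ is not injective, say $f(x_{1})=f(x_{2})$ with $x_{1}\neq x_{2}$. Every $h$ in the image of $\mathrm{C}(f)$ satisfies $h(x_{1})=g(f(x_{1}))=g(f(x_{2}))=h(x_{2})$, whereas Urysohn's lemma on $X$ yields some $h_{0}\in \mathrm{C}(X)$ with $h_{0}(x_{1})\neq h_{0}(x_{2})$; hence $\mathrm{C}(f)$ is not surjective. Conversely, assume $f$ is injective. Then $f$ is a continuous bijection from the compact space $X$ onto $f(X)\subseteq Y$, which is compact Hausdorff, so $f:X\to f(X)$ is a homeomorphism. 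Given $h\in \mathrm{C}(X)$, the function $h\circ f^{-1}\in \mathrm{C}(f(X))$ extends, by the Tietze extension theorem applied to the closed subset $f(X)\subseteq Y$, to some $g\in \mathrm{C}(Y)$; then $\mathrm{C}(f)(g)=g\circ f=h$, so $\mathrm{C}(f)$ is surjective.

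Part (3) follows formally: $f$ is a homeomorphism iff $f$ is a continuous bijection (using compactness and Hausdorffness), which by (1) and (2) happens iff $\mathrm{C}(f)$ is both surjective and injective, i.e., bijective. The main potential obstacle is the surjectivity step in (1), where one must produce a continuous extension from $f(X)$ to all of $Y$; this is exactly the content of Tietze's theorem and is the only place where we use more than normality in a nontrivial way. Everything else reduces to routine applications of Urysohn's lemma and the definition $\mathrm{C}(f)(g)=g\circ f$.
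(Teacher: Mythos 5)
Your proof is correct. The paper does not actually prove this proposition---it states it as ``part of the literature''---so there is nothing to compare against; your argument (Urysohn for the two ``not injective/not surjective'' directions, Tietze extension off the closed set $f(X)$ for surjectivity of $\mathrm{C}(f)$, and the compact-Hausdorff ``continuous bijection is a homeomorphism'' fact for (3)) is the standard one and is complete, modulo the routine remark that Tietze is applied to the real and imaginary parts of the complex-valued function $h\circ f^{-1}$.
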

We shall need a version of Proposition~\ref{proccpt} for Tychonoff spaces in the sequel.
Recall that a family
$\mathfrak{F}\subseteq \mathrm{C}_{b}(X)$  separates points and
closed sets if for every closed set $E$ of $X$ and every $x\in X\setminus E$
there exists $f\in \mathfrak{F}$ such that $f(x)\not\in \overline{f(E)}$.
Also recall the Stone-\v{C}ech compactification functor
$\beta : \mathbf{Top}_{3\frac{1}{2}}\to \mathbf{Top}_{\mathrm{cpt,2}}$.
For each $X$ in $\mathbf{Top}_{3\frac{1}{2}}$,
$\beta X$ is the closure of the range of the homeomorphic embedding
$\beta_{X}:X\to X^{\mathrm{C}(X,\mathrm{I})}$ defined by
$\beta_{X}(x)(f)=f(x)$, $x\in X$, $f\in \mathrm{C}(X,\mathrm{I})$, where
$\mathrm{C}(X,\mathrm{I})$ is the set of continuous functions from $X$ to
$\mathrm{I}=[0,1]$.
 It is well-known that
$\mathrm{C}(\beta_{X}): \mathrm{C}(\beta X)\to \mathrm{C}_{b}(X)$ is a $*$-isomorphism.

We need the following lemma in the proof of Proposition~\ref{propcty}, which is a slight modification of
\cite[Proposition~2.6]{fo95} and is proved using a standard compactness argument. Recall that
for a topological space $X$, $\mathrm{C}_{0}(X)$ denotes the set of all complex-valued
continuous functions on $X$  vanishing at infinity.
\begin{lemma}\label{lemuni}
Let $X$ be a locally compact Hausdorff space and $\mathcal{U}$ be a uniformity
on $X$
which gives the topology of $X$. Then each function $f\in \mathrm{C}_{0}(X)$
is uniformly continuous with respect to the uniformity $\mathcal{U}$ on $X$ and the Euclidean
metric on $\mathbb{C}$.
\end{lemma}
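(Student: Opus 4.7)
The plan is a standard compactness argument that exploits the fact that $f \in \mathrm{C}_0(X)$ is ``almost everywhere small''. Fix $f \in \mathrm{C}_0(X)$ and $\varepsilon > 0$; the goal is to produce an entourage $W \in \mathcal{U}$ such that $(x,y) \in W$ implies $|f(x)-f(y)| < \varepsilon$.

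First I would localize near where $f$ is large. Set
\[
K = \{x \in X : |f(x)| \geq \varepsilon/3\}.
\]
Since $f$ vanishes at infinity, $K$ is compact. For each $x \in K$, continuity of $f$ together with the assumption that $\mathcal{U}$ induces the topology of $X$ gives a symmetric entourage $V_x \in \mathcal{U}$ with
\[
V_x[x] \subseteq \{y \in X : |f(y)-f(x)| < \varepsilon/3\}.
\]
Next, using the standard property of uniform spaces, I would choose a symmetric entourage $W_x \in \mathcal{U}$ satisfying $W_x \circ W_x \subseteq V_x$.

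Now I would invoke compactness of $K$: the neighborhoods $W_x[x]$ for $x \in K$ form an open cover, so there exist $x_1,\dots,x_n \in K$ with $K \subseteq \bigcup_{i=1}^n W_{x_i}[x_i]$. Define the entourage
\[
W = \bigcap_{i=1}^n W_{x_i},
\]
which belongs to $\mathcal{U}$ as a finite intersection of entourages. The verification that $W$ works proceeds by cases on a pair $(x,y) \in W$. If $x \in K$, then $x \in W_{x_i}[x_i]$ for some $i$, and combining this with $(x,y) \in W \subseteq W_{x_i}$ and symmetry yields $(x_i,y) \in W_{x_i} \circ W_{x_i} \subseteq V_{x_i}$; hence $|f(x)-f(x_i)|$ and $|f(y)-f(x_i)|$ are both less than $\varepsilon/3$, so $|f(x)-f(y)| < 2\varepsilon/3 < \varepsilon$. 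The case $y \in K$ is symmetric. If both $x,y \notin K$, then $|f(x)|, |f(y)| < \varepsilon/3$, so $|f(x)-f(y)| < 2\varepsilon/3 < \varepsilon$.

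The only mildly delicate step is realizing that the cover need only be chosen over $K$ rather than over all of $X$, and then observing that ``far from $K$'' the bound follows trivially from $f$ being small there. This is where the hypothesis $f \in \mathrm{C}_0(X)$ is actually used; everything else is generic uniform-space bookkeeping (existence of symmetric halves $W_x$ with $W_x \circ W_x \subseteq V_x$, and the fact that $\mathcal{U}$-entourages produce topological neighborhoods). No obstacle beyond getting the three-way case analysis right.
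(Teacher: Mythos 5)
Your argument is correct and is precisely the ``standard compactness argument'' that the paper invokes without writing out (it cites a variant of \cite[Proposition~2.6]{fo95} and omits the details): localize to the compact set where $|f|\geq\varepsilon/3$, cover it by half-sized symmetric entourages, intersect, and split into the three cases you describe. The only cosmetic point is that the sets $W_x[x]$ are neighborhoods rather than literally open sets, so one should cover $K$ by their interiors before extracting a finite subcover; this is routine and does not affect the proof.
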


Most parts of the next result is known (except the last part, which follows from the above lemma).

\begin{proposition}\label{propcty}
Let $X$ and $Y$ be Tychonoff spaces and $f:X\to Y$ be  continuous.
Consider the $*$-homomorphism $\mathrm{C}(f): \mathrm{C}_{b}(Y)\to \mathrm{C}_{b}(X)$.
Then:
\begin{itemize}
\item[(1)]
$f$ is injective if and only if $\mathrm{ran}(\mathrm{C}(f))$ separates the points of $X$;
\item[(2)]
$f$ has dense range if and only if $\mathrm{C}(f)$ is injective;
\item[(3)]
$f$ is a homeomorphic embedding if
$\mathrm{ran}(\mathrm{C}(f))$ separates  points and closed sets.
The converse is true when $X$ is locally compact.
\end{itemize}
\end{proposition}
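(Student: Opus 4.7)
The plan is to treat parts (1)--(3) separately; only the converse in (3) requires any work beyond exploiting the Tychonoff property of $Y$.

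Parts (1) and (2) follow directly from Tychonoff separation. For (1), if $f$ is injective and $x_{1}\neq x_{2}$, then $f(x_{1})\neq f(x_{2})$, so some $g\in \mathrm{C}_{b}(Y)$ separates them and $\mathrm{C}(f)(g)=g\circ f$ separates $x_{1}$ from $x_{2}$ in $X$; the converse is the immediate contrapositive. For (2), $\mathrm{C}(f)(g)=0$ is equivalent, by continuity, to $g$ vanishing on $\overline{\mathrm{ran}(f)}$; hence $\mathrm{C}(f)$ is injective if and only if no nonzero $g\in \mathrm{C}_{b}(Y)$ vanishes on $\overline{\mathrm{ran}(f)}$, which in turn is equivalent to $\overline{\mathrm{ran}(f)}=Y$ via a Tychonoff separation of some point outside this closure.

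For the forward direction of (3), point separation combined with (1) already yields injectivity of $f$, so it remains to show $f\colon X\to f(X)$ is closed onto its image. Given $E\subseteq X$ closed and $x\in X\setminus E$, the hypothesis supplies $g\in \mathrm{C}_{b}(Y)$ with $g(f(x))\notin \overline{g(f(E))}$; the set $V:=g^{-1}(\mathbb{C}\setminus \overline{g(f(E))})$ is then open in $Y$, contains $f(x)$, and is disjoint from $f(E)$, so $V\cap f(X)$ witnesses that $f(E)$ is closed in $f(X)$. Hence $f$ is a homeomorphic embedding.

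The substantive step is the converse of (3). Identify $X$ with $f(X)\subseteq Y$; given $E\subseteq X$ closed and $x\in X\setminus E$, local compactness of $X$ provides a compact $K\subseteq X$ with $x\in \mathrm{int}_{X}(K)$ and $K\cap E=\emptyset$. The general subspace-closure identity $\overline{E}^{Y}\cap X=\overline{E}^{X}=E$ then forces $\overline{E}^{Y}\cap K=\emptyset$, and Tychonoffness of $Y$ lets us separate the disjoint compact set $K$ from the closed set $\overline{E}^{Y}$ by some $g\in \mathrm{C}(Y,[0,1])$ with $g\equiv 1$ on $K$ and $g\equiv 0$ on $\overline{E}^{Y}$; then $g(x)=1$ while $\overline{g(E)}\subseteq\{0\}$, so $\mathrm{C}(f)(g)=g\circ f$ is the required separator. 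The main obstacle is that $E$ need not be closed in $Y$, which is precisely why local compactness of $X$ is essential: it supplies the compact neighborhood $K$ that, combined with the subspace-closure identity, isolates $E$ from a neighborhood of $x$ inside $Y$. The remark preceding the proposition suggests that Lemma~\ref{lemuni} provides an alternative route, via uniformly continuous extension to $Y$ of a separating function first produced in $\mathrm{C}_{0}(X)$.
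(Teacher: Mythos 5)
Your proof is correct, but for the only nontrivial part it takes a genuinely different route from the paper. The paper treats (1), (2) and the forward direction of (3) as known, and derives the converse of (3) from Lemma~\ref{lemuni}: one produces a separating function in $\mathrm{C}_{0}(X)$ (using local compactness and Urysohn), observes that it is uniformly continuous for the uniformity induced on $X\cong f(X)$ from $Y$, and then extends it to an element of $\mathrm{C}_{b}(Y)$, which lands in $\mathrm{ran}(\mathrm{C}(f))$. You instead argue entirely inside $Y$: since $f$ is an embedding and $E$ is closed in $X$, the subspace-closure identity gives $\overline{f(E)}^{\,Y}\cap f(X)=f(E)$, and complete regularity of $Y$ separates the compact set $f(K)$ from the closed set $\overline{f(E)}^{\,Y}$. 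This is more elementary and self-contained, and it bypasses the uniformity machinery entirely. It is worth noting that your argument in fact shows local compactness is not needed for the converse: you may take $K=\{x\}$, since $f(x)\notin\overline{f(E)}^{\,Y}$ already follows from the subspace-closure identity, and a point can be separated from a disjoint closed set in the Tychonoff space $Y$; the compact neighborhood buys you nothing beyond what the single point gives. (By contrast, the paper's route through $\mathrm{C}_{0}(X)$ and Lemma~\ref{lemuni} genuinely uses local compactness.) Your treatments of (1), (2) and the forward half of (3) are the standard arguments and are fine; in the forward direction of (3) you correctly use that $X$ is $T_{1}$ so that separation of points and closed sets yields separation of points, hence injectivity via (1).
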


\section{Compactification}
\noindent
In this section we  introduce two other notions (weak  and preweak compactifications)
which are closely related to the notion of compactification and
 have important examples in harmonic analysis and
operator algebras (including the Bohr compactification of a locally compact abelian group).

There is a one-to-one correspondence between (equivalent) compactifications
of a locally compact Hausdorff space $X$ and the unital C$^*$-subalgebras of
$\mathrm{C}_{b}(X)$ which separate the
points and closed sets (see Proposition~\ref{propcomp} below).
The question which arises naturally here is that
what   weaker notions of compactifications
 correspond to all  unital C$^*$-subalgebras of
$\mathrm{C}_{b}(X)$ or to those which only separate the
points.
These are shown to be the weak  and
preweak compactifications of $X$
(see Theorem~\ref{thrpre} and Proposition~\ref{propweak} below).

Later we use a similar
approach to define
a  notion of (weak, preweak) discretization
and obtain suitable counterparts
for   various types of discretizations of $X$
 in terms of
certain ideals in $\mathrm{C}_{0}(X)$.
\begin{definition}\label{defcomp}
Let $X$ be a topological space. Consider an ordered pair
$(f,Y)$ where $Y$ is a compact Hausdorff space and $f: X\to Y$ is a map.
\begin{itemize}
\item[(1)] $(f,Y)$ is called a \emph{compactification} of $X$ if $f$ is a homeomorphic
embedding  with dense range.
\item[(2)] $(f,Y)$ is called a \emph{weak compactification} of $X$ if $f$ is  continuous and injective with dense range.
\item[(3)] $(f,Y)$ is called a \emph{preweak compactification} of $X$ if $f$ is continuous  with dense range.
\end{itemize}
\end{definition}
The term ``weak'' in part~2 refers to the fact that the projective topology
induced by the map $f$ and the space $Y$ to $X$ is weaker than its original topology.
The Stone-\v{C}ech compactification and the one-point compactification of a Tychonoff space
are examples of a compactification. For a locally compact Hausdorff abelian topological group, its Bohr compactification is a weak compactification, which is 
not a compactification, unless the group is already compact \cite[Section~4.7]{fo95}. The same construction is available in the non abelian case \cite{ho64}, but gives only a preweak compactification.

For a semitopological  semigroup $S$, a semigroup compactification of $S$
is a pair $(\psi,X)$, where $X$ is a compact Hausdorff right topological semigroup,
 $\psi:S\to X$ is a continuous homomorphism with dense range, and the left translations
by elements of $\psi(S)$ are continuous on $X$ \cite[Definition~3.1.1]{bjm89}. Each semitoplogical compactification is a preweak compactification.

More examples could be constructed in Banach spaces: let 
$X=Y=\mathrm{B}(\mathcal{H})_{1}$  be the closed unit ball
of the space of bounded operators on a Hilbert space $H$. Give 
$X$ and $Y$ the strong and weak operator topologies, respectively, then $(id,Y)$ is
a weak compactification of $X$ \cite[Theorem~4.2.4]{mu90} which is not a compactification unless $\mathcal{H}$ is
finite dimensional \cite{ta79}. Also for a Banach space $\mathfrak{X}$, let $X, Y$ be the closed unit ball
of $\mathfrak{X}^{*}$ with the norm and weak$^{*}$ topologies, then $(id,Y)$ is a weak compactification of $X$ which is not a compactification unless $\mathfrak{X}$ is
finite dimensional \cite{co90}.
Alternatively, let  $X$ be the closed unit ball
of $\mathfrak{X}$ with the norm topology, and $Y$ be the closed unit ball
of $\mathfrak{X}^{**}$ with the weak$^{*}$ topology. Let $\tau :X\to Y$ be the restriction to $X$
of the canonical embedding of $\mathfrak{X}$ into $\mathfrak{X}^{**}$. Then $(\tau,Y)$ is a weak compactification of $X$ \cite[Proposition~4.1]{co90}) which is not a compactification unless $\mathfrak{X}$ is
finite dimensional.
If we consider $X$ with  the weak topology, then $(\tau,Y)$ is a compactification of $X$.
Moreover, $f$ is onto if and only if  $\mathfrak{X}$ is reflexive \cite{co90}.

It is known that a topological space has a compactification if and only
it is Tychonoff \cite[Theorem~3.2.6]{en89}. Also it is obvious that each topological
space has a preweak compactification (consider the singleton space).
One can ask  which topological spaces have a weak compactification.
Recall that a topological space $X$ is called \textit{completely Hausdorff} (functionally Hausdorff)
if whenever $x\neq y$ in $X$, there is a continuous
function $f:X\to [0,1]$ with $f(x)=0$ and $f(y)=1$ \cite{wi70}.
\begin{theorem}\label{thrwcom}
A topological space has a weak compactification if and only if it is
completely Hausdorff.
\end{theorem}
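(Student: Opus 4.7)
The plan is to prove both directions by relating a weak compactification directly to separating continuous functions valued in $[0,1]$, using the fact that compact Hausdorff spaces are (completely) Tychonoff.

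For the forward direction, suppose $(f,Y)$ is a weak compactification of $X$. Given distinct points $x,y\in X$, injectivity of $f$ yields $f(x)\neq f(y)$ in $Y$. Since $Y$ is compact Hausdorff, it is in particular Tychonoff, so there exists a continuous $g:Y\to[0,1]$ with $g(f(x))=0$ and $g(f(y))=1$. Then $g\circ f:X\to[0,1]$ is continuous and witnesses that $X$ is completely Hausdorff.

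For the converse, assume $X$ is completely Hausdorff and carry out a Stone--\v{C}ech type construction, but without insisting that the embedding be topological. Let $\mathcal{F}=\mathrm{C}(X,\mathrm{I})$ with $\mathrm{I}=[0,1]$, and define $f:X\to \mathrm{I}^{\mathcal{F}}$ by $f(x)(g)=g(x)$, exactly as in the definition of $\beta_{X}$ recalled before Lemma~\ref{lemuni}. By the universal property of product topologies, $f$ is continuous. The assumption that $X$ is completely Hausdorff means that $\mathcal{F}$ separates the points of $X$, so $f$ is injective. Let $Y$ be the closure of $f(X)$ in $\mathrm{I}^{\mathcal{F}}$; as a closed subset of a compact Hausdorff space, $Y$ is compact Hausdorff, and by construction $f(X)$ is dense in $Y$. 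Thus $(f,Y)$ is a weak compactification of $X$.

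There is no serious obstacle in either direction; the only subtlety worth noting is that, in contrast with Theorem~\ref{thrc} and the analogous characterization of Tychonoff spaces, one cannot demand that $f$ be a homeomorphism onto its image, because completely Hausdorff spaces need not be Tychonoff. This is exactly the reason that the construction above yields only a \emph{weak} compactification: $\mathrm{C}(X,\mathrm{I})$ separates points of $X$ (giving injectivity of $f$) but need not separate points from closed sets (which would be required for a topological embedding via Proposition~\ref{propcty}(3)).
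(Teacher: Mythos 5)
Your proof is correct and follows essentially the same route as the paper: the forward direction deduces complete Hausdorffness of $X$ from that of the compact Hausdorff space $Y$ via the injective map $f$, and the converse uses the evaluation map into the Tychonoff cube $\mathrm{I}^{\mathrm{C}(X,\mathrm{I})}$ with $Y$ the closure of its range, exactly as in the paper. The only difference is that you spell out the details (Urysohn separation in $Y$, continuity and injectivity of the evaluation map) that the paper leaves implicit.
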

\begin{proof}
Let $(f,Y)$ be a weak compactification of $X$. Since $Y$ is completely Hausdorff and
$f$ is injective, it follows that $X$ is also completely Hausdorff.
Conversely suppose that $X$ is completely Hausdorff. Let $Z$ be the
Tychonoff cube $\mathrm{I}^{\mathrm{C}(X,\mathrm{I})}$
where $\mathrm{I}=[0,1]$.
Define  $F:X\to Z$ by $F(x)(f)=f(x)$ for each $x\in X$ and
$f\in \mathrm{C}(X,\mathrm{I})$. Let $Y$ be the closure of the range of $F$
in $Z$. Then $(F,Y)$ is a weak compactification of $X$.
\end{proof}

Let $X$ be locally compact Hausdorff space. Then,
each weak compactification of $X$ is a compactification,
if and only if $X$ is already compact. As an example, let $X$ be a locally compact, Hausdorff, and noncompact space.
Choose $x\in X$ and denote by  $X_{x}$ the one-point
compactification of $X\setminus \{x\}$
with $x$ as the point at infinity.
The open subsets of $X_{x}$ are the open subsets of
$X\setminus \{x\}$ and the sets of the form $X\setminus K$ where
$K$ is a compact subset of $X\setminus \{x\}$.
Since $X$ is not compact, $(\mathrm{id},X_{x})$ is a weak compactification of $X$
which is not a compactification.

There are natural examples of preweak compactifications which are not
weak compactifications.
The terminology ``preweak compactification"  in Definition~\ref{defcomp} is justified by the following  proposition.
\begin{proposition}
Let $X$  be a topological space and $(f,Y)$ be a preweak compactification of $X$. Define the equivalence relation
$R$ on $X$ by $xRy$ if{f} $f(x)=f(y)$. Let $q:X\to \frac{X}{R}$ be the canonical quotient map and endow
$\frac{X}{R}$ with the quotient topology. Then there is a unique continuous map
$\tilde{f}: \frac{X}{R}\to Y$ such that $\tilde{f}q=f$.
Moreover, $(\tilde{f},Y)$ is a weak compactification of $\frac{X}{R}$.
The space $\frac{X}{R}$ is $\mathrm{T}_{1}$ and if $f$ is a perfect mapping (i.e., closed and with the property
that $f^{-1}(y)$ is compact for each $y\in Y$),
then $\frac{X}{R}$ is  respectively, Hausdorff, regular, locally compact, or second countable, provided that
$X$ has the corresponding property.
\end{proposition}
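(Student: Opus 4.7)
The plan is to factor $f$ through the quotient and then, in the perfect-map case, identify $X/R$ with $Y$ itself.

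First I would construct $\tilde{f}$ by the universal property of the quotient topology: since $f$ is constant on each $R$-class, $\tilde{f}([x]):=f(x)$ is well-defined, and it is the unique continuous map with $\tilde{f}q=f$. Injectivity is immediate from the definition of $R$, and $\mathrm{ran}(\tilde{f})=\mathrm{ran}(f)$ is dense in $Y$ by hypothesis, so $(\tilde{f},Y)$ is a weak compactification of $X/R$. For the $T_{1}$ property, which needs no extra assumption on $f$, observe that $q^{-1}(\{[x]\})=f^{-1}(\{f(x)\})$ is closed in $X$ (since $Y$ is Hausdorff and $f$ is continuous), whence $\{[x]\}$ is closed in $X/R$ by the definition of the quotient topology.

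The substantive step is the perfect-map case. Assuming $f$ is closed with compact fibers, $f(X)$ is closed in $Y$; combined with density of the range this forces $f(X)=Y$, so $f$ is a closed continuous surjection, hence a quotient map whose fibers coincide with the $R$-classes. It follows that $\tilde{f}$ is a homeomorphism $X/R\to Y$. Since $Y$ is compact Hausdorff, $X/R$ is then automatically Hausdorff, regular, and (locally) compact, and so these three conclusions actually hold without any hypothesis on $X$. For second countability one genuinely uses $X$ second countable: given a countable base $\{U_{n}\}$ of $X$, I would set
\[
V_{F}:=Y\setminus f\!\left(X\setminus\bigcup_{n\in F}U_{n}\right),\qquad F\subseteq\N\ \text{finite},
\]
each of which is open because $f$ is closed. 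To see $\{V_{F}\}$ is a base of $Y$, let $y\in W$ with $W$ open in $Y$; compactness of $f^{-1}(y)$ inside the open set $f^{-1}(W)$ yields a finite $F$ with $f^{-1}(y)\subseteq\bigcup_{n\in F}U_{n}\subseteq f^{-1}(W)$, which, using surjectivity of $f$, gives $y\in V_{F}\subseteq W$.

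The main obstacle is this second countability transfer; the other three properties collapse to the topology of $Y$ as soon as $\tilde{f}$ is recognized as a homeomorphism. In both places the closedness clause of the perfect-map hypothesis is what does the real work: it forces $f(X)=Y$, and it keeps the sets $V_{F}$ open.
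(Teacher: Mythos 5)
Your proof is correct, but it takes a genuinely different route through the perfect-map case than the paper does. The paper works with the quotient map $q$ itself: from the identity $q^{-1}(q(A))=f^{-1}(f(A))$ it deduces that $q$ is closed (since $f$ is closed and continuous) and that each class $R[x]=f^{-1}(f(x))$ is compact, so $q$ is a perfect map onto $X/R$; it then cites Kelley's theorems that perfect maps preserve Hausdorffness, regularity, local compactness, and second countability, handling all four properties uniformly. You instead note that closedness of $f$ together with density of its range forces $f(X)=Y$, so $f$ is a closed continuous surjection, hence a quotient map, and $\tilde{f}:X/R\to Y$ is a homeomorphism; this collapses three of the four conclusions to properties of the compact Hausdorff space $Y$ and shows they hold with no hypothesis on $X$ at all --- a sharper conclusion than the proposition states. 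Only second countability then genuinely uses $X$, and your base $\{V_{F}\}$ is precisely the standard argument behind the Kelley/Engelking result the paper invokes (applied to $f$ rather than to $q$). What the paper's formulation buys is that its argument never uses density of the range, so it would apply verbatim to perfect maps that are not surjective and keeps the four properties on an equal footing; what yours buys is self-containedness and the structural insight that, under the perfect-map hypothesis, the quotient is nothing but $Y$ itself.
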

\begin{proof}
The first statement is easy. For each $A\subseteq X$ we have
$q^{-1}(q(A))=f^{-1}(f(A))$ which shows that $q$ is a closed map. Moreover, $R[x]=f^{-1}(f(x))$
is compact, for each $x\in X$. The other statements follow from
\cite[Theorems~3.12 and 5.20]{ke75}.
\end{proof}
Let $X$ be a Tychonoff space. Let us denote by $\mathbf{Comp}(X)$,
$\mathbf{Comp}_{w}(X)$, and $\mathbf{Comp}_{pw}(X)$ the class of
 compactifications, weak compactifications, and preweak compactifications
of $X$, respectively. Then,
\[
\mathbf{Comp}(X)\subseteq \mathbf{Comp}_{w}(X)
\subseteq \mathbf{Comp}_{pw}(X).
\]
There is a natural way to define an ordering on these classes.
Let $(f,Y)$ and $(g,Z)$ be in
$\mathbf{Comp}_{pw}(X)$. We write
 $(f,Y)\geq (g,Z)$ if there is a continuous map
$h: Y\to Z$ such that $hf=g$, that is the following diagram is commutative:
\[
\xymatrix{
X \ar[r]^{f} \ar[rd]_{g} & Y\ar[d]^{h}\\
& Z \ .
}
\]
Since $f$ has dense range, such a map $h$ is unique (if it exists).
If one wants to consider the \textit{category} of (weak, preweak) compactificatins of
a topological space, the natural candidate  for a morphism from $(f,Y)$ to
$(g,Z)$ in this category would be a map $h$ as above. Note  that
the Hom-sets of this category have at most one element.

The relation $\leq$ is already  defined on $\mathbf{Comp}(X)$
 \cite{en89, ke75}, and most of its properties hold also on
$\mathbf{Comp}_{pw}(X)$. In particular, it is reflexive and transitive
on $\mathbf{Comp}_{pw}(X)$.
Two preweak compactifications $(f,Y)$ and $(g,Z)$ of $X$
are  \textit{equivalent} if there is a homeomorphism
$h:Y\to Z$ such that $hf=g$.
It is easy to see that  $(f,Y)$ and $(g,Z)$ are equivalent if and only if
 $(f,Y)\leq (g,Z)$ and $ (g,Z)\leq (f,Y)$.

If $(f,Y)$ is a preweak compactification of $X$, then
we have $w(Y)\leq \exp d(X)$ and hence $Y$ is embeddable
in the Tychonoff cube $I^{\exp d(X)}$ (see
 \cite[Theorem~3.5.3]{en89},  for the special case of compactification).
 Therefore, each weak compactification of $X$ is equivalent to a subspace
 of $I^{\exp d(X)}$. This shows that the equivalence classes  $\mathcal{C}_{pw}(X)$ of
 $\mathbf{Comp}_{pw}(X)$
  forms
 a set (rather than a proper class).
 The corresponding  equivalence classes of
 $\mathbf{Comp}_{w}(X)$ and $\mathbf{Comp}(X)$ are denoted
 by $\mathcal{C}_{w}(X)$ and $\mathcal{C}(X)$, respectively (the latter
 was introduced in \cite[Section~3.5]{en89}).
 Therefore, $\leq$ is a partial ordering
 on $\mathcal{C}_{pw}(X)$.

We   show that  there is a one-to-one correspondence  between (weak, preweak)
compactifications of $X$ and the spectrum of some certain  unital
C$^{*}$-subalgebras of $\mathrm{C}_{b}(X)$. By a C$^{*}$-subalgebras of $\mathrm{C}_{b}(X)$
(or $\mathrm{C}_{0}(X)$)
we mean a closed (in the uniform metric) subalgebra which is closed under involution.

Let $ \mathbf{Sub}_{1}(\mathrm{C}_{b}(X))$ denote the set of
all unital C$^{*}$-subalgebras of $\mathrm{C}_{b}(X)$ with the inclusion  $\subseteq$ as order. This is a complete lattice.
Let us define the map
$F : \mathbf{Comp}_{pw}(X)\to \mathbf{Sub}_{1}(\mathrm{C}_{b}(X))$ as follows.
Let $(f,Y)$ be in
$\mathbf{Comp}_{pw}(X)$.
By Definition~\ref{defcomp}, $f:X\to Y$ is continuous
with dense range. Consider the unital $*$-homomorphism
$\mathrm{C}(f):\mathrm{C}(Y)\to \mathrm{C}_{b}(X)$, as defined above, and set
$F(f,Y)=\mathrm{ran}(\mathrm{C}(f))$, which is a unital C$^{*}$-subalgebra of $\mathrm{C}_{b}(X)$.
Since $f$ has dense range,
$\mathrm{C}(f):\mathrm{C}(Y)\to F(f,Y)$ is a $*$-isomorphism
(cf.~ Proposition~\ref{propcty}). We need the following lemma whose proof is straightforward and is omitted. 

\begin{lemma}\label{lemord}
 Let  $X$ be Tychonoff space. Let $(f,Y)$ and $(g,Z)$ be  preweak compactifications of
 $X$.
 Then $(f,Y)\leq (g,Z)$ if and only if
 $F(f,Y)\subseteq F(g,Z)$.
 \end{lemma}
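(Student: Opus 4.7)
The plan is to prove each direction by translating between the spatial condition $(f,Y)\leq (g,Z)$ and the algebraic condition $F(f,Y)\subseteq F(g,Z)$ using the functor $\mathrm{C}$ and Gelfand duality.

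For the forward implication I would argue directly by pullback. Suppose there is a continuous $h:Z\to Y$ with $hg=f$. Then for any $\phi\in \mathrm{C}(Y)$,
\[
\mathrm{C}(f)(\phi)=\phi\circ f=\phi\circ h\circ g=\mathrm{C}(g)(\phi\circ h),
\]
so $\mathrm{ran}(\mathrm{C}(f))\subseteq \mathrm{ran}(\mathrm{C}(g))$, i.e., $F(f,Y)\subseteq F(g,Z)$.

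For the converse, the idea is to synthesize $h$ via Gelfand duality. Since $f$ and $g$ have dense range, Proposition~\ref{propcty}(2) implies that $\mathrm{C}(f)$ and $\mathrm{C}(g)$ are injective, and hence $*$-isomorphisms onto $F(f,Y)$ and $F(g,Z)$ respectively. Assuming $F(f,Y)\subseteq F(g,Z)$, I would set
\[
\psi:=\mathrm{C}(g)^{-1}\circ \iota\circ \mathrm{C}(f):\mathrm{C}(Y)\to \mathrm{C}(Z),
\]
where $\iota:F(f,Y)\hookrightarrow F(g,Z)$ is the inclusion; this is a unital $*$-homomorphism. By Theorem~\ref{thrc}, the functor $\mathrm{C}$ is an equivalence on compact Hausdorff spaces, so $\psi=\mathrm{C}(h)$ for a unique continuous $h:Z\to Y$.

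It then remains to verify that this $h$ satisfies $hg=f$. By construction $\mathrm{C}(g)\circ \psi=\mathrm{C}(f)$, so for every $\phi\in \mathrm{C}(Y)$ we have $\phi\circ h\circ g=\phi\circ f$. Since $Y$ is compact Hausdorff, $\mathrm{C}(Y)$ separates points of $Y$, and this forces $hg=f$, completing the proof. The only mildly delicate point is ensuring the injectivity of $\mathrm{C}(f)$ and $\mathrm{C}(g)$ so that $\psi$ is well defined, which is where the density of the ranges (built into the definition of preweak compactification) is used; everything else is a formal consequence of Gelfand duality.
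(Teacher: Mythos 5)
Your proof is correct. The paper states this lemma with the proof omitted as ``straightforward,'' so there is no written argument to compare against; your Gelfand-duality argument is precisely the one the paper's machinery suggests and it is complete: the forward direction is the formal pullback computation, and in the converse you correctly invoke Proposition~\ref{propcty}(2) to see that $\mathrm{C}(f)$ and $\mathrm{C}(g)$ are injective (hence isometric $*$-isomorphisms onto their closed ranges $F(f,Y)$ and $F(g,Z)$), the fullness of $\mathrm{C}$ from Theorem~\ref{thrc} to realize $\psi$ as $\mathrm{C}(h)$ for a continuous $h\colon Z\to Y$, and the fact that $\mathrm{C}(Y)$ separates the points of the compact Hausdorff space $Y$ to upgrade $\phi\circ h\circ g=\phi\circ f$ for all $\phi$ to $hg=f$. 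You also handled the one genuinely delicate bookkeeping point, namely the orientation of the order: with the paper's convention $(f,Y)\leq (g,Z)$ calls for $h\colon Z\to Y$ with $hg=f$, which is exactly the map you construct.
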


By the above lemma,
$F : \mathbf{Comp}_{pw}(X)\to \mathbf{Sub}_{1}(\mathrm{C}_{b}(X))$
is order-preserving. Moreover, it shows that
$(f,Y)$ and $(g,Z)$ in $\mathbf{Comp}_{pw}(X)$ are equivalent
if and only if $F(f,Y)= F(g,Z)$.
Therefore, $F$ induces an order isomorphism
from $\mathcal{C}_{pw}(X)$ into $\mathbf{Sub}_{1}(\mathrm{C}_{b}(X))$ which
is again denoted by $F$.
\begin{theorem}\label{thrpre}
Let $X$ be a Tychonoff space. Then the map
$F : \mathcal{C}_{pw}(X)\to \mathbf{Sub}_{1}(\mathrm{C}_{b}(X))$ is a surjective order isomorphism. In particular,
 $\mathcal{C}_{pw}(X)$ is a complete lattice.
\end{theorem}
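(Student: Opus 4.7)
My plan is to leverage Lemma~\ref{lemord}, which already yields that the induced map $F$ on equivalence classes is an order isomorphism onto its image. So the real content of the theorem is surjectivity: every unital C$^{*}$-subalgebra $A$ of $\mathrm{C}_{b}(X)$ must be realized as $F(f,Y)$ for some preweak compactification $(f,Y)$ of $X$.

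Given such an $A$, I would construct the candidate $(f,Y)$ using the Gelfand machinery of Theorem~\ref{thrc}. Set $Y=\Phi_{A}$, which is compact Hausdorff, and define $f:X\to Y$ by $f(x)(a)=a(x)$ for $a\in A$. Continuity of $f$ follows immediately from the definition of the weak$^{*}$ topology on $Y\subseteq A^{*}$: for each $a\in A$, the map $x\mapsto f(x)(a)=a(x)$ is continuous because $a\in \mathrm{C}_{b}(X)$. The key calculation is the identity
\[
\mathrm{C}(f)\bigl(\mathcal{G}_{A}(a)\bigr)(x)=\mathcal{G}_{A}(a)(f(x))=f(x)(a)=a(x),
\]
so $\mathrm{C}(f)\circ \mathcal{G}_{A}$ equals the inclusion $\iota:A\hookrightarrow \mathrm{C}_{b}(X)$. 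Since $\mathcal{G}_{A}$ is a $*$-isomorphism onto $\mathrm{C}(Y)$ by Theorem~\ref{thrc}, this immediately gives $\mathrm{ran}(\mathrm{C}(f))=\iota(A)=A$, so $F(f,Y)=A$, and it also forces $\mathrm{C}(f)$ to be injective. Applying Proposition~\ref{propcty}(2) then yields that $f$ has dense range in $Y$, so $(f,Y)\in \mathbf{Comp}_{pw}(X)$.

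Combining this with Lemma~\ref{lemord} shows that $F$ is a surjective order isomorphism onto $\mathbf{Sub}_{1}(\mathrm{C}_{b}(X))$. For the final clause, I would simply observe that $\mathbf{Sub}_{1}(\mathrm{C}_{b}(X))$ is a complete lattice under $\subseteq$ (arbitrary intersections of unital C$^{*}$-subalgebras are again unital C$^{*}$-subalgebras, and joins exist as the C$^{*}$-subalgebra generated by the union), so its order isomorphic image $\mathcal{C}_{pw}(X)$ inherits the complete lattice structure.

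The main conceptual obstacle is convincing oneself that the Gelfand-type construction produces a \emph{preweak} compactification rather than a full compactification: we are working only with $A$, which need not separate the points of $X$, so $f$ may well fail to be injective and $X$ need not be Tychonoff in an effective sense relative to $A$. This is exactly why the weakened notion is appropriate, and the verification reduces to checking that the Gelfand transform of $A$ still controls the image of $\mathrm{C}(f)$ and its injectivity, which is handled cleanly by Proposition~\ref{propcty}(2) without requiring any separation assumption beyond what $A$ itself provides.
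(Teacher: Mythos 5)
Your proposal is correct and follows essentially the same route as the paper: reduce to surjectivity via Lemma~\ref{lemord}, take $Y=\Phi_{A}$ with $f(x)=\mathrm{ev}_{x}\upharpoonright_{A}$, identify $\mathrm{C}(f)\circ\mathcal{G}_{A}$ with the inclusion $A\hookrightarrow \mathrm{C}_{b}(X)$ so that $F(f,Y)=A$, and invoke Proposition~\ref{propcty}(2) for dense range. The only cosmetic difference is that you spell out the key computation and the complete-lattice structure of $\mathbf{Sub}_{1}(\mathrm{C}_{b}(X))$, which the paper leaves implicit.
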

\begin{proof}
By Lemma~\ref{lemord}, we need only to show that $F$ is surjective.
For each $A\in \mathbf{Sub}_{1}(\mathrm{C}_{b}(X))$, consider the ordered pair
$(f_{A}, \Phi_{A})$ where $f_{A}:X\to \Phi_{A}$ is defined by
$f_{A}(x)=\mathrm{ev}_{x}\upharpoonright_{A}$.
Since
$\Phi_{A}$ has the Gelfand topology,
$f_{A}:X\to \Phi_{A}$ is continuous.
Consider the unital $*$-homomorphism
$\varphi : \mathrm{C}(\Phi_{A})\to \mathrm{C}_{b}(X)$ defined by $\varphi(h)=\mathcal{G}^{-1}(h)$ where
$\mathcal{G}:A\to \mathrm{C}(\Phi_{A})$ is the Gelfand transformation on $A$.
We have
$\varphi=\mathrm{C}(f_{A})$. Since
$\varphi$ is injective,  $f_{A}$ has dense range, by Proposition~\ref{propcty}. This
shows that $(f_{A}, \Phi_{A})$ is in $\mathbf{Comp}_{pw}(X)$ and
$F(f_{A}, \Phi_{A})=A$.
\end{proof}

The minimal element of the lattice
 $\mathcal{C}_{pw}(X)$ is $F^{-1}(\mathbb{C})$ which is the equivalence class of
 the preweak compactification $(f, Y)$ of $X$ with $Y$ is a singleton and $f$
  the constant function.
 Also, the maximal element is $F^{-1}(\mathrm{C}_{b}(X))$ which is
 the equivalence class of $(\beta_{X},\beta X)$.

The map $F$
can be used to distinguish the compactifications from (pre)weak compactifications. The next result follows from Proposition~\ref{propcty}.

\begin{proposition}\label{prochar}
Let $X$ be a Tychonoff space and $(f,Y)$
be a preweak compactification of $X$. We have:
\begin{itemize}
\item[(1)]
 $(f,Y)$ is a weak compactification of $X$ if and only if
 $F(f,Y)$ separates the points of $X$;
\item[(2)]
$(f,Y)$ is a  compactification of $X$ if
 $F(f,Y)$ separates  points and closed sets.
 The converse is true when $X$ is locally compact.
\end{itemize}
\end{proposition}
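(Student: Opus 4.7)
The plan is to read off both statements directly from Proposition~\ref{propcty}, using the key identification $F(f,Y)=\mathrm{ran}(\mathrm{C}(f))$ built into the definition of $F$. Since $(f,Y)$ is already assumed to be a preweak compactification, the map $f:X\to Y$ is continuous with dense range by Definition~\ref{defcomp}(3). The only remaining content in the definitions of weak compactification and compactification is, respectively, injectivity and the homeomorphic-embedding condition on $f$, so everything reduces to translating those two conditions.

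For part (1), I would observe that by Definition~\ref{defcomp}(2), $(f,Y)$ is a weak compactification if and only if $f$ is injective (continuity and dense range already hold). By Proposition~\ref{propcty}(1), $f$ is injective if and only if $\mathrm{ran}(\mathrm{C}(f))$ separates the points of $X$, and since $\mathrm{C}(f)$ here is the $*$-homomorphism $\mathrm{C}(Y)\to\mathrm{C}_b(X)$ whose range is by construction $F(f,Y)$, this gives precisely the stated equivalence.

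For part (2), I would note that by Definition~\ref{defcomp}(1), $(f,Y)$ is a compactification if and only if $f$ is a homeomorphic embedding (again, dense range is already in hand). Applying Proposition~\ref{propcty}(3) to this situation, separation of points and closed sets by $\mathrm{ran}(\mathrm{C}(f))=F(f,Y)$ implies $f$ is a homeomorphic embedding, and the converse holds under the extra hypothesis of local compactness of $X$. This gives exactly the two implications of (2).

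There is no real obstacle here: the proof is essentially bookkeeping, and the only thing to be a little careful about is to point out explicitly that the preweak hypothesis already supplies the continuity and dense-range clauses so that the additional separation properties of $F(f,Y)$ are what exactly upgrades $(f,Y)$ from preweak to weak or to strong. I would therefore write the argument as a short, two-paragraph translation invoking Proposition~\ref{propcty}(1) and (3), with no further calculation needed.
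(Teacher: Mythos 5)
Your proof is correct and matches the paper's own argument: the paper proves Proposition~\ref{prochar} by exactly this reduction, noting that the result "follows from Proposition~\ref{propcty}" once one identifies $F(f,Y)$ with $\mathrm{ran}(\mathrm{C}(f))$ and observes that the preweak hypothesis already supplies continuity and dense range. Your write-up simply makes explicit the bookkeeping the paper leaves implicit.
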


Using the above proposition, one can give more examples of weak and preweak  compactifications.
Let $X$ be Tychonoff space and $A$ be a unital C$^*$-subalgebra of $\mathrm{C}_{b}(X)$.
Consider the $*$-homomorphism
$\varphi : \mathrm{C}(\Phi_{A})\to \mathrm{C}_{b}(X)$ defined by $\varphi(h)=\mathcal{G}^{-1}(h)$ where
$\mathcal{G}:A\to \mathrm{C}(\Phi_{A})$ is the Gelfand transformation on $A$. Then $(\Phi(\varphi), \Phi_{A})$
is a preweak compactification of $X$. It is a   weak compactification if and only if $A$
separates the points of $X$. If $X$ is locally compact, then $(\Phi(\varphi), \Phi_{A})$
is a compactification of $X$ if and only if $A$ separates points and closed sets
 (cf.~Proposition~\ref{prochar}).

Note that, for a Tychonoff space $X$, the equivalence classes of $(\beta_{X},\beta X)$
in $\mathcal{C}(X)$ and $\mathcal{C}_{w}(X)$ are the maximal
elements of these partially ordered sets.
For the minimal element of $\mathcal{C}(X)$ we have
the following result.

\begin{theorem}[\cite{en89}, Theorem~3.5.12]
Let $X$ be a Tychonoff space. Then $\mathcal{C}(X)$ has a minimal
element if and only if $X$ is locally compact.
If $X$ is  locally compact and noncompact,
then the equivalence class of the one-point compactification of $X$
is the minimal element of $\mathcal{C}(X)$.
\end{theorem}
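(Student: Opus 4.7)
The plan is to prove both directions separately, with the forward direction identifying the minimum as the one-point compactification as a byproduct.

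For the ``if'' direction I would assume $X$ is locally compact. The compact case is degenerate: any compactification $(f,Y)$ of a compact $X$ has $f(X)$ both compact (hence closed in $Y$) and dense, so $f(X)=Y$ and $\mathcal{C}(X)$ is a singleton. Otherwise $X$ is noncompact and I would show the one-point compactification $(\iota, X^{+})$ is the minimum. Given an arbitrary compactification $(f,Y)$, I would define $h:Y\to X^{+}$ by $h|_{f(X)} = f^{-1}$ (followed by the inclusion $X \hookrightarrow X^{+}$) and $h(y)=\infty$ for $y\in Y\setminus f(X)$, so that $h\circ f = \iota$. The crucial supporting fact, and the main obstacle in this direction, is that $f(X)$ is open in $Y$: each $x\in X$ has a compact neighborhood $K$, so $f(K)$ is closed in $Y$, and a density argument shows that any open neighborhood of $f(x)$ in $Y$ contained in $f(K)$ must actually lie in $f(X)$. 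Continuity of $h$ then reduces to the two kinds of basic opens of $X^{+}$: opens of $X$ pull back to opens of $f(X)\subseteq Y$, and the cofinite opens $X^{+}\setminus K$ pull back to $Y\setminus f(K)$, which is open.

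For the ``only if'' direction, let $(f_{0},Y_{0})$ be the minimum of $\mathcal{C}(X)$ and set $R := Y_{0} \setminus f_{0}(X)$; I would show $|R|\le 1$. Suppose otherwise that $y_{1} \neq y_{2}$ both lie in $R$ and form the quotient $Y_{0}' := Y_{0}/\{y_{1},y_{2}\}$, collapsing the pair to a single point $\ast$. Since $Y_{0}$ is compact Hausdorff (so normal) and $\{y_{1},y_{2}\}$ is closed, the quotient map $q:Y_{0} \to Y_{0}'$ is closed and $Y_{0}'$ is compact Hausdorff; moreover $q$ restricts to a homeomorphism on the open set $Y_{0} \setminus \{y_{1},y_{2}\} \supseteq f_{0}(X)$, so $(q\circ f_{0}, Y_{0}')$ is again a compactification of $X$ (the secondary technical point to verify). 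By minimality there is a continuous map $h : Y_{0}' \to Y_{0}$ with $h \circ q \circ f_{0} = f_{0}$; density of $f_{0}(X)$ in $Y_{0}$ and Hausdorffness of $Y_{0}$ then force $h \circ q = \mathrm{id}_{Y_{0}}$, contradicting $q(y_{1}) = q(y_{2})$. Hence $|R|\le 1$: if $R = \emptyset$ then $X$ is compact, and if $R$ is a singleton $\{\infty\}$ then $f_{0}(X)$ is open in $Y_{0}$ as the complement of a closed point, so $X$ is locally compact. In the noncompact case this simultaneously identifies $(f_{0},Y_{0})$ with the one-point compactification.
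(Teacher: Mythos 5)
The paper does not prove this statement --- it is quoted directly from Engelking (Theorem~3.5.12) as a known result --- so there is no internal proof to compare against. Your argument is correct and complete in both directions, and it is essentially the standard textbook proof: the key points (openness of $f(X)$ in $Y$ via a compact neighborhood plus density, continuity of the collapse map onto $X^{+}$, and the two-point-remainder quotient forcing $h\circ q=\mathrm{id}_{Y_0}$ by density and Hausdorffness) are all handled properly.
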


We  need the following
lemma  in the proof of Proposition~\ref{propcomp}; it
guarantees the existence of
a minimal element in $\mathcal{C}(X)$. The proof of one direction is straightforward. The other direction follows from the above proposition.  
\begin{lemma}\label{lemmin}
Let $(f,Y)$ be a preweak compactificaion of a locally compact space $X$. Then
$(f,Y)$ is a compactification of $X$ if and only if
 $\mathrm{C}_{0}(X)\subseteq F_{X}(f,Y)\subseteq \mathrm{C}_{b}(X)$.
\end{lemma}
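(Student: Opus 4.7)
The statement splits into two inclusions to prove in one direction and a separation property to establish in the other. Note that the containment $F_X(f,Y)\subseteq \mathrm{C}_b(X)$ is automatic from the very definition of $F_X$, so only the inclusion $\mathrm{C}_0(X)\subseteq F_X(f,Y)$ has real content.

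For the backward direction ($\Leftarrow$), the plan is to invoke Proposition~\ref{prochar}(2). Since $X$ is locally compact Hausdorff, the algebra $\mathrm{C}_0(X)$ separates points and closed sets: given a closed set $E\subseteq X$ and $x\in X\setminus E$, the locally compact version of Urysohn's lemma produces $\phi\in \mathrm{C}_c(X)\subseteq \mathrm{C}_0(X)$ with $\phi(x)=1$ and $\phi|_E=0$, so $\phi(x)\notin\overline{\phi(E)}$. Therefore any algebra containing $\mathrm{C}_0(X)$, in particular $F_X(f,Y)$, separates points and closed sets, and Proposition~\ref{prochar}(2) (together with the standing hypothesis that $X$ is locally compact) yields that $(f,Y)$ is a compactification.

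For the forward direction ($\Rightarrow$), assume $f:X\to Y$ is a homeomorphic embedding with dense range. Identifying $X$ with $f(X)$, I first use the classical fact that a locally compact Hausdorff subspace that is dense in a compact Hausdorff space is open; hence $f(X)$ is open in $Y$. Now, given $\phi\in \mathrm{C}_0(X)$, I would define $g:Y\to\mathbb{C}$ by $g(f(x))=\phi(x)$ for $x\in X$ and $g(y)=0$ for $y\in Y\setminus f(X)$. Then $g\circ f=\phi$, so the only issue is continuity of $g$. Continuity on the open set $f(X)$ is immediate since $f$ is a homeomorphism onto $f(X)$.

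The main obstacle is the continuity of $g$ at the boundary points $y_0\in Y\setminus f(X)$. Here the hypothesis $\phi\in \mathrm{C}_0(X)$ is exactly what is needed: given $\varepsilon>0$, pick a compact $K\subseteq X$ with $|\phi|<\varepsilon$ off $K$. Then $f(K)$ is compact, hence closed in $Y$, and $y_0\notin f(K)$, so $U:=Y\setminus f(K)$ is an open neighborhood of $y_0$ on which $|g|<\varepsilon$ (the points of $U\cap f(X)$ lie outside $K$ under the identification, while $g$ vanishes on $U\setminus f(X)$). This shows $g\in \mathrm{C}(Y)$, so $\phi=\mathrm{C}(f)(g)\in F_X(f,Y)$, completing the proof.
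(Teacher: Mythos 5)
Your proof is correct, but the forward direction takes a genuinely different route from the paper's. The paper disposes of the lemma in two sentences: the backward direction (which you prove the same way, via the separation property of $\mathrm{C}_{0}(X)$ and Proposition~\ref{prochar}(2)) is called straightforward, while the forward direction is deduced from Engelking's theorem that the one-point compactification is the minimal element of $\mathcal{C}(X)$ --- i.e., any compactification dominates the one-point compactification, so by the order-preservation of $F$ (Lemma~\ref{lemord}) its image contains $F$ of the one-point compactification, which is $\mathrm{C}_{0}(X)+\mathbb{C}1\supseteq\mathrm{C}_{0}(X)$. You instead prove the forward inclusion directly: $f(X)$ is open in $Y$ because a dense locally compact subspace of a Hausdorff space is open, and then every $\phi\in\mathrm{C}_{0}(X)$ extends by zero to a continuous function on $Y$, with continuity at boundary points supplied exactly by the vanishing-at-infinity condition. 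Both arguments are sound; yours is self-contained and arguably preferable in context, since the paper subsequently wants to use this lemma (together with Theorem~\ref{thrpre} and Proposition~\ref{prochar}) to give an \emph{algebraic} proof of Proposition~\ref{propcomp}, and grounding the lemma in the very topological theorem one is trying to reprove leaves that claim of independence slightly circular. The price you pay is having to invoke two classical topological facts (openness of dense locally compact subspaces, and the locally compact Urysohn lemma), but these are elementary and external to the compactification theory being developed.
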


The following result is part of the literature
\cite[Theorems~3.5.9 and 3.5.12]{en89}, proved using topological methods.
Using Theorem~\ref{thrpre}, Proposition~\ref{prochar}(2) and 
Lemma~\ref{lemmin} one could give an algebraic proof.

\begin{proposition}\label{propcomp}
Let $X$ be a locally compact Hausdorff space.
Then the map $F$ is an order isomorphism from
$\mathcal{C}(X)$ onto the set of all unital C$^*$-subalgebras of
$\mathbf{Sub}_{1}(\mathrm{C}_{b}(X))$  which
separate  points and closed sets.
In particular, $\mathcal{C}(X)$ is a complete lattice.
\end{proposition}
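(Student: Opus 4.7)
The plan is to transport Theorem~\ref{thrpre} to the subclass of compactifications, using Proposition~\ref{prochar}(2) and Lemma~\ref{lemmin} to pin down both the image of $F$ and its lattice structure.

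First, since $\mathcal{C}(X)\subseteq \mathcal{C}_{pw}(X)$ and Theorem~\ref{thrpre} already supplies an order isomorphism $F : \mathcal{C}_{pw}(X)\to \mathbf{Sub}_{1}(\mathrm{C}_{b}(X))$, the restriction of $F$ to $\mathcal{C}(X)$ is automatically injective and order-preserving. I would then identify its image: by Proposition~\ref{prochar}(2), a preweak compactification $(f,Y)$ of the locally compact Hausdorff space $X$ is a compactification precisely when $F(f,Y)$ separates points and closed sets, so $F(\mathcal{C}(X))$ coincides with the set $\mathcal{S}$ of unital C$^{*}$-subalgebras of $\mathrm{C}_{b}(X)$ that separate points and closed sets.

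For surjectivity of $F\upharpoonright \mathcal{C}(X)$ onto $\mathcal{S}$, I would take $A\in \mathcal{S}$ and invoke the construction in the proof of Theorem~\ref{thrpre} to produce the preweak compactification $(f_{A},\Phi_{A})$ with $F(f_{A},\Phi_{A})=A$; because $A$ separates points and closed sets, Proposition~\ref{prochar}(2) upgrades $(f_{A},\Phi_{A})$ to an honest compactification, placing it in $\mathcal{C}(X)$.

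For the concluding lattice statement, I would use Lemma~\ref{lemmin} to rewrite $\mathcal{S}$ as the order filter $\{A\in \mathbf{Sub}_{1}(\mathrm{C}_{b}(X)) : \mathrm{C}_{0}(X)\subseteq A\}$. Arbitrary intersections of such subalgebras still contain $\mathrm{C}_{0}(X)$, and the C$^{*}$-subalgebra generated by any union of members of $\mathcal{S}$ likewise contains $\mathrm{C}_{0}(X)$; hence $\mathcal{S}$ is a complete sublattice of $\mathbf{Sub}_{1}(\mathrm{C}_{b}(X))$, and, transported through the order isomorphism $F$, this makes $\mathcal{C}(X)$ a complete lattice. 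There is no genuine obstacle in this argument; it is essentially bookkeeping on top of the three cited results. The one mild subtlety worth flagging in writing is that $\mathrm{C}_{0}(X)$ itself is not unital when $X$ is noncompact, so the inclusion $\mathrm{C}_{0}(X)\subseteq A$ is to be read set-theoretically inside the unital algebra $A$, which automatically carries the unit $1\in \mathrm{C}_{b}(X)$.
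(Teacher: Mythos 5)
Your argument is correct and follows exactly the route the paper itself indicates (it gives no written-out proof, only the remark that an algebraic proof follows from Theorem~\ref{thrpre}, Proposition~\ref{prochar}(2) and Lemma~\ref{lemmin}): restrict the order isomorphism of Theorem~\ref{thrpre}, identify the image via Proposition~\ref{prochar}(2), and use Lemma~\ref{lemmin} to see the image is the order filter above $\mathrm{C}_{0}(X)$, hence a complete lattice. Nothing is missing; your parenthetical about the non-unitality of $\mathrm{C}_{0}(X)$ is a reasonable clarification.
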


One may guess that for a locally compact, Hausdorff, and noncompact space $X$,
the one-point compactification of $X$ is the minimum element of
$\mathcal{C}_{w}(X)$. But, this is not the case.

\begin{lemma}\label{lemnomin}
Let $X$ be a locally compact, Hausdorff, and noncompact space.
Then $\mathcal{C}_{w}(X)$ has no minimum element.
\end{lemma}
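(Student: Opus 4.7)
The plan is to pass to the algebraic side and derive a contradiction. By Theorem~\ref{thrpre} together with Proposition~\ref{prochar}(1), $F$ restricts to an order isomorphism between $\mathcal{C}_w(X)$ and the collection of unital C$^*$-subalgebras of $\mathrm{C}_b(X)$ that separate the points of $X$, so a minimum element of $\mathcal{C}_w(X)$ would correspond to a smallest such subalgebra $A_0 \subseteq \mathrm{C}_b(X)$. My strategy is to exhibit a large family of weak compactifications whose corresponding subalgebras intersect to something too small to separate points.

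For each $x \in X$, I would use the weak compactification $(\mathrm{id}, X_x)$ discussed immediately after Theorem~\ref{thrwcom}; setting $A_x := F(\mathrm{id}, X_x)$ gives a unital C$^*$-subalgebra of $\mathrm{C}_b(X)$ that separates points, so $A_0 \subseteq A_x$. Unwinding the neighborhood base at the point at infinity in $X_x$, a bounded continuous $g$ on $X$ should lie in $A_x$ exactly when $g - g(x) \cdot 1 \in \mathrm{C}_0(X)$; equivalently,
\[
A_x = \mathbb{C} \cdot 1 + I_x, \qquad I_x := \{h \in \mathrm{C}_0(X) : h(x) = 0\}.
\]

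The key step will be to show $\bigcap_{x \in X} A_x = \mathbb{C} \cdot 1$. If $g$ lies in this intersection, then for any $x, y \in X$ both $g - g(x) \cdot 1$ and $g - g(y) \cdot 1$ belong to $\mathrm{C}_0(X)$, hence so does the constant function $(g(y) - g(x)) \cdot 1$; since $X$ is noncompact, $\mathrm{C}_0(X)$ contains no nonzero constant, forcing $g(x) = g(y)$. Combining these facts yields $A_0 \subseteq \mathbb{C} \cdot 1$, and unitality forces $A_0 = \mathbb{C} \cdot 1$. Since $X$ has at least two points (being a noncompact Hausdorff space), $\mathbb{C} \cdot 1$ cannot separate them, contradicting the assumption that $A_0$ came from a weak compactification. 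I expect the only mildly delicate point to be the explicit computation of $A_x$ in terms of the ideal $I_x$; everything else is a short manipulation.
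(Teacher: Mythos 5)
Your proof is correct, but it takes a genuinely different route from the paper's. The paper stays entirely on the topological side: assuming a minimum $(f,Y)$ of $\mathcal{C}_{w}(X)$ exists, it compares it with just the two weak compactifications $(\mathrm{id},X_{x})$ and $(\mathrm{id},X_{y})$ for a fixed pair $x\neq y$, notes that the mediating map forced by $(f,Y)\leq(\mathrm{id},X_{x})$ must be $f$ itself and hence a homeomorphism $X_{x}\to Y$ (a continuous injection with dense range from a compact space to a Hausdorff space), concludes that $\mathrm{id}:X_{x}\to X_{y}$ is a homeomorphism, and then uses local compactness to write $X=(X\setminus U)\cup\overline{U}$ as a union of two compact sets, contradicting noncompactness. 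You instead transport everything through the order isomorphism $F$ and compute $F(\mathrm{id},X_{x})=\mathbb{C}1+I_{x}$; that identification is right (for $\mathbb{C}1+I_{x}\subseteq A_{x}$ one shrinks the compact set witnessing $h\in\mathrm{C}_{0}(X)$ away from $x$ using continuity of $h$ at $x$, and for the reverse inclusion one uses that a compact subset of $X\setminus\{x\}$ is compact in $X$), and the rest is as routine as you predict. Two remarks. First, there is no circularity: the restriction of $F$ to an order isomorphism onto the point-separating subalgebras needs only Theorem~\ref{thrpre} and Proposition~\ref{prochar}(1), while Lemma~\ref{lemnomin} is used only afterwards for the minimum-element clause of Proposition~\ref{propweak}. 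Second, you do not need the intersection over all of $X$: already $A_{x}\cap A_{y}=\mathbb{C}1+(I_{x}\cap I_{y})$ fails to separate $x$ from $y$, which exactly mirrors the paper's use of only two points. What your approach buys is a proof that never leaves the function-algebra picture, in the spirit the authors advertise; what the paper's buys is independence from the Gelfand machinery and a more elementary, purely topological contradiction.
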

\begin{proof}
Suppose that $\mathcal{C}_{w}(X)$ has a minimum  element
$(f,Y)$.
Choose $x,y\in X$ with $x\neq y$. Consider
the weak compactifications $(\mathrm{id}, X_{x})$ and
$(\mathrm{id}, X_{y})$ of $X$ as described above.
Since $(f,Y)\leq (\mathrm{id}, X_{x})$, there is an injective continuous map
$h:X_{x}\to Y$
with dense range
 such that $h\circ \mathrm{id}_{X}=f$. Thus $h=f$ and
$f:X_{x}\to Y$ is a homeomorphism. Similarly, $f:X_{y}\to Y$ is  a homeomorphism.
Hence $\mathrm{id}:X_{x}\to X_{y}$ is a homeomorphism.

Since $X$ is locally compact and Hausdorff, there is an open set $U$
with compact closure  in $X$ such that
$x\in U\not\ni y$. Then $X_{y}\setminus U$ is compact in $X_{y}$
and so in $X_{x}$. Since $X_{y}\setminus U\subseteq X\setminus \{x\}$,
$X\setminus U$ is also compact in $X$. Finally, $X=(X\setminus U) \cup\overline{U}$ is compact.
\end{proof}

The next result follows from Theorem~\ref{thrpre}, Proposition~\ref{prochar},  and
Lemma~\ref{lemnomin}.
\begin{proposition}\label{propweak}
Let $X$ be a locally compact Hausdorff space.
Then the map $F$ is an order isomorphism from
$\mathcal{C}_{w}(X)$ onto the set of all unital C$^*$-subalgebras of
$\mathbf{Sub}_{1}(\mathrm{C}_{b}(X))$  which
separate   the points of $X$.
In particular, $\mathcal{C}_{w}(X)$ has the least upper bound property.
Moreover, it has a minimum element if and only if $X$ is compact.
\end{proposition}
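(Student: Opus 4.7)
The plan is to deduce the first assertion directly from Theorem~\ref{thrpre} and Proposition~\ref{prochar}(1). By Theorem~\ref{thrpre}, the map $F\colon \mathcal{C}_{pw}(X)\to \mathbf{Sub}_{1}(\mathrm{C}_{b}(X))$ is a surjective order isomorphism; by Proposition~\ref{prochar}(1), a preweak compactification $(f,Y)$ is in fact a weak compactification precisely when $F(f,Y)$ separates the points of $X$. Restricting $F$ to $\mathcal{C}_{w}(X)$ therefore yields an order isomorphism onto the collection of unital C$^{*}$-subalgebras of $\mathrm{C}_{b}(X)$ that separate the points of $X$.

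For the least upper bound property, I would take any family $\{A_i\}_{i\in I}$ of such subalgebras and let $A$ be the closed unital $*$-subalgebra generated by $\bigcup_{i}A_{i}$; this is the supremum of the $A_{i}$ in the complete lattice $\mathbf{Sub}_{1}(\mathrm{C}_{b}(X))$. Since $A_{i}\subseteq A$ for every $i$ and each $A_{i}$ separates points, the larger algebra $A$ separates points as well. Hence $A$ lies in the image of $F\upharpoonright_{\mathcal{C}_w(X)}$, and its $F$-preimage is the least upper bound in $\mathcal{C}_{w}(X)$ of the family corresponding to $\{A_{i}\}$.

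For the last assertion, the noncompact direction is exactly Lemma~\ref{lemnomin}. For the converse, if $X$ is compact then any weak compactification $(f,Y)$ has $f(X)$ simultaneously dense and compact in the Hausdorff space $Y$, so $f(X)=Y$; then $f$ is a continuous bijection from a compact space to a Hausdorff space, hence a homeomorphism, and consequently $(f,Y)$ is equivalent to $(\mathrm{id},X)$. Thus $\mathcal{C}_{w}(X)$ is a singleton and trivially has a minimum.

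I do not anticipate a serious obstacle: the bulk of the work has already been done in Theorem~\ref{thrpre}, Proposition~\ref{prochar}, and Lemma~\ref{lemnomin}. The only subtlety worth checking carefully is that the supremum operation in $\mathbf{Sub}_{1}(\mathrm{C}_{b}(X))$ preserves point separation, but this is immediate from containment, so no genuinely new idea is required beyond the formalism set up earlier in the section.
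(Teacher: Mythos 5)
Your proposal is correct and follows exactly the route the paper intends: the paper's entire ``proof'' is the single sentence that the result follows from Theorem~\ref{thrpre}, Proposition~\ref{prochar}, and Lemma~\ref{lemnomin}, and you have simply filled in those deductions (including the easy compact case, which the paper notes immediately after the proposition). No discrepancies to report.
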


If $X$ is a compact Hausdorff space, then
$\mathcal{C}_{w}(X)$ has only one element, that is, the equivalence class of
$(\mathrm{id},X)$. Moreover, $\mathcal{C}_{pw}(X)$ is
the set of all equivalence classes of Hausdorff quotients of $X$, i.e.,
$(\pi_{R}, {X}/{R})$ where $R$ is an equivalence relation on $X$
such that ${X}/{R}$ is Hausdorff and
$\pi_{R}:X\to {X}/{R}$ is the quotient map. Note that, since   $X$
 is compact so is ${X}/{R}$, and ${X}/{R}$ is Hausdorff
if and only if $R$ is closed in $X\times X$.

The following proposition identifies the position of $\mathcal{C}(X)$ and
$\mathcal{C}_{w}(X)$ in $\mathcal{C}_{pw}(X)$ from lattice theory point of view (see
\cite{bs81} for definition of a filter).

\begin{proposition}
Let $X$ be a Tychonoff space and
$(f,Y)$ and $(g,Z)$ be a pair of preweak compactifications of
$X$ such that $(f,Y)\geq (g,Z)$. Then:
\begin{itemize}
\item[(1)]
if  $(g,Z)$ is weak compactification of $X$, then so is $(f,Y)$;
\item[(2)]
if  $(g,Z)$ is  compactification of $X$, then so is $(f,Y)$.
\end{itemize}
Moreover, $\mathcal{C}(X)$ is a filter of $\mathcal{C}_{pw}(X)$, and
$\mathcal{C}_{w}(X)$ is not a filter of $\mathcal{C}_{pw}(X)$ unless
$X$ is  compact.
\end{proposition}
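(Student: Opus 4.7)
The plan is to exploit the continuous map $h\colon Y\to Z$ with $hf=g$ supplied by the definition of $(f,Y)\geq(g,Z)$, and transfer the injectivity and embedding properties of $g$ back to $f$. For (1), if $g$ is injective then so is $f$, since $f(x)=f(x')$ yields $g(x)=h(f(x))=h(f(x'))=g(x')$ and hence $x=x'$; combined with $f$ being continuous with dense range (it is preweak by hypothesis), this gives that $(f,Y)$ is weak. For (2), once $f$ is injective by (1), I would verify that $f$ is open onto its image: given $U\subseteq X$ open, write $g(U)=V\cap g(X)$ with $V$ open in $Z$; then $h^{-1}(V)$ is open in $Y$, and a direct computation using $hf=g$ and the injectivity of $g$ gives $h^{-1}(V)\cap f(X)=f(U)$, so $f(U)$ is open in $f(X)$ and $f$ becomes a homeomorphic embedding.

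For the filter claim on $\mathcal{C}(X)$, upward closure in $\mathcal{C}_{pw}(X)$ is immediate from (2), and nonemptiness is witnessed by $\beta X$. The substantive step is closure under binary meets. Via the order isomorphism $F\colon\mathcal{C}_{pw}(X)\to\mathbf{Sub}_{1}(\mathrm{C}_{b}(X))$ of Theorem~\ref{thrpre}, the meet of two elements corresponds to the intersection of the associated unital C$^{*}$-subalgebras, so by Proposition~\ref{prochar}(2) it suffices to show that this intersection separates points and closed sets of $X$. In the locally compact case this is immediate from Lemma~\ref{lemmin}: every compactification-subalgebra contains $\mathrm{C}_{0}(X)$, hence so does the intersection, which Lemma~\ref{lemmin} then identifies as a compactification. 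Extending this intersection argument beyond the locally compact setting is the main technical obstacle, since neither sums nor products of functions from $A_{1}$ and $A_{2}$ lie in $A_{1}\cap A_{2}$ in general; a natural line is to work inside the join compactification $\overline{(f_{1},f_{2})(X)}\subseteq Y_{1}\times Y_{2}$ and use Urysohn functions to produce separating elements of $A_{1}\cap A_{2}$ directly.

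Finally, $\mathcal{C}_{w}(X)$ is upward closed by (1), so any failure to be a filter must come from meets. If $X$ is compact then $\mathcal{C}_{w}(X)$ has a single equivalence class and is trivially a filter. Assuming $X$ is locally compact and noncompact, I would pick distinct $x,y\in X$ and take the weak compactifications $(\mathrm{id},X_{x})$ and $(\mathrm{id},X_{y})$. Were their meet $(m,M)$ in $\mathcal{C}_{pw}(X)$ itself a weak compactification, the argument of Lemma~\ref{lemnomin} would give continuous maps $X_{x}\to M$ and $X_{y}\to M$ restricting to $m$ on $X$, and would force $M$ to be homeomorphic to both $X_{x}$ and $X_{y}$; this ultimately yields that $X$ is compact, contradicting our assumption. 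Hence $\mathcal{C}_{w}(X)$ is not closed under meets and is not a filter.
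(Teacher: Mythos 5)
Your parts (1) and (2) are correct and follow the paper's route: the paper derives injectivity of $f$ from $hf=g$ exactly as you do and dismisses (2) as ``similar''; your explicit verification that $f$ is open onto its image via $f(U)=h^{-1}(V)\cap f(X)$ is the right way to fill in that ``similar''. The final claim about $\mathcal{C}_{w}(X)$ is also handled as in the paper, by re-running the $X_{x}$, $X_{y}$ argument of Lemma~\ref{lemnomin}; note only that the filter definition requires ruling out \emph{every} weak compactification lying below both $(\mathrm{id},X_{x})$ and $(\mathrm{id},X_{y})$, not merely their $\mathcal{C}_{pw}$-meet, but the identical argument does this, so nothing is lost.

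The one genuine issue is the claim that $\mathcal{C}(X)$ is closed under binary meets in $\mathcal{C}_{pw}(X)$. In the locally compact case you actually do better than the paper: the paper simply asserts that $\mathcal{C}(X)$ is a sublattice of $\mathcal{C}_{pw}(X)$, whereas your chain --- meets correspond under the order isomorphism $F$ to intersections of subalgebras, each compactification subalgebra contains $\mathrm{C}_{0}(X)$ by Lemma~\ref{lemmin}, hence so does the intersection, hence the meet is a compactification by Lemma~\ref{lemmin} again --- is a complete proof for locally compact $X$. But the proposition is stated for arbitrary Tychonoff $X$, and the step you flag as ``the main technical obstacle'' is a real gap, not a routine one: a function separating a point from a closed set in the join compactification $\overline{(f_{1},f_{2})(X)}\subseteq Y_{1}\times Y_{2}$ must factor through both coordinate quotients in order to lie in $A_{1}\cap A_{2}$, and there is no reason such a function should exist; indeed the poset of compactifications of a non--locally compact Tychonoff space need not be closed under meets (it need not even be a lattice), so the Urysohn-function strategy cannot succeed in general. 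As it stands your argument establishes the ``moreover'' claim only for locally compact $X$ --- though you should be aware that the paper's own one-line justification does not cover the general Tychonoff case either.
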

\begin{proof}
Since $(f,Y)\geq (g,Z)$, there is a continuous map
$h:Y\to Z$ such that $hf=g$. If
 $(g,Z)$ is weak compactification of $X$, then $g$ is one-to-one, and
 so is $f$. Thus $(f,Y)$ is also a weak compactification of $X$. This proves part (1).
 The proof of part (2) is similar.

 That $\mathcal{C}(X)$ is a filter of $\mathcal{C}_{pw}(X)$
 follows from  part (2) and the fact that $\mathcal{C}(X)$ is a sublattice of $\mathcal{C}_{pw}(X)$.
Let $X$ be a locally compact, Hausdorff, and noncompact space. Choose $x,y\in X$
 with $x\neq y$. The proof of Lemma~\ref{lemnomin} shows that
 there is no $(f,Y)$ in $\mathcal{C}_{w}(X)$ such that
 $(f,Y)\leq (\mathrm{id}, X_{x})$ and $(f,Y)\leq (\mathrm{id}, X_{y})$.
 Therefore, $\mathcal{C}_{w}(X)$ is not a filter of $\mathcal{C}_{pw}(X)$.
\end{proof}

\section{Compactification Functor}
\noindent
In this section we define a (weak, preweak) compactification functor.
The most important example is the Stone-\v{C}ech compactification functor.
We introduce an algebraic machinery
which enables us to give more examples. A similar approach is given in Section~6 for discretization.
\begin{definition}\label{defcofu}
A  functor
$F: \mathcal{C}\to  \mathbf{Top}_{\mathrm{cpt,2}}$ where
$\mathcal{C}$ is a subcategory of  $\mathbf{Top}_{3\frac{1}{2}}$
is called a \emph{preweak compactification functor}
if
there is a correspondence $\tau$ which assigns to each $X$ in $\mathcal{C}$
 a continuous map $\tau_{X}:X\to F(X)$ with dense range  such that for each morphism
$f:X\to Y$ in $\mathcal{C}$
the following diagram commutes:
\[
\xymatrix{
X \ar[d]_{f}\ar[r]^{\tau_{X}} & F(X) \ar[d]^{F(f)} \\
Y\ar[r]_{\tau_{Y}}  & F(Y)\ .
}
\]

A preweak compactification functor
$F$ is called a \emph{weak compactification functor} (respectively, \textit{compactification functor})
if $\tau_{X}$ is  injective (respectively, homeomorphic embedding).
\end{definition}
When the domain category is the whole $\mathbf{Top}_{3\frac{1}{2}}$,
the Stone-\v{C}ech compactification functor
$\beta : \mathbf{Top}_{3\frac{1}{2}}\to \mathbf{Top}_{\mathrm{cpt,2}}$
 is the only weak compactification functor, as shown in the following theorem.
 
\begin{theorem}
Let $F: \mathbf{Top}_{3\frac{1}{2}}\to \mathbf{Top}_{\mathrm{cpt,2}}$
be a weak compactification functor. Then
$F$ is naturally isomorphic to the Stone-\v{C}ech compactification functor.
\end{theorem}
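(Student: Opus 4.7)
The plan is to construct, for each Tychonoff space $X$, a natural homeomorphism between $F(X)$ and $\beta X$ that intertwines $\tau_X$ with $\beta_X$. The key ingredients will be: first, that the hypotheses force $F$ to be essentially the identity on compact Hausdorff objects through $\tau$; and second, the universal property of $\beta$ together with the fact that equality of continuous maps into a Hausdorff space can be tested on a dense subset.

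The first step is to observe that for every compact Hausdorff $K$ the component $\tau_K:K\to F(K)$ is a homeomorphism. Indeed, $\tau_K$ is a continuous injection with dense range; since $K$ is compact, $\tau_K(K)$ is compact and therefore closed in the Hausdorff space $F(K)$, so density forces surjectivity, and a continuous bijection from a compact space to a Hausdorff space is automatically a homeomorphism. In particular $\tau_{\beta X}:\beta X\to F(\beta X)$ is a homeomorphism for every Tychonoff $X$.

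For an arbitrary Tychonoff $X$, I would then build two candidate maps in opposite directions between $F(X)$ and $\beta X$. The universal property of $\beta$ applied to the continuous map $\tau_X:X\to F(X)$ into a compact Hausdorff target produces a unique continuous $\psi_X:\beta X\to F(X)$ with $\psi_X\circ\beta_X=\tau_X$. In the other direction, applying $F$ to the Tychonoff morphism $\beta_X:X\to\beta X$ and invoking naturality of $\tau$ gives $F(\beta_X)\circ\tau_X=\tau_{\beta X}\circ\beta_X$, so I set $\phi_X:=\tau_{\beta X}^{-1}\circ F(\beta_X):F(X)\to\beta X$, which satisfies $\phi_X\circ\tau_X=\beta_X$. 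Then $(\phi_X\circ\psi_X)\circ\beta_X=\phi_X\circ\tau_X=\beta_X$, and since $\beta_X(X)$ is dense in $\beta X$ and the target is Hausdorff, we conclude $\phi_X\circ\psi_X=\mathrm{id}_{\beta X}$. Symmetrically, $(\psi_X\circ\phi_X)\circ\tau_X=\psi_X\circ\beta_X=\tau_X$, and density of $\tau_X(X)$ in $F(X)$ (which is part of the definition of a weak compactification functor) yields $\psi_X\circ\phi_X=\mathrm{id}_{F(X)}$.

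It remains to check naturality of the family $\phi=\{\phi_X\}$. Given a morphism $f:X\to Y$ in $\mathbf{Top}_{3\frac{1}{2}}$, one computes on the dense subset $\tau_X(X)\subseteq F(X)$:
\[
\phi_Y\circ F(f)\circ\tau_X=\phi_Y\circ\tau_Y\circ f=\beta_Y\circ f=\beta(f)\circ\beta_X=\beta(f)\circ\phi_X\circ\tau_X,
\]
using naturality of $\tau$, the identity $\phi\circ\tau=\beta$, and naturality of $\beta$. Density then forces $\phi_Y\circ F(f)=\beta(f)\circ\phi_X$ on all of $F(X)$. I expect no serious obstacle; the only conceptual subtlety is the first step, where recognizing that the weak compactification axioms collapse $F$ to the identity on $\mathbf{Top}_{\mathrm{cpt,2}}$ via $\tau$ is what unlocks the use of the universal property of $\beta$, after which the rest is forced by standard density arguments.
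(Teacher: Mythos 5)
Your proof is correct and follows essentially the same route as the paper's: both extend $\tau_X$ along $\beta_X$ via the universal property of $\beta$, exploit that $\tau_{\beta X}$ is a homeomorphism (a continuous injection with dense range out of a compact space into a compact Hausdorff one), and finish with density arguments for both the isomorphism and naturality. The only cosmetic difference is that you exhibit an explicit two-sided inverse $\tau_{\beta X}^{-1}\circ F(\beta_X)$, whereas the paper deduces that the extension $\sigma_X:\beta X\to F(X)$ is a continuous bijection of compact Hausdorff spaces and hence a homeomorphism.
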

\begin{proof}
Let $\tau$ be the correspondence associated to $F$ as in Definition~\ref{defcofu}.
Let $X$ be a Tychonoff space.
Since $\tau_{X}:  X\to F( X)$ is continuous with dense range,
there is a unique continuous map
$\sigma_{X}:\beta X\to F(X)$ such that $\sigma_{X}\beta_{X}=\tau_{X}$.
It is obvious that $\sigma_{X}$ is onto. We claim that $\sigma_{X}$ is injective.
In fact, first note that the following diagram commutes:
\[
\xymatrix{
X \ar[d]_{\beta_{X}}\ar[r]^{\tau_{X}} & F(X) \ar[d]^{F(\beta_{X})} \\
\beta X\ar[r]_{\tau_{\beta X}}  & F(\beta X)\ .
}
\]
Hence  $F(\beta_{X})\sigma_{X}\beta_{X}=F(\beta_{X})\tau_{X}=\tau_{\beta_{X}}\beta_{X}$.
Since $\beta_{X}$ has dense range,
$F(\beta_{X})\sigma_{X}=\tau_{\beta_{X}}$.
On the other hand, $\tau_{\beta X}:\beta X\to   F(\beta X)$ is a homeomorphism.
Thus $\sigma_{X}$ is injective and the claim is proved.

Now let
$f:X\to Y$ be a morphism in $ \mathbf{Top}_{3\frac{1}{2}}$.
We have
\[
F(f)\sigma_{X}\beta_{X}=F(f)\tau_{X}=\tau_{Y}f=
\sigma_{Y}\beta_{Y}f=\sigma_{Y}(\beta f)\beta_{X}.
\]
Thus $F(f)\sigma_{X}=\sigma_{Y}\beta f$ and the following diagram commutes:
\[
\xymatrix{
\beta X \ar[d]_{\beta f}\ar[r]^{\sigma_{X}} & F(X) \ar[d]^{F(f)} \\
\beta Y\ar[r]_{\sigma_{Y}}  & F(Y)\ .
}
\]
This shows that $\sigma:\beta\cong F$, i.e., $F$ is naturally isomorphic to $\beta$.
\end{proof}

Let $\mathcal{C}$ be the category of non-compact locally compact Hausdorff spaces with
proper maps
(continuous maps which lift back compact sets to compact sets) as morphisms.
Let $A: \mathcal{C}\to \mathbf{Top}_{\mathrm{cpt,2}}$ be the one-point compactification
functor. The functor $A$ acts on morphisms by extension.
Let each $\tau_{X}:X\hookrightarrow A(X)$ be the injection map.
It is easy to see that
$A: \mathcal{C}\to \mathbf{Top}_{\mathrm{cpt,2}}$ is a compactification functor.

As another example, let $\mathbf{TG}_{\mathrm{lc,2}}$ and $\mathbf{TG}_{\mathrm{cpt,2}}$
denote the categories of locally compact Hausdorff  groups and
compact Housdorff groups, respectively, both with
continuous homomorphisms. Let
$\mathrm{b}: \mathbf{TG}_{\mathrm{lc,2}}\to\mathbf{TG}_{\mathrm{cpt,2}}$
be the  Bohr compactification functor \cite{fo95}. It is
easy to see that $\mathrm{b}$ is a weak compactification functor.

We shall give the structure of all (preweak, weak) compactification functors
from the function algebra point of view
using the following
observation
(see Proposition~\ref{propredu} below).

Let $\mathcal{C}$ be a subcategory of  $\mathbf{Top}_{3\frac{1}{2}}$ and
$G:\mathcal{C}\to \mathbf{C^{*}com}_{1}$ be a contravariant functor such that
for each $X\in \mathcal{C}$, $G(X)$ is a unital C$^*$-subalgebra of $\mathrm{C}_{b}(X)$,
and for each morphism $f:X\to Y$ in $\mathcal{C}$, $G(f)$ is the restriction
of $\mathrm{C}(f)$ to $G(Y)$.
Then $G$ induces a preweak compactification functor
$\widetilde{G}:\mathcal{C}\to \mathbf{Top}_{\mathrm{cpt,2}}$ as follows.
Moreover, if each $G(X)$ separates the points of $X$, then $\widetilde{G}$ will be a weak compactification functor,
and if each $G(X)$ separates  points and closed sets then $\widetilde{G}$ will be a  compactification functor.

Set $\widetilde{G}=\mathrm{\Phi}\circ G$, where $\mathrm{\Phi}$ is as in
 the paragraph preceding Theorem~\ref{thrc}.
Then $\widetilde{G}$ is a covariant functor from $\mathcal{C}$ to
$\mathbf{Top}_{\mathrm{cpt,2}}$.
 For each $X\in \mathcal{C}$ define
$\tau_{X}:X\to \widetilde{G}(X)$ by $\tau_{X}(x)=\mathrm{ev}_{x}$, the evaluation at $x$ from
$G(X)$ to $\mathbb{C}$.
It is easy to see that $\widetilde{G}$ together with the correspondence $\tau$ form a
preweak compactification functor.

The functor $\widetilde{G}$ in the above example  is called
the \emph{induced functor} of $G$. The following theorem says that each
(preweak, weak) compactification
functor is (naturally isomorphic to) the induced functor of some functor.

\begin{proposition}\label{propredu}
Let $F: \mathcal{C}\to  \mathbf{Top}_{\mathrm{cpt,2}}$ be a preweak
compactification functor where
$\mathcal{C}$ is a subcategory of  $\mathbf{Top}_{3\frac{1}{2}}$.
Then there is a unique (up to natural isomorphism)  contravariant functor
$G:\mathcal{C}\to \mathbf{C^{*}com}_{1}$ satisfying properties listed above, such that
$F$ is natural isomorphic to the induced functor of $G$.
Moreover, $F$ is a weak compactification functor
if and only if for each $X$ in $\mathcal{C}$,  $G(X)$ separates the points of $X$.
If each object of $\mathcal{C}$ is locally compact, then
$F$ is a  compactification functor
if and only if for each $X$ in $\mathcal{C}$,  $G(X)$
separates  points and closed sets.
\end{proposition}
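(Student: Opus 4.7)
The plan is to recover $G$ explicitly from $F$ together with its correspondence $\tau$, and then to identify $\widetilde{G}$ with $F$ via Gelfand duality (Theorem~\ref{thrc}). First I would set, for each $X\in\mathcal{C}$,
$$
G(X):=\mathrm{ran}(\mathrm{C}(\tau_{X}))\subseteq \mathrm{C}_{b}(X).
$$
Since $\tau_{X}$ has dense range, Proposition~\ref{propcty}(2) shows that $\mathrm{C}(\tau_{X}):\mathrm{C}(F(X))\to \mathrm{C}_{b}(X)$ is an injective unital $*$-homomorphism, so $G(X)$ is a unital C$^{*}$-subalgebra of $\mathrm{C}_{b}(X)$ and $\mathrm{C}(\tau_{X})$ is a $*$-isomorphism onto $G(X)$. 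For a morphism $f:X\to Y$ in $\mathcal{C}$, the naturality square for $\tau$ gives $F(f)\tau_{X}=\tau_{Y}f$, whence $\mathrm{C}(f)\mathrm{C}(\tau_{Y})=\mathrm{C}(\tau_{X})\mathrm{C}(F(f))$. This implies $\mathrm{C}(f)(G(Y))\subseteq G(X)$, so I may define $G(f)$ to be the restriction of $\mathrm{C}(f)$ to $G(Y)$; functoriality is inherited from $\mathrm{C}$.

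Next I would construct a natural isomorphism $\eta:\widetilde{G}\cong F$. Applying $\Phi$ to the $*$-isomorphism $\mathrm{C}(\tau_{X}):\mathrm{C}(F(X))\to G(X)$ and composing with $\mathrm{ev}_{F(X)}^{-1}$ from Theorem~\ref{thrc} yields a homeomorphism $\eta_{X}:\Phi_{G(X)}=\widetilde{G}(X)\to F(X)$. A direct computation, tracing through the definitions of $\Phi$ and $\mathrm{ev}$, gives $\eta_{X}(\mathrm{ev}_{x}\upharpoonright_{G(X)})=\tau_{X}(x)$ for every $x\in X$, so $\eta$ intertwines the induced correspondence $\widetilde{\tau}$ with $\tau$. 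The naturality square $F(f)\eta_{X}=\eta_{Y}\widetilde{G}(f)$ may then be verified on the dense subset $\{\mathrm{ev}_{x}\upharpoonright_{G(X)}\colon x\in X\}\subseteq \Phi_{G(X)}$, where both sides evaluate to $\tau_{Y}(f(x))$ using $F(f)\tau_{X}=\tau_{Y}f$.

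For uniqueness, suppose $G':\mathcal{C}\to \mathbf{C^{*}com}_{1}$ is any other contravariant functor with the same properties and with $\widetilde{G'}\cong F$ naturally. Composing this isomorphism with $\eta^{-1}$ produces a natural isomorphism $\widetilde{G'}\cong \widetilde{G}$, that is $\Phi G'\cong \Phi G$; applying the functor $\mathrm{C}$ and invoking the natural isomorphism $\mathcal{G}:\mathrm{id}_{\mathbf{C^{*}com}_{1}}\cong \mathrm{C}\Phi$ of Theorem~\ref{thrc} then yields a natural isomorphism $G'\cong G$.

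Finally, the characterization of weak (respectively, strong) compactification functors is immediate from Proposition~\ref{prochar}: for each $X\in\mathcal{C}$, the map $\tau_{X}$ is injective if and only if $G(X)=\mathrm{ran}(\mathrm{C}(\tau_{X}))$ separates the points of $X$, and when $X$ is locally compact, $\tau_{X}$ is a homeomorphic embedding if and only if $G(X)$ separates points and closed sets. I expect the main obstacle to be the bookkeeping in the naturality square for $\eta$ together with the identification $\eta_{X}(\mathrm{ev}_{x}\upharpoonright_{G(X)})=\tau_{X}(x)$; both reduce to unwinding definitions in Theorem~\ref{thrc}, but require care to keep all arrows pointing the right way.
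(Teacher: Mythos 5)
Your proposal is correct and follows essentially the same route as the paper: you define $G(X)=\mathrm{ran}(\mathrm{C}(\tau_{X}))$ with $G(f)$ the restriction of $\mathrm{C}(f)$, build the natural isomorphism between $F$ and $\widetilde{G}=\Phi\circ G$ from $\Phi(\mathrm{C}(\tau_{X}))$ and the evaluation homeomorphism of Theorem~\ref{thrc}, prove uniqueness by applying $\mathrm{C}$ and the Gelfand natural isomorphism, and settle the weak/strong dichotomy via Proposition~\ref{prochar}. The only difference is cosmetic (you orient the isomorphism as $\widetilde{G}\to F$ rather than $F\to\widetilde{G}$ and spell out the check on the dense set of evaluations).
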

\begin{proof}
Define the functor $G:\mathcal{C}\to \mathbf{C^{*}com}_{1}$ as follows.
For each $X$ in $\mathcal{C}$, set $G(X)=F_{X}(\tau_{X},F(X))$, where $F_{X}$ is the
map associated to $X$ as defined in the paragraph preceding Lemma~\ref{lemord}.
 Then
$G(X)$ is a unital C$^{*}$-subalgebra of $\mathrm{C}_{b}(X)$.
Let $f:X\to Y$ be a morphism in $\mathcal{C}$.
By Definition~\ref{defcomp}, the following diagrams commute.

\[
\xymatrix{
X \ar[d]_{f}\ar[r]^{\tau_{X}} & F(X) \ar[d]^{F(f)} \\
Y\ar[r]_{\tau_{Y}}  & F(Y)\ ,
}
\ \ \ \  \   \ \ \ \ \ \ \
\xymatrix{
\mathrm{C}_{b}(X)
& \mathrm{C}(F(X))  \ar[l]_{\mathrm{C}(\tau_{X})}  \\
\mathrm{C}_{b}(Y)\ar[u]^{\mathrm{C}(f)}
& \mathrm{C}(F(Y)) \ar[u]_{\mathrm{C}(F(f))} \ar[l]^{\mathrm{C}(\tau_{Y})} .
}
\]

Therefore,
$\mathrm{C}(f)\mathrm{C}(\tau_{Y})=
\mathrm{C}(\tau_{X})\mathrm{C}(F(f))$, and hence
$\mathrm{C}(f)(G(Y))\subseteq G(X)$.
Now define $G(f):G(Y)\to G(X)$ to be the restriction of
$\mathrm{C}(f)$ to $G(Y)$. Since $\mathrm{C}$ is a contravariant functor, so is $G$.
Note that $\mathrm{C}(\tau_{X}): \mathrm{C}(F(X))\to G(X)$ is a $*$-isomorphism.
Thus we have the following commutative diagram:

\[
\xymatrix{
\mathrm{\Phi}(G(X))\ar[d]_{\mathrm{\Phi}(G(f))}
\ar[r]^(.44){\mathrm{\Phi}(\mathrm{C}(\tau_{X}))} &
\mathrm{\Phi}(\mathrm{C}(F(X)))
\ar[d]^{\mathrm{\Phi}(\mathrm{C}(F(f)))} &
F(X)\ar[l]_(.35){\mathrm{ev}_{X}}\ar[d] ^{F(f)}
\\
\mathrm{\Phi}(G(X)) \ar[r]_(.45){\mathrm{\Phi}(\mathrm{C}(\tau_{Y}))}&
\mathrm{\Phi}(\mathrm{C}(F(Y))) &
F(Y)\ar[l]^(.35){\mathrm{ev}_{Y}}
}
\]

\noindent
with horizontal arrows homeomophisms. Define
 $\sigma_{X}:F(X)\to \mathrm{\Phi}(G(X))$ by
 $\sigma_{X}=\mathrm{\Phi}(\mathrm{C}(\tau_{X}))^{-1}\circ \mathrm{ev}_{X}$.
 Then $\sigma_{X}$ is a homeomorphism. Therefore,
 $\sigma$ is a natural isomorphism from $F$ to $\widetilde{G}= \mathrm{\Phi}\circ G$.
For uniqueness of $G$, by Theorem~\ref{thrc} we have
\[
\mathrm{C}\circ F\cong \mathrm{C} \circ \mathrm{\Phi}\circ G\cong
\mathrm{id}_{\mathbf{C^{*}com}_{1}} \circ G=G.
\]
Finally, the last statement follows from Proposition~\ref{prochar}.
\end{proof}

The inducing functor of the Stone-\v{C}eck compactification functor
is $G(X)=\mathrm{C}_{b}(X)$ and
 that of the one-point compactification is $G(X)=\mathrm{C}_{0}(X)+\mathbb{C}1$.

Let $\mathbf{TG}_{\mathrm{lc,2}}$ be as above.
For each  $G$ in $\mathbf{TG}_{\mathrm{lc,2}}$, let $AP(G)$, $WAP(G)$,
 and $\mathcal{UC}(G)$
denote the subsets of $\mathrm{C}_{b}(G)$ containing
almost periodic functions, weakly almost periodic functions, and uniformly continuous functions, respectively.
Each of the maps $AP$, $WAP$,  and
$\mathcal{UC}$ gives rise to a  functor from $\mathbf{TG}_{\mathrm{lc,2}}$
to $\mathbf{C^{*}com}_{1}$. Therefore,  we obtain weak compactification functors
$\widetilde{AP}$ and $\widetilde{WAP}$, and compactification functor
$\widetilde{\mathcal{UC}}$
from $\mathbf{TG}_{\mathrm{lc,2}}$ to
$\mathbf{TG}_{\mathrm{cpt,2}}$.

\section{Discretization}
\noindent
There are two main approaches to correspond  to a topological space a discrete space (or a space
which is close to being discrete in some way, e.g., zero-dimensional).
In the first approach, one tries to add some new open sets to the original topology.
For example, adding all possible open sets to the topology of a space $X$ gives
$X_{d}$. The intermediate cases are of interests and give interesting examples in topology
(e.g., the Michael line \cite[Example~5.1.32]{en89}). The idea of ``partial discretization"
follows this approach \cite{bcm00}.

This approach seems to have no suitable counterpart
in function algebras, since partial discretizations
of a locally compact Hausdorff space is not generally locally compact.
For example, consider the ``usual" Hausdorff nonregular  space \cite[Example~1.5.6]{en89} which is
a partial $\omega_{1}$-discretization of the unit interval $[0,1]$ \cite{bcm00}.
Also, the Michael line which is constructed from the real line is not locally compact.
However, a central object in function algebras
(and C$^*$-algebras) is
 $\mathrm{C}_{0}(X)$, and  $X$ is the spectrum of
$\mathrm{C}_{0}(X)$, only when it is locally compact and Hausdorff (see Theorem~\ref{thrc}).

Another approach to  discretization (which is adopted here) is to consider
the ``discrete" subspaces of a topological space. In some sense, this is dual to the notion of
 compactification. For example, for any discrete space $X$, our definition implies that
$X$ is a discretization of $\beta X$ (in fact, it is the maximal discretization of $\beta X$,
see the last paragraph of this section).
\begin{definition}\label{defdisc}
Let $X$ be a topological space. Consider an ordered pair
$(f,Y)$ where $Y$ is a discrete space and $f: Y\to X$ is a continuous map.
\begin{itemize}
\item[(1)] $(f,Y)$ is called a \emph{discretization} of $X$ if  $f$ is a homeomorphic
embedding with dense range.
\item[(2)] $(f,Y)$ is called a \emph{weak discretization} of $X$ if $f$ is  a
homeomorphic embedding with open range.
\item[(3)] $(f,Y)$ is called a \emph{preweak discretization} of $X$ if $f$ is injective.
\end{itemize}
\end{definition}

It is obvious that each discretization of a submaximal space (i.e., each dense subset of $X$ is open \cite{ac95}) is a weak discretization.
Also each discretization of a $\mathrm{T}_{1}$ space is a  weak discretization
(see Theorem~\ref{thrt1} below).
Each weak discretization  of a topological space  is a preweak discretization.
If $(f,Y)$ is a weak discretization of $X$, then $(f,Y)$ is a discretization of any subspace
$Z$ of $X$ with  $f(Y)\subseteq Z\subseteq \overline{f(Y)}$.
Also, the property of having a discretization is hereditary in taking open
subsets. Note that if $X$ is already discrete, then the only  discretization
of $X$ (up to homeomorphism) is $X$ itself.

In Definition~\ref{defdisc}, the condition on the range of
the map in definition of weak discretization  enables us to describe weak discretization of a locally compact
Hausdorff space $X$ in terms of certain ideals of $\mathrm{C}_{0}(X)$ (see Section~7).

Let $X$ be a topological space and $X_{d}$ be the set $X$ with the discrete topology.
Then $(\mathrm{id}_{X}, X_{d})$ is a preweak discretization of $X$
which is not a weak discretization unless $X$ is already discrete. Alternatively, 
let $Z$ be any topological space and $Y$ be a discrete space. Suppose (without loss of generality) that
$Z\cap Y=\emptyset$. Set $X=Z\cup Y$ and define the topology of $X$ as follows. $U\subseteq X$
is open if $U\subseteq Y$ or there is an open set $V$ of $Z$ such that  $U=V\cup Y$.
Let $f:Y\to X$ be the injection map. Hence $Z$ is a subspace of $X$ and $Y$ is an open
discrete subspace of $X$.
Then $(f,Y)$ is both a discretization and a weak discretization of $X$.

As an example of discretization, let $Z$ be a discrete space. Then $(\beta_{Z},  Z)$ is a discretization of $\beta Z$.
An important special case is obtained by setting $Z=\mathbb{N}$. Thus $\mathbb{N}$
is  a discretization of $\beta \mathbb{N}$.
Also $\mathbb{N}$ is a discretization of $\{0\}\cup\{\frac{1}{n} \mid n\in \N\}$ which
is the one-point compactification of $\N$ with the relative topology from $\mathbb{R}$.

Each topological space has a preweak discretization.
Also the empty set is a (trivial) weak discretization of any topological space. But it is not
the case that each topological space has a  discretization.

\begin{theorem}\label{thrt1}
Let $X$ be a $\mathrm{T}_{1}$ space. Then for each discretization $(f,Y)$
of $X$. Then  $\delta(X)=f(Y)$. In particular, we have:
\begin{itemize}
\item[(1)]
each discretization of $X$ is a weak discretization of it;
\item[(2)]
if moreover $X$ has no
isolated points, then $X$ has no discretization and no nonempty weak discretization.
\end{itemize}
\end{theorem}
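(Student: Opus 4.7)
The plan is to first establish the key identity $f(Y)=\delta(X)$ and then read off both (1) and (2) as immediate consequences. The identity splits into two inclusions.

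For the inclusion $f(Y)\subseteq \delta(X)$, I would fix $y\in Y$ and aim to show that $\{f(y)\}$ is open in $X$. Since $Y$ is discrete, $\{y\}$ is open in $Y$, so the homeomorphic embedding $f$ furnishes an open set $U\subseteq X$ with $U\cap f(Y)=\{f(y)\}$. Here is where $\mathrm{T}_{1}$ enters: the singleton $\{f(y)\}$ is closed, so $V:=U\setminus\{f(y)\}$ is open in $X$, and by construction $V\cap f(Y)=\emptyset$. Density of $f(Y)$ forces $V=\emptyset$, i.e.\ $U=\{f(y)\}$, which means $f(y)$ is an isolated point of $X$. The reverse inclusion $\delta(X)\subseteq f(Y)$ is immediate from density: if $x\in X$ is isolated, the open singleton $\{x\}$ must meet the dense set $f(Y)$, so $x\in f(Y)$.

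With the identity $f(Y)=\delta(X)$ in hand, (1) drops out at once: $\delta(X)$ is a union of open singletons, hence open in $X$, so $f$ has open range and is therefore a weak discretization. For (2), suppose $\delta(X)=\emptyset$. A discretization would satisfy $f(Y)=\delta(X)=\emptyset$, whence $Y=\emptyset$, and denseness of $f(Y)=\emptyset$ in $X$ gives $X=\emptyset$; equivalently, a nonempty $X$ without isolated points admits no discretization. To rule out a nonempty weak discretization $(f,Y)$, I would rerun the first paragraph's argument without invoking density: if $f(Y)$ is open in $X$ and $f$ is a homeomorphic embedding with $Y$ discrete, then for each $y\in Y$ there is an open $U\subseteq X$ with $U\cap f(Y)=\{f(y)\}$, and openness of $f(Y)$ makes $\{f(y)\}=U\cap f(Y)$ open in $X$. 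Hence $f(Y)\subseteq\delta(X)=\emptyset$, forcing $Y=\emptyset$.

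The only mildly subtle step is the use of $\mathrm{T}_{1}$-ness together with density to promote a relatively open singleton in $f(Y)$ to an open singleton in $X$; everything else is bookkeeping. This is also the step that would fail if we dropped $\mathrm{T}_{1}$, which explains why the hypothesis is essential to the statement.
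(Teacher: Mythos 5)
Your proposal is correct and follows essentially the same route as the paper: both prove $f(Y)=\delta(X)$ by using $\mathrm{T}_{1}$-ness together with density to promote the relatively open singleton $U\cap f(Y)=\{f(y)\}$ to an open singleton of $X$ (the paper via $\overline{U}=\overline{U\cap f(Y)}=\{f(y)\}$, you via emptiness of the open set $U\setminus\{f(y)\}$), and then deduce (1) and (2) from the openness of $\delta(X)$. Your explicit handling of the nonempty weak discretization case in (2), where openness of $f(Y)$ replaces density, is a welcome bit of detail the paper leaves implicit, but it is not a different argument.
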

\begin{proof}
It is obvious that for each discretization $(f,Y)$
of $X$,  $\delta(X)\subseteq f(Y)$.
We show the reverse inclusion for a  $\mathrm{T}_{1}$ space $X$.
Let $z\in f(Y)$. Thus there is an open subset $U$ of $X$ such that
$U\cap f(Y)=\{z\}$. Taking closure (in $X$) from both sides we get
$\overline{U}=\overline{\{z\}}=\{z\}$. Therefore, $U=\{z\}$ is open in $X$ and hence
$f(Y)\subseteq \delta X$.

Part (1) follows from the first part and the fact that $\delta X$ is open in $X$.
Finally, part~(2) follows from the first part of the statement.
\end{proof}

The space $\mathbb{R}$ with the usual topology and its subspace $[0, 1]$
have no (weak) discretization (by Theorem~\ref{thrt1}).
On the other hand, $\mathbb{N}$ is a
preweak discretization
of these spaces. Alternatively, let $X$ be a set with the anti-discrete topology, i.e., the only open subsets are
$\emptyset$ and $X$. Then for each $x\in X$, $(j,\{x\})$ is a discretization of $X$
where $j$ is the inclusion map from $\{x\}$ to $X$. If $| X|\geq 2$, then
$\delta X=\emptyset$ (cf.~Theorem~\ref{thrt1}).

Let $\{X_{i}\mid i\in I\}$ be an infinite family of $\mathrm{T}_{1}$  spaces
such that $|X_{i}|\geq 2$, $i\in I$. Then $\prod_{i\in I}X_{i}$ with the Tychonoff
topology has no discretization. This is because this space has no isolated points.

The following which is an immediate consequence of Definitions~\ref{defcomp} and~\ref{defdisc},
justifies both definitions.

\begin{theorem}\label{thrdiscom}
Let $X$ be a discrete space, $Y$ be a compact Hausdorff space, and $f:X\to Y$ be a (continuous) map.
Then $(f,Y)$ is a  compactification
of $X$ if and only if $(f,X)$ is a  discretization of $Y$.
\end{theorem}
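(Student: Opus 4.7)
The plan is to observe that, after unfolding Definitions~\ref{defcomp}(1) and~\ref{defdisc}(1), the two statements are literally the same condition on the map $f:X\to Y$, so that the equivalence is essentially a matter of bookkeeping.

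First I would rewrite ``$(f,Y)$ is a compactification of $X$'' via Definition~\ref{defcomp}(1): this means $Y$ is compact Hausdorff and $f:X\to Y$ is a homeomorphic embedding with dense range. Since the theorem already assumes $Y$ is compact Hausdorff, this reduces to the single assertion that $f$ is a homeomorphic embedding with dense range. Next I would rewrite ``$(f,X)$ is a discretization of $Y$'' via Definition~\ref{defdisc}(1) (recalling that in that definition the second coordinate of the pair names the discrete \emph{domain} and the map goes into the space being discretized): this means $X$ is discrete and $f:X\to Y$ is a homeomorphic embedding with dense range. Since the theorem already assumes $X$ is discrete, this too reduces to the same assertion. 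Comparing the two reductions yields the equivalence in both directions.

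The only mild subtlety I expect is keeping the notational conventions of the two definitions aligned, since in Definition~\ref{defdisc} the roles of the two coordinates of $(\cdot,\cdot)$ differ from those in Definition~\ref{defcomp}; one has to be careful that the same map $f:X\to Y$ plays both roles simultaneously, which is precisely why the theorem is phrased with $X$ discrete and $Y$ compact Hausdorff. Continuity presents no genuine obstacle: any map out of a discrete space is automatically continuous, so the continuity clauses in both definitions come for free under the hypotheses.
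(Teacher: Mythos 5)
Your proposal is correct and matches the paper exactly: the paper gives no separate argument, stating only that the theorem is an immediate consequence of Definitions~\ref{defcomp} and~\ref{defdisc}, and your unfolding of the two definitions (together with the observation that continuity out of a discrete space is automatic) is precisely that immediate consequence.
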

It is natural to ask  which topological spaces have a discretization.
To answer this question first let us recall some topological notions. For a space $X=(X,\tau)$,
the $\alpha$-space of $X$ is
$X^{\alpha}=(X,\tau^{\alpha})$ where
\[
\tau^{\alpha}=\{U-N: U\in \tau\ \text{and}\ N\ \text{is nowhere dense in}\  X\}.
\]
The space $X$ is called an $\alpha$-\emph{space} if $X=X^{\alpha}$.
A space $X$ is called \emph{scattered} if every nonempty subspace has an isolated point.
$X$   is an $\alpha$-\emph{scattered} space if $X^{\alpha}$ is scattered.
We denote by $\delta X$  the set of isolated points of $X$.
It is known that
a space $X$ is scattered if and only if $X$ is $\alpha$-scattered and every nonempty nowhere
dense subspace of $X$ has an isolated point \cite[Theorem~2.1]{ro98}.
The following  gives a characterization of $\alpha$-scattered spaces \cite{ro98}.

\begin{theorem}[D.\,A. Rose]\label{thrrose}
For a topological space $X$, the following are equivalent:
\begin{itemize}
\item[(1)]
$X$ is $\alpha$-scattered;
\item[(2)] every somewhere dense subspace of $X$ has an isolated point;
\item[(3)] $\delta X$ is dense in $X$.
\end{itemize}
\end{theorem}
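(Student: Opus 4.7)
The plan is to make condition $(3)$ the pivot, proving $(3)\Leftrightarrow(2)$ by a direct density argument and $(3)\Leftrightarrow(1)$ via an auxiliary lemma stating that every $\alpha$-open singleton is already $\tau$-open.

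For $(3)\Rightarrow(2)$, given a somewhere dense $A$, the set $U:=\operatorname{int}(\overline{A})$ is nonempty and open; density of $\delta X$ supplies $x\in U\cap\delta X$, and since $\{x\}$ is open and meets $\overline{A}$ it must meet $A$, placing $x$ in $A$ and witnessing that $x$ is isolated in $A$. For $(2)\Rightarrow(3)$, if $\delta X$ were not dense then $A:=X\setminus\overline{\delta X}$ would be a nonempty open set, hence somewhere dense; any isolated point of $A$ would be witnessed by some open $U\subseteq X$ with $U\cap A=\{x\}$, but since $A$ is itself open this forces $\{x\}$ open in $X$ and $x\in\delta X\cap A=\emptyset$, a contradiction.

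The key lemma for the harder equivalence is: if $\{x\}=U\setminus N$ with $U\in\tau$ and $N$ nowhere dense, then $\{x\}\in\tau$. Indeed, $U\setminus\{x\}\subseteq N$ is nowhere dense; were $x\in\overline{U\setminus\{x\}}$, then $U=\{x\}\cup(U\setminus\{x\})\subseteq\overline{U\setminus\{x\}}$ would give $\operatorname{int}(\overline{U\setminus\{x\}})\supseteq U\neq\emptyset$, a contradiction. Hence $V:=U\setminus\overline{U\setminus\{x\}}$ is an open neighbourhood of $x$ contained in $\{x\}$. Using this, $(1)\Rightarrow(3)$ is immediate: if $W\subseteq X$ is any nonempty open set, then $W\in\tau\subseteq\tau^{\alpha}$, so scatteredness of $X^{\alpha}$ supplies an $\alpha$-isolated point $x$ of $W$, witnessed by an $\alpha$-open $V$ with $V\cap W=\{x\}$; but $V\cap W\in\tau^{\alpha}$, so the lemma makes $\{x\}$ open in $X$, whence $x\in\delta X\cap W$.

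For $(3)\Rightarrow(1)$, let $A\subseteq X$ be nonempty; we seek an isolated point of $A$ in the $\alpha$-subspace topology. If $A\cap\delta X\neq\emptyset$, any point there is already isolated in $A$ in the original topology, hence in $X^{\alpha}$. Otherwise $A\subseteq X\setminus\delta X$; since $\delta X$ is open (being a union of singleton open sets) and dense, $X\setminus\delta X$ is closed with empty interior, so $A$ is nowhere dense. Then $A\setminus\{x\}$ is nowhere dense for every $x\in A$, and $V:=X\setminus(A\setminus\{x\})=X\setminus(A\setminus\{x\})\in\tau^{\alpha}$ (take $U=X$, $N=A\setminus\{x\}$) satisfies $V\cap A=\{x\}$.

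The principal obstacle is the singleton lemma: without a $T_{1}$ assumption one cannot simply excise $\{x\}$ from an open set and preserve openness, so the argument must pivot on the dichotomy of whether $x\in\overline{U\setminus\{x\}}$, deriving openness in one case and a contradiction with nowhere-density in the other. Everything else is bookkeeping around the interplay $\tau\subseteq\tau^{\alpha}$ and the observation that $\delta X$ is automatically open.
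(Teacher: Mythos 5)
The paper does not prove this theorem at all: it is quoted verbatim from Rose's paper \cite{ro98} as a known result, so there is no internal proof to compare against. Your argument is a correct, self-contained proof. The structure (pivoting on condition (3), with the equivalence $(2)\Leftrightarrow(3)$ as routine density bookkeeping) is sound, and the real content is exactly where you locate it: the lemma that a singleton of the form $U\setminus N$ with $U$ open and $N$ nowhere dense must already be open, proved by the dichotomy on whether $x\in\overline{U\setminus\{x\}}$ --- this correctly avoids any $\mathrm{T}_1$ assumption. The converse direction $(3)\Rightarrow(1)$ is also handled cleanly: once $\delta X$ is dense and open, $X\setminus\delta X$ is closed and nowhere dense, so every subset $A$ of it has \emph{all} its points $\alpha$-isolated via $X\setminus(A\setminus\{x\})\in\tau^{\alpha}$, while subsets meeting $\delta X$ inherit a genuinely open singleton. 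Note that you never actually need the (true but nontrivial) fact that $\tau^{\alpha}$ is closed under arbitrary unions; the only closure property you invoke is the finite-intersection identity $(U_1\setminus N_1)\cap W=(U_1\cap W)\setminus N_1$, which is immediate. I find no gap.
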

The following theorem gives a characterization of having a discretization.
\begin{theorem}\label{thrchdisc}
Let $X$ be a $\mathrm{T}_{1}$ space. Then the following   are equivalent:
\begin{itemize}
\item[(1)]
$X$ has a discretization;
\item[(2)]
$X$ is $\alpha$-scattered;
\item[(3)]
$\delta X$  is dense in $X$.
\end{itemize}
\end{theorem}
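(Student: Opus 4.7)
The plan is to exploit the equivalence $(2)\Leftrightarrow (3)$ that is already provided by Rose's theorem (Theorem~\ref{thrrose}), so the work reduces to establishing $(1)\Leftrightarrow (3)$.

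For the direction $(1)\Rightarrow (3)$, I would invoke Theorem~\ref{thrt1} directly. If $(f,Y)$ is a discretization of the $\mathrm{T}_{1}$ space $X$, then that theorem identifies $f(Y)$ with $\delta X$. Since a discretization has dense range by Definition~\ref{defdisc}, it follows that $\delta X = f(Y)$ is dense in $X$, giving $(3)$.

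For the direction $(3)\Rightarrow (1)$, the strategy is to construct a discretization explicitly. Take $Y := \delta X$ equipped with the discrete topology and let $f: Y \to X$ be the inclusion map. The key observation is that each point of $\delta X$ is, by definition, isolated in $X$, hence $\{x\}$ is open in $X$ for every $x\in \delta X$; consequently the subspace topology that $\delta X$ inherits from $X$ is already the discrete topology, so $f$ is a homeomorphic embedding. By hypothesis $\delta X = f(Y)$ is dense in $X$, and therefore $(f,Y)$ satisfies all three conditions of being a discretization.

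This closes the circle: $(1)\Rightarrow (3)$ via Theorem~\ref{thrt1}, $(3)\Rightarrow (1)$ by the explicit construction, and $(2)\Leftrightarrow (3)$ by Theorem~\ref{thrrose}. I do not anticipate any real obstacle; the only point requiring a moment of care is verifying that on $\delta X$ the subspace topology from $X$ agrees with the discrete topology, which follows at once from the $\mathrm{T}_{1}$ assumption together with the definition of an isolated point.
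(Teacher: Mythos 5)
Your proposal is correct and follows the paper's own proof exactly: the equivalence $(2)\Leftrightarrow(3)$ via Theorem~\ref{thrrose}, $(1)\Rightarrow(3)$ via Theorem~\ref{thrt1}, and $(3)\Rightarrow(1)$ by the inclusion of $\delta X$ (which the paper merely calls ``obvious'' and you spell out). The only cosmetic remark is that the discreteness of the subspace topology on $\delta X$ needs only the definition of an isolated point, not the $\mathrm{T}_{1}$ hypothesis.
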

\begin{proof}
The equivalence of (2) and (3) follows from Theorem~\ref{thrrose}.
Also the implication $(3)\Rightarrow (1)$ is obvious.
Finally, the implication $(1)\Rightarrow (3)$ follows from Theorem~\ref{thrt1}.
\end{proof}
We will give another characterization for discretization in terms of function algebras in
 Section~7.

Let us classify all (weak, preweak) discretizations of a topological space.
Let $X$ be a topological space.
Let us denote by $\mathbf{Disc}(X)$, $\mathbf{Disc}_{w}(X)$, and $\mathbf{Disc}_{pw}(X)$
the class of all discretizations,
weak  discretizations, and preweak discretizations of $X$, respectively.
 There is a natural ordering on theses classes (similar to compactifications).
 In fact, let $(f,Y)$ and $(g,Z)$ be a pair of preweak discretizations
 of $X$. The relation $(f,Y)\leq (g,Z)$  means that there is  a map
 $h:Y\to Z$ such that $gh=f$, i.e., the  diagram

 \[
\xymatrix{
Y \ar[r]^{f} \ar[d]_{h} & X\\
 Z \ar[ur]_{g}&
}
\]
commutes. Note that  $h$ with this property is unique and injective. Also
$(f,Y)$ and $(g,Z)$ are called \emph{equivalent} if
there is a bijective map $h:Y\to Z$ such that $gh=f$.
It is easy to see that $(f,Y)$ and $(g,Z)$ are equivalent if and only
if $(f,Y)\leq (g,Z)$ and $(g,Z) \leq (f,Y)$. Also
the relation $\leq $ is reflexive and transitive.

Let $X$ be a topological space. By Definition~\ref{defdisc},
each discretization and each weak discretization of $X$
is embeddable in $X$, hence  are equivalent to
some subspace of $X$.
This shows that the equivalence classes of
 $\mathbf{Disc}(X)$ under the equivalence relation  above, forms
 a set,   denoted it by
 $\mathcal{D}(X)$. The corresponding quotient set of
 $\mathbf{Disc}_{w}(X)$ and $\mathbf{Disc}_{pw}(X)$ are denoted
 by $\mathcal{D}_{w}(X)$ and $\mathcal{D}_{pw}(X)$, respectively.
 Therefore, the ordering $\leq$ induces a partial ordering
 (again denoted by $\leq$)
 on  $\mathcal{D}(X)$,  $\mathcal{D}_{w}(X)$, and $\mathcal{D}_{pw}(X)$.
 Let us summarize theses results.
 \begin{proposition}
 Let $X$ be a topological space.
 Then the ordering $\leq$, as defined above, is a partial ordering on
 $\mathcal{D}(X)$,  $\mathcal{D}_{w}(X)$, and $\mathcal{D}_{pw}(X)$.
Each element of $\mathcal{D}_{pw}(X)$ is equivalent to a preweak discretization
of the form $(j,Y_{d})$ where $Y$ is a subset of $X$
and $j$ is the injection map from $Y_{d}$ to $X$. The subset $Y$ with this property
is unique. Therefore, $\mathcal{D}_{pw}(X)$ is a complete lattice
having $ X_{d}$ as the maximal element and $\emptyset$ as the minimal element.
 \end{proposition}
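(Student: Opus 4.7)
The plan is to classify preweak discretizations up to equivalence by arbitrary subsets of $X$, and then transport the complete lattice structure of the power set $\mathcal{P}(X)$ onto $\mathcal{D}_{pw}(X)$. Everything else is a direct check.

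First I would dispense with the partial ordering claims. The relation $\leq$ on $\mathbf{Disc}_{pw}(X)$ was already observed to be reflexive and transitive (the same diagram-chase as for compactifications), and these properties descend to the quotient. For antisymmetry modulo $\sim$, simply invoke the characterisation (already recorded) that $(f,Y)\leq (g,Z)$ and $(g,Z)\leq (f,Y)$ imply $(f,Y)\sim (g,Z)$; so $\leq$ is a partial order on $\mathcal{D}_{pw}(X)$, and by restriction on $\mathcal{D}_{w}(X)$ and $\mathcal{D}(X)$.

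Next I would produce the canonical representative. Given $(f,Y)\in \mathbf{Disc}_{pw}(X)$, set $Y_{0}:=f(Y)\subseteq X$ and let $j:(Y_{0})_{d}\to X$ be the inclusion. Because the domain is discrete, $j$ is automatically continuous, and it is injective, so $(j,(Y_{0})_{d})\in \mathbf{Disc}_{pw}(X)$. The corestriction $h:Y\to (Y_{0})_{d}$, $h(y):=f(y)$, is a bijection between discrete spaces (hence a homeomorphism) and satisfies $j\circ h=f$; thus $(f,Y)\sim (j,(Y_{0})_{d})$. For uniqueness, suppose $(j_{1},(Y_{1})_{d})\sim (j_{2},(Y_{2})_{d})$ with $Y_{1},Y_{2}\subseteq X$ and $j_{i}$ the inclusions; a witness $h$ satisfying $j_{2}\circ h=j_{1}$ is forced pointwise to equal the set-theoretic inclusion, so $Y_{1}\subseteq Y_{2}$, and by bijectivity $Y_{1}=Y_{2}$.

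Finally I would read off the lattice structure. Running the same diagram-chase without demanding that $h$ be bijective shows that $(j_{1},(Y_{1})_{d})\leq (j_{2},(Y_{2})_{d})$ is equivalent to $Y_{1}\subseteq Y_{2}$. Consequently the assignment $[(f,Y)]\mapsto f(Y)$ is a well-defined order isomorphism $\mathcal{D}_{pw}(X)\to (\mathcal{P}(X),\subseteq)$. Since $\mathcal{P}(X)$ is a complete lattice with maximum $X$ and minimum $\emptyset$, so is $\mathcal{D}_{pw}(X)$, with top represented by $(\mathrm{id}_{X},X_{d})$ and bottom by the trivial discretization indexed by $\emptyset$. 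The only step that requires even a moment's care is the forcing of $h$ to be literal inclusion in the uniqueness and order arguments, but this is immediate once one writes down $j_{2}\circ h=j_{1}$; there is no real obstacle.
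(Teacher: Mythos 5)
Your proposal is correct and follows essentially the same route as the paper, which simply declares the first part ``obvious as described above'' and the second part a direct consequence of the definition of preweak discretization; you have merely written out the details (canonical representative $(j,f(Y)_d)$, forcing $h$ to be the literal inclusion, and transporting the lattice structure from $\mathcal{P}(X)$) that the paper leaves implicit. No gaps.
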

\begin{proof}
The first part of the statement is obvious as described above. The second part follows directly from
Definition~\ref{defdisc}.
\end{proof}
 It is easy to describe all elements of  $\mathcal{D}_{w}(X)$ for an arbitrary topological space
 and  those of $\mathcal{D}(X)$ for a  locally compact  Hausdorff space.
 \begin{proposition}\label{propwdisc}
Let $X$ be a topological space. Then there is a one-to-one
correspondence between subspaces of $\delta X$ and  members of $\mathcal{D}_{w}(X)$
which assigns to each $Y\subseteq\delta X$ the weak discretization $(j,Y)$ of $X$, where $j$ is
the injection map from $Y$ to $X$. Therefore, $\mathcal{D}_{w}(X)$ is a complete
lattice having $\delta X$ as the maximal element and $\emptyset$ as the minimal element.
If $X$ is $\mathrm{T}_{1}$, then $X$ has a discretization if and only if
$\delta X$ is dense in $X$. In this case, the only element of $\mathcal{D}(X)$ is
the equivalence class of $(j,\delta X)$.
\end{proposition}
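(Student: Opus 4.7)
The plan is to reduce each weak discretization to a canonical representative $(j, Y)$ with $Y \subseteq \delta X$ a subspace (carrying its subspace topology, which will automatically be discrete) and $j : Y \to X$ the inclusion map, and then transport the Boolean lattice structure of $\mathcal{P}(\delta X)$ onto $\mathcal{D}_{w}(X)$ through this assignment.

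First I would verify that for each $Y \subseteq \delta X$ the pair $(j, Y)$ is indeed a weak discretization of $X$: since every $y \in Y$ is isolated in $X$, the singleton $\{y\}$ is open in $X$, whence $Y = \bigcup_{y \in Y}\{y\}$ is open in $X$, the subspace topology on $Y$ is discrete, and $j$ is a homeomorphic embedding with open range. Conversely, given an arbitrary weak discretization $(f, Z)$ of $X$, set $Y := f(Z)$; this $Y$ is open in $X$ by Definition~\ref{defdisc}. For any $y = f(z) \in Y$, the singleton $\{z\}$ is open in the discrete space $Z$, so $\{y\} = f(\{z\})$ is open in the subspace $f(Z) = Y$ (because $f$ is a homeomorphic embedding), hence open in $X$ since $Y$ itself is open. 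Thus $y \in \delta X$, giving $Y \subseteq \delta X$, and the restriction of $f$ to a bijection $Z \to Y$ between discrete spaces witnesses $(f, Z) \sim (j, Y)$ via the identity $j \circ f = f$.

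Uniqueness of the representative inside its equivalence class is immediate: if $(j, Y) \sim (j, Y')$ with $Y, Y' \subseteq \delta X$, the equivalence map $h : Y \to Y'$ must satisfy $j \circ h = j$, which forces $h$ to fix each point and therefore $Y = Y'$ as subsets of $X$. The same calculation shows that $(j, Y) \leq (j, Y')$ in $\mathcal{D}_{w}(X)$ iff $Y \subseteq Y'$, so $Y \mapsto [(j, Y)]$ is an order isomorphism from $\mathcal{P}(\delta X)$ onto $\mathcal{D}_{w}(X)$. Consequently $\mathcal{D}_{w}(X)$ inherits the complete-lattice structure with maximum $\delta X$ and minimum $\emptyset$.

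For the $\mathrm{T}_{1}$ statement, if $\delta X$ is dense in $X$ then $(j, \delta X)$, already known to be a weak discretization, has dense range and is therefore a discretization. Conversely, any discretization $(f, Z)$ of $X$ is a weak discretization by Theorem~\ref{thrt1}(1), so by the correspondence above it is equivalent to $(j, f(Z))$ with $f(Z) \subseteq \delta X$; the dense range of $f$ then yields density of $\delta X$. Theorem~\ref{thrt1} additionally gives $f(Z) = \delta X$, so $(j, \delta X)$ is the unique element of $\mathcal{D}(X)$. I do not foresee a substantive obstacle: every step reduces to elementary topological verifications, with Theorem~\ref{thrt1} doing the real work in the $\mathrm{T}_{1}$ case; the only delicate point is the trivial-looking uniqueness check showing that the canonical representative is well-defined modulo the equivalence relation.
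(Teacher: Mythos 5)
Your proof is correct and follows essentially the same route as the paper: the key observation in both is that the open range of a weak discretization must consist of isolated points, which sets up the bijection with subsets of $\delta X$, and the $\mathrm{T}_{1}$ case is handled by invoking Theorem~\ref{thrt1} (equivalently Theorem~\ref{thrchdisc}). You simply spell out the routine verifications (well-definedness on equivalence classes and the order isomorphism with $\mathcal{P}(\delta X)$) that the paper leaves implicit.
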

\begin{proof}
The first part  follows from the fact that
if $(f,Y)$ is a weak discretization of $X$, then each element of $f(Y)$ is an isolated point of $X$
(since $f(Y)$ is open in $X$). The last part follows from
Theorem~\ref{thrchdisc}.
\end{proof}

If a topological space is not $\mathrm{T}_{1}$, then $\mathcal{D}(X)$
may have many elements.

\begin{theorem}
Let $(f,Y)$ be a discretization of a topological space $X$. Then we have
$d(X)=|Y|$, where $d(X)$ is the density of $X$.
\end{theorem}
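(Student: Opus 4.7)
The plan is to verify both inequalities $d(X) \leq |Y|$ and $d(X) \geq |Y|$. The first is immediate from Definition~\ref{defdisc}: the map $f: Y \to X$ is injective with dense range, so $f(Y)$ is a dense subset of $X$ with $|f(Y)| = |Y|$, giving $d(X) \leq |Y|$.

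For the reverse inequality, set $E = f(Y)$, so $E$ is dense in $X$ and inherits the discrete topology (since $f$ is a homeomorphic embedding). Thus for each $e \in E$ one may select an open set $U_e \subseteq X$ containing $e$ with $U_e \cap E = \{e\}$. The key observation is that the family $\{U_e\}_{e \in E}$ is pairwise disjoint: for $e \neq e'$, the open set $U_e \cap U_{e'}$ meets $E$ only in $\{e\} \cap U_{e'}$, which is empty because $e \in U_{e'}$ would force $e \in U_{e'} \cap E = \{e'\}$. As $E$ is dense in $X$, any open set disjoint from $E$ must be empty, so $U_e \cap U_{e'} = \emptyset$. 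Granted this disjointness, for any dense $D \subseteq X$ with $|D| = d(X)$ I pick $d_e \in U_e \cap D$ (nonempty by density of $D$); the map $e \mapsto d_e$ is then automatically injective, giving $|Y| = |E| \leq d(X)$.

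The main obstacle is recognizing the pairwise disjointness of the $U_e$'s, which is what marries the two defining features of a discretization: the discreteness of $E$ supplies the isolating neighborhoods, while the density of $E$ forces any open set missing $E$ to be empty. Once this disjointness is established, the reverse inequality becomes a routine choice-function argument, and no separation hypothesis on $X$ is needed.
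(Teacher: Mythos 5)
Your proposal is correct and follows essentially the same route as the paper: both prove $d(X)\leq|Y|$ from density and injectivity, then isolate each point of $f(Y)$ by an open set, use density of $f(Y)$ to get pairwise disjointness of these open sets, and conclude $d(X)\geq|Y|$. You have merely filled in the details (the disjointness argument and the choice of points from an arbitrary dense set) that the paper leaves implicit.
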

\begin{proof}
Since $f(Y)$ is dense in $X$ and $f$ is injective, $d(X)\leq |Y|$. For each $y\in Y$ there
is an open subset $U_{y}$ of $X$ such that $U_{y}\cap f(Y)=\{f(y)\}$. Let $y,z\in Y$ and
$y\neq z$. Then $U_{y}\cap U_{z}=\emptyset$ (since $f(Y)$ is dense in $X$).
This shows that $d(X)\geq |Y|$. Therefore, $d(X)=|Y|$.
\end{proof}
\begin{corollary}
Let $(f_{1},Y_{1})$ and $(f_{2},Y_{2})$ be a pair of discretizations of a topological space $X$.
Then $|Y_{1}|=|Y_{2}|$, that is, $Y_{1}$ is homeomorphic to $Y_{2}$.
\end{corollary}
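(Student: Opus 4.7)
The plan is to apply the preceding theorem twice and then exploit that discrete topologies are determined by their cardinality.

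First, I would note that by the previous theorem, since $(f_1, Y_1)$ is a discretization of $X$, we have $d(X) = |Y_1|$. Applying the same theorem to $(f_2, Y_2)$ gives $d(X) = |Y_2|$. Comparing the two equalities yields $|Y_1| = |Y_2|$ immediately.

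Next, I would observe that $Y_1$ and $Y_2$ are discrete spaces (by Definition~\ref{defdisc}), so any bijection between them is both continuous and open, hence a homeomorphism. Since $|Y_1| = |Y_2|$, such a bijection exists, and therefore $Y_1 \cong Y_2$ as topological spaces.

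There is no real obstacle here: the corollary is essentially a one-line deduction from the theorem it follows. The only subtlety worth mentioning explicitly is that equinumerosity is not only necessary but also sufficient for two discrete spaces to be homeomorphic, which is why the cardinal equality upgrades to a topological equivalence without any further work.
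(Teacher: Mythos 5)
Your proof is correct and matches the paper's intended argument exactly: the corollary is stated without proof immediately after the theorem $d(X)=|Y|$, and the evident deduction is to apply that theorem to both discretizations and then note that two discrete spaces of the same cardinality are homeomorphic. Nothing is missing.
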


Let $X$ be a set with at least two elements equipped with the anti-discrete topology. Then each element of $\mathcal{D}(X)$
is equivalent to $(j,\{x\})$ where $x\in X$. Thus the number of elements of $\mathcal{D}(X)$
equals the cardinality of $X$.

The proof of the following proposition is straightforward.

\begin{proposition}\label{proprod}
Let $\{X_{i}\mid i\in I\}$ be a finite family of topological spaces and
$(f_{i},Y_{i})$ be a discretization of $X_{i}$, $i\in I$.
Then $(\prod_{i\in I} f_{i},\prod_{i\in I} Y_{i})$ is a discretization of
$\prod_{i\in I} X_{i}$.
\end{proposition}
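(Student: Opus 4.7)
The plan is to verify each of the three clauses in Definition~\ref{defdisc}(1) for the map $F := \prod_{i\in I} f_i : \prod_{i\in I} Y_i \to \prod_{i\in I} X_i$: that the domain is discrete, that $F$ is a homeomorphic embedding, and that $F$ has dense range.

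First, I would observe that $\prod_{i\in I} Y_i$ is discrete. Indeed, each $Y_i$ is discrete, and since $I$ is finite, for any $(y_i) \in \prod Y_i$ the singleton $\{(y_i)\} = \prod_{i\in I}\{y_i\}$ is a basic open set in the product topology. This is the single step where the finiteness of $I$ is essential; without it, the product of infinitely many nontrivial discrete spaces carries a nondiscrete topology, and the proposition would fail.

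Next, I would check that $F$ is a homeomorphic embedding. Since each $f_i$ is a homeomorphism from $Y_i$ onto its image $f_i(Y_i) \subseteq X_i$, the product map $F$ is a bijection onto $\prod_{i\in I} f_i(Y_i)$, and a routine application of the universal property (equivalently, a check on basic open sets) shows that a product of homeomorphic embeddings is again a homeomorphic embedding with respect to the product topology on $\prod X_i$ and the subspace topology on $\prod f_i(Y_i)$.

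Finally, for density of the range, take any nonempty basic open set $\prod_{i\in I} U_i$ of $\prod X_i$; since $I$ is finite, each $U_i$ is nonempty and open. By density of $f_i(Y_i)$ in $X_i$, choose $z_i = f_i(y_i) \in U_i \cap f_i(Y_i)$ for each $i$. Then $F((y_i)) = (z_i) \in \prod U_i \cap F(\prod Y_i)$, so $F(\prod Y_i)$ meets every nonempty basic open set, hence is dense in $\prod X_i$. No step is subtle; the only conceptual point is that finiteness of $I$ is used exclusively for discreteness of the domain, not for density or for the embedding property.
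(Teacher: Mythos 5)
Your proof is correct and is exactly the straightforward verification the paper has in mind (the paper omits the proof, declaring it straightforward): discreteness of the finite product of discrete spaces, a product of homeomorphic embeddings being a homeomorphic embedding, and density of a product of dense sets. Your remark that finiteness is needed only for discreteness of the domain is accurate and is the one point worth noting.
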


The following theorem states that the notion of discretization introduced here
is a dual of the notion of compactification.
Recall that a Hausdorff topological space is said to be
\emph{extremely disconnected} if the closure of every open subset is open.
A compact extremely disconnected space is called \emph{Stonean} \cite[Definition~III.1.6]{ta79}.

\begin{theorem}\label{thrdual}
Let $X$ be a Tychonoff space. Then:
\begin{itemize}
\item[(1)]
 $\delta(\beta(X))=X$ if and only if $X$ is discrete;
\item[(2)]
$\beta(\delta(X))=X$ if and only if $X$ is
 Stonean and $\alpha$-scattered.
\end{itemize}
\end{theorem}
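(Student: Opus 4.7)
The plan is to handle the two parts separately, with part (1) being essentially immediate and part (2) requiring a genuine use of Stoneanness via a $C^*$-embedding argument.

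For part (1), I would argue as follows. For the forward direction, assume $\delta(\beta X) = X$; then every $x \in X$ is isolated in $\beta X$, and since $X$ carries the subspace topology from $\beta X$, the singleton $\{x\}$ is open in $X$, so $X$ is discrete. For the converse, assume $X$ is discrete. Then $X$ is locally compact, and a locally compact dense subspace of a Hausdorff space is open in the ambient space, so $X$ is open in $\beta X$; each singleton $\{x\}$ with $x\in X$ is open in $X$ and hence in $\beta X$, giving $X \subseteq \delta(\beta X)$. Conversely, any $y \in \beta X \setminus X$ fails to be isolated in $\beta X$ because $X$ is dense in $\beta X$, so every neighborhood of $y$ meets $X$; hence $\delta(\beta X) \subseteq X$.

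For part (2), I would first dispatch the forward direction: if $\beta(\delta X) = X$, then $X$ is $\beta$ of a discrete space, which is a standard example of a Stonean space (extremal disconnectedness follows since for any $A \subseteq \delta X$ the set $A$ is clopen in $\delta X$, so its characteristic function extends continuously to $\beta(\delta X)$, forcing $\overline{A}$ to be clopen). Moreover $\delta X$ is dense in $X = \beta(\delta X)$, so by Theorem~\ref{thrrose} the space $X$ is $\alpha$-scattered.

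The converse is the substantive step. Assume $X$ is Stonean and $\alpha$-scattered, and set $D = \delta X$. Then $D$ is open (being a union of open singletons) and, by Theorem~\ref{thrrose}, dense in $X$. The universal property of Stone-\v{C}ech compactification applied to the inclusion $D \hookrightarrow X$ yields a continuous extension $\tilde{\imath}:\beta D \to X$, which is surjective since its range is closed and contains the dense set $D$. To show $\tilde{\imath}$ is a homeomorphism, it suffices to verify that $D$ is $C^*$-embedded in $X$; then by the defining universal property of $\beta D$ the map $\tilde{\imath}$ must be a homeomorphism. The main obstacle, and the place where Stoneanness really enters, is the extension of bounded continuous functions from $D$ to $X$. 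The key observation is that for every $A \subseteq D$, the set $A$ is open in $X$ (since $D$ is open and discrete), so its closure $\overline{A}$ is clopen in $X$ by extremal disconnectedness, and $\overline{A} \cap D = A$ because $D\setminus A$ is open and disjoint from $A$. Hence $\chi_{\overline{A}}$ is a continuous extension of $\chi_A$. An arbitrary bounded continuous (that is, arbitrary bounded) function $f : D \to \mathbb{C}$ is a uniform limit of finite linear combinations of characteristic functions of subsets of $D$, so it too extends continuously to $X$. This establishes the $C^*$-embedding, identifies $X$ with $\beta D = \beta(\delta X)$, and completes the proof.
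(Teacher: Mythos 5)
Your proof is correct, and its skeleton matches the paper's: part (1) reduces to identifying the isolated points of $\beta X$ inside the dense copy of $X$, and part (2) splits into ``$\beta$ of a discrete space is extremally disconnected'' for the forward direction and ``the Stone--\v{C}ech compactification of an open dense subspace of a Stonean space is the space itself'' for the converse. The difference is in how these ingredients are obtained. The paper handles part (1) by appealing to its Theorem~\ref{thrt1} (a discretization $(f,Y)$ of a $\mathrm{T}_1$ space satisfies $f(Y)=\delta(X)$), and for part (2) it simply cites Engelking (Theorem~6.2.7) and Takesaki (Corollary~III.1.8). You instead prove everything inline: for (1) you use the openness of a dense locally compact subspace, and for (2) you run the characteristic-function argument in both directions --- extending $\chi_A$ for $A\subseteq D$ to get extremal disconnectedness of $\beta D$, and, in the substantive converse, showing $\overline{A}$ is clopen with $\overline{A}\cap D=A$, approximating an arbitrary bounded function on $D$ uniformly by simple functions, and concluding that $D=\delta X$ is $C^*$-embedded in $X$, whence $X\cong\beta D$ by the universal property. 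This buys a self-contained proof of exactly the two facts the paper outsources; the one step you compress is the passage from ``each $\chi_A$ extends'' to ``every bounded $f$ extends,'' which needs the (routine) remark that a uniformly Cauchy sequence of continuous extensions is uniformly Cauchy on all of $X$ because $D$ is dense, so the extensions converge to a continuous extension of $f$.
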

\begin{proof}
Part (1) follows from Theorem~\ref{thrt1}. For (2),
suppose that $\beta(\delta(X))=X$. Since the Stone-\v{C}ech compactification
of any discrete space is extremely disconnected \cite[Theorem~6.2.7]{en89},
$X$ is Stonean. On the other hand,
$\delta(X)$ is dense in $\beta(\delta(X))=X$. Hence by Theorem~\ref{thrchdisc},
$X$ is $\alpha$-scattered.

Now suppose that $X$ is Stonean and $\alpha$-scattered.
By Theorem~\ref{thrchdisc}, $\delta(X)$ is dense in $X$.
On the other hand, the Stone-\v{C}ech compactification of
every open dense subspace of a Stonean space is the space itself \cite[Corollary~III.1.8]{ta79}.
Therefore, $\beta(\delta(X))=X$.
\end{proof}

Part (1) of Theorem~\ref{thrdual} holds for any compactification of
a Tychonoff space $X$. This follows from Theorem~\ref{thrdiscom} and Theorem~\ref{thrt1}.
However, part~(2) does not hold for arbitrary compactifications
(for example for the one-point compactification): Let $X$ be the one-point compactification of $\N$. Then $X$ is $\alpha$-scattered.
Also $\delta(X)= \N$  and so $\beta(\delta(X))=\beta \N$ which is not homeomorphic to $X$.
Thus, by Theorem~\ref{thrdual}, $X$ is not Stonean.
In fact, this can be seen directly as follows.
$X$ can be identified by $\{0\}\cup\{\frac{1}{n}\mid n\in\N\}$
with relative topology from $\R$. Set $U=\{\frac{1}{2n}\mid  n\in\N\}$.
Then the closure of $U$ is not open in $X$.

\section{Duality}
\noindent
We  give a categorical version of Theorem~\ref{thrdual} in this section.
Let us denote by $\mathbf{Top}$ the category of topological spaces
with continuous maps and by $\mathbf{Disc}$ the category of discrete spaces
with (continuous) maps.
\begin{definition}\label{defdiscfu}
A \emph{preweak discretization functor} is a functor
$F: \mathcal{C}\to  \mathbf{Disc}$ where
$\mathcal{C}$ is a subcategory of  $\mathbf{Top}$, such that
there is a correspondence $\sigma$ which assigns to each $X$ in $\mathcal{C}$
 a (continuous) injective map  $\sigma_{X}: F(X)\to X$ such that for each morphism
$f:X\to Y$ in $\mathcal{C}$
the following diagram commutes:
\[
\xymatrix{
F(X) \ar[d]_{F(f)}\ar[r]^{\sigma_{X}} & X \ar[d]^{f} \\
F(Y)\ar[r]_{\sigma_{Y}}  & Y\, .
}
\]
Such a functor $F$ is called a \emph{weak discretization functor} (respectively, \textit{discretization functor})
if each $\sigma_{X}$ is a homeomorphic embedding with open range (respectively, homeomorphic embedding
with dense range).
\end{definition}

Consider the functor $\mathrm{d}: \mathbf{Top}\to \mathbf{Disc}$ defined
by $\mathrm{d}(X)=X_{d}$ and for each morphism
$f:X\to Y$ in $ \mathbf{Top}$, $\mathrm{d}(f)=f: X_{d}\to Y_{d}$.
Then $\mathrm{d}$ is a preweak discretization functor which is not a
weak discretization functor. Alternatively, let $\mathcal{C}$ be the subcategory of $\mathbf{Top}$  defined as follows.
The objects of $\mathcal{C}$ are the same objects of $\mathbf{Top}$.
A morphism in $\mathcal{C}$ is a continuous map
$f:X\to Y$ with the property that $f(\delta X)\subseteq \delta Y$.
Define the functor $\delta :\mathcal{C}\to  \mathbf{Disc}$ by
$\delta(X)=\delta X$, the isolated points of $X$, and
for each morphism $f:X\to Y$ in $\mathcal{C}$,
$\delta(f)=f\upharpoonright_{\delta X}: \delta X\to \delta Y$.
Then $\delta :\mathcal{C}\to  \mathbf{Disc}$ is a
weak discretization functor which is not a
 discretization functor.
Let $\mathcal{D}$ be the full subcategory of $\mathcal{C}$ whose objects
are  $\alpha$-scattered and Stonean spaces.
Then $\delta :\mathcal{D}\to  \mathbf{Disc}$ is a
discretization functor.

The following duality theorem provides an inverse for the Stone-\v{C}ech compactification functor
on the category of discrete spaces.

\begin{theorem}\label{thrcdual}
Let $\delta :\mathcal{D}\to  \mathbf{Disc}$ be the discretization functor defined  above and
$\beta :\mathbf{Disc}\to \mathcal{D}$ be the restriction of the Stone-\v{C}ech compactification functor
to $\mathbf{Disc}$.
Then $\delta \beta\cong\mathrm{id}_{\mathbf{Disc}}$ and
$\beta\delta\cong\mathrm{id}_{\mathcal{D}}$, that is,
$\beta$ and $\delta$ are equivalences of categories and natural inverse of each other.
\end{theorem}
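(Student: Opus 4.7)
The plan is to use Theorem~\ref{thrdual} pointwise and then glue things together categorically by the universal property of $\beta$ on discrete domains. First I would verify that the two functors in the statement really do land where claimed. For $\beta\colon\mathbf{Disc}\to\mathcal{D}$: if $X$ is discrete then $\beta X$ is Stonean (cited result from Engelking) and $\delta(\beta X)=X$ is dense in $\beta X$, so $\beta X$ is $\alpha$-scattered by Theorem~\ref{thrchdisc}; moreover for any $f\colon X\to Y$ in $\mathbf{Disc}$ the extension $\beta f\colon\beta X\to\beta Y$ sends the isolated points $X$ into the isolated points $Y$, hence $\beta f$ is a morphism of $\mathcal{C}$ and thus of $\mathcal{D}$. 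For $\delta$ this is already part of the paragraph preceding the theorem.

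Next I would construct the two natural transformations. For the unit $\eta\colon\mathrm{id}_{\mathbf{Disc}}\Rightarrow\delta\beta$, set $\eta_X=\beta_X\colon X\to\delta(\beta X)$; this is well-defined because $\beta_X(X)\subseteq\delta(\beta X)$ (the discrete points of $X$ remain isolated in $\beta X$), and it is bijective by Theorem~\ref{thrdual}(1), hence a homeomorphism since both spaces are discrete. Naturality of $\eta$ in $f\colon X\to Y$ says $\delta(\beta f)\circ\beta_X=\beta_Y\circ f$, which is just the defining commutativity of the Stone-\v Cech extension restricted to isolated points. For the counit $\varepsilon\colon\beta\delta\Rightarrow\mathrm{id}_{\mathcal{D}}$, take $\varepsilon_X\colon\beta(\delta X)\to X$ to be the continuous extension (which exists because $X$ is compact Hausdorff) of the inclusion $j_X\colon\delta X\hookrightarrow X$. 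By Theorem~\ref{thrdual}(2), since $X$ is Stonean and $\alpha$-scattered, $\varepsilon_X$ is a homeomorphism. Naturality of $\varepsilon$ in a morphism $f\colon X\to Y$ of $\mathcal{D}$ reduces to checking that the two continuous maps
\[
f\circ\varepsilon_X,\ \varepsilon_Y\circ\beta(\delta f)\colon\beta(\delta X)\longrightarrow Y
\]
coincide; but both agree with $f\circ j_X=j_Y\circ\delta f$ on the dense subset $\delta X$, so they agree by uniqueness of the Stone-\v Cech extension into the compact Hausdorff space $Y$.

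Finally I would observe that $\eta$ and $\varepsilon$ are mutually inverse in the sense required: on $X\in\mathbf{Disc}$, $\delta(\varepsilon_{\beta X})\circ\eta_{\delta(\beta X)\text{-identified}}$ unwinds to the identity because $\varepsilon_{\beta X}\colon\beta(\delta(\beta X))\to\beta X$ extends the inclusion of $X=\delta(\beta X)$ into $\beta X$, which on the discrete subspace $X$ is the identity; and on $X\in\mathcal{D}$, $\varepsilon_X\circ\beta(\eta_{\delta X})$ is the unique extension of the identity on $\delta X$, hence the identity on $\beta(\delta X)$. Together with the pointwise isomorphism statements, this exhibits $(\delta,\beta,\eta,\varepsilon)$ as an adjoint equivalence.

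The hard work is essentially all packaged inside Theorem~\ref{thrdual}: once those two identifications $\delta\beta X=X$ and $\beta\delta X=X$ are in hand, the remaining argument is formal, with the only subtle point being that every diagram to be checked involves two continuous maps out of $\beta$(something) into a compact Hausdorff space, so I can always reduce naturality to equality on a dense discrete subset and invoke the universal property. The main obstacle I anticipate is purely bookkeeping: being careful that $\beta$ of a morphism in $\mathbf{Disc}$ is genuinely a morphism in $\mathcal{D}$ (i.e.\ preserves isolated points), and conversely that $\delta$ of a morphism in $\mathcal{D}$ is a well-defined map of discrete sets, which both follow from the fact that $\beta f$ extends $f$ and that morphisms in $\mathcal{C}$ were defined so as to send $\delta X$ into $\delta Y$.
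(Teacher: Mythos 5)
Your proposal is correct and follows essentially the same route as the paper: both construct the unit from $\beta_X$ restricted to its range (a homeomorphism onto $\delta(\beta X)$ by Theorem~\ref{thrdual}(1)) and the counit as the Stone--\v{C}ech extension of the inclusion $\delta X\hookrightarrow X$ (a homeomorphism by Theorem~\ref{thrdual}(2) via the Takesaki corollary), with naturality checked by density. You simply supply more detail than the paper's ``easily verified'' steps, including the well-definedness of $\beta$ as a functor into $\mathcal{D}$ and the triangle identities.
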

\begin{proof}
First let us show that $\delta \beta\cong\mathrm{id}_{\mathbf{Disc}}$.
Let $X$ be in $\mathbf{Disc}$. Consider the embedding
$\beta_{X}:X\to \beta X$ as described in Section~2. By Theorem~\ref{thrdual},
$\beta_{X}(X)=\delta(\beta(X))$. Consider the homeomorphism
$\xi_{X}:X\to \delta(\beta(X))$ which is the map $\beta_{X}$ onto its range.
It is easily verified that
$\xi: \mathrm{id}_{\mathbf{Disc}}\cong \delta \beta$.

To show that $\beta\delta\cong\mathrm{id}_{\mathcal{D}}$ let
$X$ be in $\mathcal{D}$, i.e.,
$X$ is a Stonean $\alpha$-scattered space.
By \cite[Corollary~III.1.8]{ta79},
$\beta(\delta(X))$ is homeomorphic to $X$. More precisely,
there is a homeomorphism
$\eta_{X}:X\to \beta(\delta(X))$ which is
the continuous extension  of
the injection map $\delta(X)\hookrightarrow X$.
One can easily check that
$\eta : \mathrm{id}_{\mathcal{D}}\cong \beta\delta$.
\end{proof}
\begin{corollary}
The categories $\mathbf{Disc}$ and $\mathcal{D}$ are equivalent.
\end{corollary}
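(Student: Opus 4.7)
The corollary is an immediate consequence of Theorem~\ref{thrcdual}, so the plan is essentially to invoke that theorem and point out that it already supplies exactly the data needed to witness an equivalence of categories. Recall that two categories $\mathcal{A}$ and $\mathcal{B}$ are equivalent precisely when there exist functors $F:\mathcal{A}\to\mathcal{B}$ and $G:\mathcal{B}\to\mathcal{A}$ together with natural isomorphisms $GF\cong \mathrm{id}_{\mathcal{A}}$ and $FG\cong\mathrm{id}_{\mathcal{B}}$ (see \cite{ma98}).

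First, I would take $F=\beta :\mathbf{Disc}\to \mathcal{D}$ (the restriction of the Stone-\v{C}ech compactification functor) and $G=\delta :\mathcal{D}\to \mathbf{Disc}$ (the discretization functor constructed just before Theorem~\ref{thrcdual}). Both are honest functors by the discussion preceding Theorem~\ref{thrcdual}; in particular, for morphisms $f$ in $\mathcal{D}$ one has $f(\delta X)\subseteq \delta Y$, so that $\delta(f)$ is well-defined, and the restriction $\beta$ to $\mathbf{Disc}$ lands in $\mathcal{D}$ because $\beta X$ of a discrete space is Stonean and $\alpha$-scattered.

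Next, Theorem~\ref{thrcdual} exhibits the required natural isomorphisms: the family $\xi_{X}:X\to \delta(\beta(X))$ assembles into a natural isomorphism $\xi:\mathrm{id}_{\mathbf{Disc}}\cong \delta\beta$, and the family $\eta_{X}:X\to \beta(\delta(X))$ assembles into a natural isomorphism $\eta:\mathrm{id}_{\mathcal{D}}\cong \beta\delta$. Therefore $\beta$ and $\delta$ are quasi-inverse functors, which by definition means that $\mathbf{Disc}$ and $\mathcal{D}$ are equivalent categories.

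There is no real obstacle here: all the work was already done in Theorem~\ref{thrcdual}; the corollary is merely the translation of ``quasi-inverse functors exist'' into ``the categories are equivalent.'' The only thing one might wish to double-check is that the two functors indeed have codomains as stated, but this has been verified in the paragraph defining $\delta:\mathcal{D}\to\mathbf{Disc}$ and in the statement of Theorem~\ref{thrcdual}.
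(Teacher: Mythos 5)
Your proposal is correct and matches the paper exactly: the corollary carries no separate proof in the paper and is read off directly from Theorem~\ref{thrcdual}, which already supplies the quasi-inverse pair $\beta$, $\delta$ and the natural isomorphisms $\xi:\mathrm{id}_{\mathbf{Disc}}\cong\delta\beta$ and $\eta:\mathrm{id}_{\mathcal{D}}\cong\beta\delta$. Nothing further is needed.
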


\section{Discretization in Function Algebras}
\noindent
In Section~3, we saw that various compactifications of a Tychonoff
space $X$ correspond to certain unital subalgebras of $\mathrm{C}_{b}(X)$.
In this section we show that various discretizations of a locally compact Hausdorff
space $X$ correspond to certain ideals of $\mathrm{C}_{0}(X)$.

Recall that for a  locally compact Hausdorff space $X$, there is one-to-one correspondence between open subsets of
 $X$ and closed ideals of $\mathrm{C}_{0}(X)$ \cite[Exercise~2.A]{wo93}.
 In fact, let $Tp(X)$ be the set of all open subsets of $X$ (i.e., the topology of $X$),
 and $\mathrm{Id}(\mathrm{C}_{0}(X))$ be the set of all closed (in uniform metric)
 ideals of the algebra $\mathrm{C}_{0}(X)$.
 Define the mapping
 $I:T(X)\to \mathrm{Id}(\mathrm{C}_{0}(X))$ as follows.
 For each open subset $U$ of $X$ set

 \[
 I(U)=\{f\in \mathrm{C}_{0}(X) : f=0\  \text{ on}\  X\setminus U\}.
\]
 Then $I$ is bijective and for each closed ideal $J$ of $\mathrm{C}_{0}(X)$,
 the inverse image of $J$ is $\{x\in X : f(x)\neq 0\ \text{for some}\ f\in J\}$.
In fact, $I$ is an order isomorphism between the complete lattices $(Tp(X),\subseteq)$
and $(\mathrm{Id}(\mathrm{C}_{0}(X)), \subseteq)$.
For each open subset
 $U$ of $X$, the map
 $\varphi_{U}:\mathrm{C}_{0}(U)\to I(U)$, which is defined by $\varphi_{U}(f)=f$ on $U$
 and $\varphi_{U}(f)=0$ on $X\setminus U$, is a $*$-isomorphism of C$^*$-algebras.

 Now let $(f,Y)$ be a weak discretization of a  locally compact Hausdorff space $X$.
 Define the closed ideal $I(f,Y)$ as follows.
The homeomorphism $f:Y\to f(Y)$ induces the $*$-isomorphism
$\mathrm{C}_{0}(f):\mathrm{C}_{0}(f(Y))\to \mathrm{C}_{0}(Y)$.
Consider the injective $*$-homomorphism
$\varphi_{f(Y)}\mathrm{C}_{0}(f)^{-1}:\mathrm{C}_{0}(Y) \to  \mathrm{C}_{0}(X)$.
The range of this map is denoted by $I(f,Y)$.
Note that with the notation of the preceding paragraph, we have
\[
I(f,Y)=I(f(Y))=\{g\in \mathrm{C}_{0}(X) : g=0\  \text{ on}\  X\setminus f(Y)\}.
\]
Therefore, we have the map $I: \mathbf{Disc}_{w}(X)\to \mathrm{Id}( \mathrm{C}_{0}(X))$.
Using this map, we can obtain an algebraic counterpart of discretization.
\begin{lemma}\label{lemdisc}
Let $X$ be a locally compact Hausdorff space and
$(f,Y)$ and $(g,Z)$ be a pair of weak discretizations of $X$.
Then $(f,Y)\leq (g,Z)$ if and only if $I(f,Y)\subseteq I(g,Z)$.
\end{lemma}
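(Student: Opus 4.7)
The plan is to reduce the statement to the much simpler assertion that $(f,Y)\le(g,Z)$ is equivalent to the set-theoretic inclusion $f(Y)\subseteq g(Z)$, and then invoke the order isomorphism $I$ between open subsets of $X$ and closed ideals of $\mathrm{C}_{0}(X)$ that was recalled just before the lemma.

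First I would unpack the definition of $I(f,Y)$. The paragraph preceding the lemma observes that
\[
I(f,Y)=\{\,h\in \mathrm{C}_{0}(X) : h=0 \text{ on } X\setminus f(Y)\,\}=I(f(Y)),
\]
where on the right $I$ is the bijection $Tp(X)\to\mathrm{Id}(\mathrm{C}_{0}(X))$. Since $f(Y)$ and $g(Z)$ are open in $X$ (weak discretizations have open range), and since $I$ is an order isomorphism of complete lattices, we get the equivalence
\[
I(f,Y)\subseteq I(g,Z)\iff f(Y)\subseteq g(Z).
\]
So everything reduces to proving $(f,Y)\le(g,Z)\iff f(Y)\subseteq g(Z)$.

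For the forward direction, suppose $(f,Y)\le(g,Z)$, so there exists $h:Y\to Z$ with $gh=f$; then $f(Y)=g(h(Y))\subseteq g(Z)$. For the reverse direction, assume $f(Y)\subseteq g(Z)$. Because $g$ is a homeomorphic embedding, it is injective on $Z$ with continuous inverse $g^{-1}:g(Z)\to Z$, so I may define
\[
h:=g^{-1}\circ f:Y\longrightarrow Z.
\]
This map satisfies $gh=f$ by construction. Since $Y$ is discrete, $h$ is automatically continuous (any map out of a discrete space is), which is all the relation $\le$ requires, so $(f,Y)\le(g,Z)$.

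The argument is essentially bookkeeping: there is no real obstacle beyond checking that the map $h=g^{-1}\circ f$ genuinely takes values in $Z$, which is exactly what the hypothesis $f(Y)\subseteq g(Z)$ guarantees, and that $I(f,Y)$ and $I(g,Z)$ really are the images under the order isomorphism $I$ of the open sets $f(Y)$ and $g(Z)$, which is built into the definition of $I(f,Y)$ recalled above. The only subtlety worth flagging is the implicit use of local compactness and Hausdorffness of $X$, which is needed so that the correspondence $Tp(X)\leftrightarrow\mathrm{Id}(\mathrm{C}_{0}(X))$ is available; it is not used anywhere else in the proof.
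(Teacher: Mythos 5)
Your proposal is correct and follows essentially the same route as the paper's proof: both directions reduce the relation $(f,Y)\leq (g,Z)$ to the inclusion $f(Y)\subseteq g(Z)$ and then pass through the order isomorphism between open subsets of $X$ and closed ideals of $\mathrm{C}_{0}(X)$. Your version merely spells out the construction $h=g^{-1}\circ f$ and the openness of the ranges a bit more explicitly than the paper does.
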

\begin{proof}
Suppose that $(f,Y)\leq (g,Z)$. Then there is a map
$h:Y\to Z$ such that $gh=f$.
Thus $f(Y)\subseteq g(Z)$. This implies that $I(f,Y)\subseteq I(g,Z)$.

Conversely, suppose that $I(f,Y)\subseteq I(g,Z)$.
Then $f(Y)\subseteq g(Z)$.
Since $f$ and $g$ are injective,
 there is a unique map $h:Y\to Z$ such that $gh=f$.
 Hence $(f,Y)\leq (g,Z)$.
\end{proof}
The above lemma shows that
$(f,Y)$ and $(g,Z)$ are equivalent if and only if $I(f,Y)=I(g,Z)$. Therefore,
the map $I: \mathbf{Disc}_{w}(X)\to \mathrm{Id}( \mathrm{C}_{0}(X))$ induces
the injective order preserving map $I: \mathcal{D}_{w}(X)\to \mathrm{Id}( \mathrm{C}_{0}(X))$.
We need to find the range of this map to complete the correspondence.

Let us recall some definitions from operator algebras \cite{mu90, ta79}. Let
$X$ be a locally compact Hausdorff space.
An element $p$ in $\mathrm{C}_{0}(X)$ is called a \emph{projection} if
$p=p^{*}=p^{2}$, that is, $p$ is the characteristic function of a clopen subset of $X$.
Let $p$ and $q$ be a pair of projections in $\mathrm{C}_{0}(X)$.
We write $p\leq q$ if the range of $p$ is contained in the range of $q$.
Thus $\leq$ is a partial ordering on the set of all projections.
A \textit{minimal  projection} in $\mathrm{C}_{0}(X)$ is a non-zero projection
which is minimal in the partially ordered set of all projections.
Recall the definition of a C$^{*}$-subalgebra of $\mathrm{C}_{0}(X)$ from
Section~3. The C$^{*}$-subalgebra generated by a subset of
$\mathrm{C}_{0}(X)$ is the smallest C$^{*}$-subalgebra containing that set.
The following result is well known \cite{bl94}.
\begin{theorem}\label{thrzero}
Let $X$ be a locally compact Hausdorff space.  Then
$X$ is zero-dimensional if and only if
$\mathrm{C}_{0}(X)$ is generated by its projections.
\end{theorem}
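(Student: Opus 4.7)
The plan is to identify the projections of $\mathrm{C}_{0}(X)$ with characteristic functions of compact open subsets of $X$, then apply Stone--Weierstrass (for $\mathrm{C}_{0}$) in one direction and an approximation/level-set argument in the other. Specifically, $p\in\mathrm{C}_{0}(X)$ satisfies $p=p^{*}=p^{2}$ if and only if $p=\chi_{V}$ for some compact open $V\subseteq X$: self-adjointness and idempotence force $p$ to be $\{0,1\}$-valued, continuity makes $V=p^{-1}(1)$ clopen, and the vanishing-at-infinity condition forces $V$ to be compact.

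For the forward direction, assume $X$ is zero-dimensional. Together with local compactness and Hausdorffness this yields a base of compact open sets: given $x\in X$ and an open neighborhood $U$ of $x$, shrink $U$ to a relatively compact open neighborhood $W$ of $x$, then pick a clopen subset $V$ of $W$ containing $x$; as a closed subset of $\overline{W}$, $V$ is compact. In particular, the projections of $\mathrm{C}_{0}(X)$ separate the points of $X$ and cover $X$ in the sense that every point lies in the support of some projection. The Stone--Weierstrass theorem for $\mathrm{C}_{0}(X)$ then ensures that the $\mathrm{C}^{*}$-subalgebra they generate is all of $\mathrm{C}_{0}(X)$.

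For the reverse direction, suppose $\mathrm{C}_{0}(X)$ is generated by its projections. Fix $x\in X$ and an open neighborhood $U$ of $x$; I will construct a compact open $V$ with $x\in V\subseteq U$. By local compactness I may assume $\overline{U}$ is compact, and Urysohn's lemma for locally compact Hausdorff spaces gives $f\in\mathrm{C}_{0}(X)$ with $0\leq f\leq 1$, $f(x)=1$, and $f\equiv 0$ off $U$. Since products and real linear combinations of projections in the commutative algebra $\mathrm{C}_{0}(X)$ are themselves (real linear combinations of) characteristic functions of compact open sets, the generating hypothesis gives a real linear combination $g=\sum_{i=1}^{n}\alpha_{i}\chi_{V_{i}}$ of projections with $\|f-g\|_{\infty}<1/3$. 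The function $g$ is constant on each atom of the finite Boolean algebra generated by $V_{1},\ldots,V_{n}$, so $V:=\{g>1/2\}$ is a finite union of such atoms; since $g\equiv 0$ on $X\setminus(V_{1}\cup\cdots\cup V_{n})$, this union lies inside the compact set $\bigcup_{i}V_{i}$ and is clopen in $X$, hence compact open. Finally $g(x)>2/3$ forces $x\in V$, while $g<1/3$ off $U$ forces $V\subseteq U$.

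The main obstacle is the reverse direction, where one must convert an abstract uniform approximation by elements of the $\mathrm{C}^{*}$-algebra generated by projections into an actual compact open neighborhood contained in $U$. The key point that makes this work is commutativity: in $\mathrm{C}_{0}(X)$, products of projections are projections, so the $\mathrm{C}^{*}$-algebra they generate is simply the closure of their (real) linear span, allowing the explicit level-set construction above to produce a clopen set of the required form.
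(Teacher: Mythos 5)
Your proof is correct. Note that the paper does not actually prove this theorem --- it is stated as well known and attributed to Blackadar's survey \cite{bl94} --- so there is no in-paper argument to compare against; what you give is the standard one: projections are exactly characteristic functions of compact open sets, Stone--Weierstrass for $\mathrm{C}_{0}(X)$ handles the forward direction, and in the reverse direction commutativity reduces the generated C$^{*}$-subalgebra to the closed linear span of projections, from which your level-set $\{g>1/2\}$ extraction correctly produces a compact open neighborhood basis. The only point worth flagging is that you implicitly take ``zero-dimensional'' to mean ``has a base of clopen sets''; since the paper never defines the term, that reading is the natural one and your argument is complete under it.
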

\begin{theorem}\label{thrdis}
Let $X$ be a locally compact Hausdorff space. Then
$X$ is discrete if and only if $\mathrm{C}_{0}(X)$ is generated by its minimal
projections.
\end{theorem}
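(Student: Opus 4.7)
My plan is to prove both implications by identifying the minimal projections of $\mathrm{C}_{0}(X)$ with characteristic functions of isolated points, and then comparing the C$^{*}$-subalgebra they generate with all of $\mathrm{C}_{0}(X)$. The forward direction should be easy, and the reverse will rely on Theorem~\ref{thrzero} plus a Urysohn-type separation argument.

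For the forward direction, assume $X$ is discrete. Then $X$ is locally compact Hausdorff and every singleton is clopen, so $\chi_{\{x\}}\in \mathrm{C}_{0}(X)$ for every $x\in X$. Each $\chi_{\{x\}}$ is clearly a projection, and it is minimal because any projection $q\leq \chi_{\{x\}}$ is the characteristic function of a clopen subset of $\{x\}$, which is either $\emptyset$ or $\{x\}$. Since $X$ is discrete, $\mathrm{C}_{0}(X)=c_{0}(X)$, and the finitely supported functions (which are finite linear combinations of the $\chi_{\{x\}}$'s) are uniformly dense in $c_{0}(X)$. Therefore $\mathrm{C}_{0}(X)$ is generated as a C$^{*}$-algebra by its minimal projections.

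For the reverse direction, the first step is to describe the minimal projections. A projection $p\in \mathrm{C}_{0}(X)$ is the characteristic function of a compact clopen subset $U$ of $X$. If $\mathrm{C}_{0}(X)$ is generated by its minimal projections, it is in particular generated by projections, so by Theorem~\ref{thrzero} the space $X$ is zero-dimensional. In a zero-dimensional Hausdorff space, if $U$ contains two distinct points $x\ne y$, one can find a clopen neighborhood $V$ of $x$ missing $y$, so that $U\cap V$ is a proper non-empty compact clopen subset of $U$; hence $\chi_{U}$ is not minimal. It follows that the minimal projections are precisely the $\chi_{\{x\}}$ with $x\in\delta X$.

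Next, I would show that the C$^{*}$-subalgebra $A$ generated by $\{\chi_{\{x\}}:x\in\delta X\}$ equals the closed ideal $I(\delta X)=\{f\in\mathrm{C}_{0}(X):f=0\text{ on }X\setminus\delta X\}$ from the ideal/open-set correspondence recalled at the start of Section~7. One inclusion is clear. For the other, given $f\in I(\delta X)$ and $\varepsilon>0$, the set $S_{\varepsilon}=\{x\in\delta X:|f(x)|\geq\varepsilon\}$ is finite (since $f|_{\delta X}\in c_{0}(\delta X)$), so $f_{\varepsilon}=\sum_{x\in S_{\varepsilon}}f(x)\chi_{\{x\}}\in A$ satisfies $\|f-f_{\varepsilon}\|_{\infty}<\varepsilon$. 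Thus the hypothesis forces $\mathrm{C}_{0}(X)=I(\delta X)$, meaning every $f\in\mathrm{C}_{0}(X)$ vanishes off $\delta X$. But for any $x_{0}\in X$, local compactness plus Hausdorffness (Urysohn's lemma) produces $f\in\mathrm{C}_{0}(X)$ with $f(x_{0})=1$, so $x_{0}\in\delta X$. Hence $X=\delta X$ is discrete. The main subtlety is ensuring the minimal projections are exactly characteristic functions of isolated points, which is why zero-dimensionality (via Theorem~\ref{thrzero}) must be invoked before ruling out larger minimal clopen compact sets.
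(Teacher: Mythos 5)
Your proof is correct and follows essentially the same route as the paper: the forward direction via density of finitely supported functions, and the reverse direction by invoking Theorem~\ref{thrzero} to get zero-dimensionality, splitting any compact clopen set with two points to show minimal projections are characteristic functions of isolated points, and then using a function with $f(x_{0})=1$ to force every point to be isolated. Your intermediate identification of the generated subalgebra with the ideal $I(\delta X)$ is a slightly more explicit packaging of the paper's approximation argument, but not a different method.
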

\begin{proof}
If $X$ is discrete then $\mathrm{C}_{0}(X)$ is generated by $\chi_{\{x\}}$, $x\in X$,
and these are minimal projections of  $\mathrm{C}_{0}(X)$. For the converse,
suppose that $\mathrm{C}_{0}(X)$ is generated by its minimal
projections. By Theorem~\ref{thrzero}, $X$ is
zero-dimensional.
 Let $\chi_{\{A\}}$ be a minimal  projection in
$\mathrm{C}_{0}(X)$ for some $A\subseteq X$. We claim that $A$ is a singleton.
Suppose otherwise that $a,b\in A$ and $a\neq b$. Then there is a compact clopen subset
$U$ of $X$ such that $a\in U$ and $b\not\in U$. Hence
$\chi_{\{U\}}$ is a  projection in $\mathrm{C}_{0}(X)$ and
$\chi_{U}<\chi_{A}$
which is a contradiction. Therefore $A$ is a singleton.

Now let $x\in X$; we show that $\{x\}$ is open in $X$.
There is  $f$ in $\mathrm{C}_{0}(X)$ such that $f(x)=1$. Since $\mathrm{C}_{0}(X)$
is generated by its minimal  projections, there is a sequence
$\{f_{n}\}_{n\geq 1}$ of linear span of projections of the form
$\chi_{\{a\}}$, $a\in X$, such that $f_{n}$ tend to $f$ uniformly on $X$.
Thus $f_{n}(x)\to f(x)=1$ and hence $f_{n}(x)\neq 0$, for some $n$.
This shows that $\{x\}$ is open in $X$. Therefore, $X$ is discrete.
\end{proof}
Let us denote by $\mathrm{Id_{gmp}}( \mathrm{C}_{0}(X))$ the set of
all closed  ideals of $ \mathrm{C}_{0}(X)$ which are generated (as
C$^{*}$-subalgebras) by their minimal projections. (Note that the zero ideal
is in $\mathrm{Id_{gmp}}( \mathrm{C}_{0}(X))$.)
\begin{proposition}\label{propgmp}
Let $X$ be a locally compact Hausdorff space. Then the map
$I: \mathcal{D}_{w}(X)\to \mathrm{Id_{gmp}}(\mathrm{C}_{0}(X))$
is an isomorphism of complete lattices.
\end{proposition}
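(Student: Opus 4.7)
The plan is to combine \lemref{lemdisc} with Proposition~\ref{propwdisc} and \thmref{thrdis}. \lemref{lemdisc} already shows that $I$ descends to an order-preserving injection $\mathcal{D}_{w}(X)\hookrightarrow \mathrm{Id}(\mathrm{C}_{0}(X))$, so the task reduces to identifying its image as exactly $\mathrm{Id_{gmp}}(\mathrm{C}_{0}(X))$. Once this is done, the ``complete lattice isomorphism'' conclusion is automatic: Proposition~\ref{propwdisc} supplies a complete lattice structure on $\mathcal{D}_{w}(X)$ (identified with $\mathcal{P}(\delta X)$), and an order isomorphism onto a poset transports completeness and all lattice operations.

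For the inclusion $I(\mathcal{D}_{w}(X))\subseteq \mathrm{Id_{gmp}}(\mathrm{C}_{0}(X))$, I would first use Proposition~\ref{propwdisc} to represent any class by $(j,Y)$ with $Y\subseteq \delta X$ and $j$ the inclusion. Then $Y$ is open in $X$ and discrete in the subspace topology, and the $*$-isomorphism $\varphi_{Y}:\mathrm{C}_{0}(Y)\to I(Y)$ carries the characteristic functions $\chi_{\{y\}}$, $y\in Y$, to the projections $\chi_{\{y\}}\in \mathrm{C}_{0}(X)$. These are minimal projections of $\mathrm{C}_{0}(X)$ (their range is one-dimensional), hence \emph{a fortiori} minimal in $I(Y)$. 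Since $\mathrm{C}_{0}(Y)$ is generated as a C$^{*}$-algebra by the $\chi_{\{y\}}$ (the easy direction of \thmref{thrdis} applied to the discrete space $Y$), so is $I(Y)$, placing $I(j,Y)$ in $\mathrm{Id_{gmp}}(\mathrm{C}_{0}(X))$.

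Conversely, for surjectivity, take $J\in \mathrm{Id_{gmp}}(\mathrm{C}_{0}(X))$. The open-set/closed-ideal correspondence recalled at the start of this section yields a unique open $U\subseteq X$ with $J=I(U)$ and a $*$-isomorphism $\varphi_{U}:\mathrm{C}_{0}(U)\to J$. Pulling the minimal-projection generators of $J$ back through $\varphi_{U}$ exhibits $\mathrm{C}_{0}(U)$ as generated as a C$^{*}$-algebra by its minimal projections, so by \thmref{thrdis} the space $U$ is discrete; being open in $X$ as well, $U\subseteq \delta X$, and $(j_{U},U)$ is a weak discretization with $I(j_{U},U)=J$. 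The delicate point in this step is the transfer of ``minimal projection'' across $\varphi_{U}$: one must note that because $J$ is an ideal of $\mathrm{C}_{0}(X)$, a projection $p\in J$ is minimal in $J$ precisely when $\varphi_{U}^{-1}(p)$ is minimal in $\mathrm{C}_{0}(U)$, which holds because the projection order is range containment in both algebras and $J$ absorbs all projections dominated by its own elements. Verifying this bijection between the projection lattices, and hence between their minimal elements, is where I expect to spend most of the care; once it is in hand, everything else is bookkeeping on top of the already-established lemmas.
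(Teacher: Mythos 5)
Your proposal is correct and follows essentially the same route as the paper's proof: injectivity and order-preservation from Lemma~\ref{lemdisc}, completeness of $\mathcal{D}_{w}(X)$ from Proposition~\ref{propwdisc}, and identification of the image via the open-set/closed-ideal correspondence together with Theorem~\ref{thrdis} in both directions. Your extra care about transferring minimality of projections across $\varphi_{U}$ (using that an ideal absorbs dominated projections) only makes explicit a step the paper passes over silently.
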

\begin{proof}
We have shown that the map $I: \mathcal{D}_{w}(X)\to \mathrm{Id}( \mathrm{C}_{0}(X))$
is  injective and order-preserving. By Proposition~\ref{propwdisc},
$\mathcal{D}_{w}(X)$ is a complete lattice. Thus we need only to show that the range of
this map is $\mathrm{Id_{gmp}}(\mathrm{C}_{0}(X))$.
Let $(f,Y)$ be a discretization of $X$. Since $\mathrm{C}_{0}(f(Y))$
is $*$-isomorphic to $I(f,Y)$ (via the map $\varphi_{f(Y)}$),
$I(f,Y)$ is generated by its minimal projections, by Theorem~\ref{thrdis}.
Thus the range of $I$ is contained in $\mathrm{Id_{mpr}}(\mathrm{C}_{0}(X))$.

Now let $J$ be in $\mathrm{Id_{mpr}}(\mathrm{C}_{0}(X))$.
Then there is an open subset $U$ of $X$ such that $J$ is the range of
$\varphi_{U}: \mathrm{C}_{0}(U)\to \mathrm{C}_{0}(X)$.
Since $J$ is generated by its minimal projections, so is
$\mathrm{C}_{0}(U)$. By Theorem~\ref{thrdis}, $U$ is discrete.
Thus $(j,U)$, where $j$ is the inclusion map from $U$ to $X$, is a weak discretization of $X$ and $I$ maps
$(j,U)$ to $J$.
\end{proof}
Recall that the map  $I:Tp(X)\to \mathrm{Id}(\mathrm{C}_{0}(X))$, defined above,
maps open dense subspaces of $X$ to closed essential ideals of
$\mathrm{C}_{0}(X)$ \cite[Exercise~2.A]{wo93}. Recall that
a closed ideal $J$ of $\mathrm{C}_{0}(X)$ is called \textit{essential} if
$J\cap K\neq \{0\}$ for every non-zero closed ideal $K$ of $\mathrm{C}_{0}(X)$.
Let us denote by
$\mathrm{Id_{gmp}^{ess}}(\mathrm{C}_{0}(X))$ the essential ideals in
$\mathrm{Id_{gmp}}(\mathrm{C}_{0}(X))$.
The map $I: \mathbf{Disc}_{w}(X)\to \mathrm{Id}( \mathrm{C}_{0}(X))$
induces the  map $I: \mathcal{D}(X)\to \mathrm{Id}( \mathrm{C}_{0}(X))$,
by Lemma~\ref{lemdisc}.
\begin{proposition}\label{propess}
Let $X$ be a locally compact Hausdorff space. Then the map
$I: \mathcal{D}(X)\to \mathrm{Id_{gmp}^{ess}}(\mathrm{C}_{0}(X))$
is bijective.
\end{proposition}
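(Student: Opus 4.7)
The plan is to leverage Proposition~\ref{propgmp}, which has already established that $I$ is an order isomorphism from $\mathcal{D}_{w}(X)$ onto $\mathrm{Id_{gmp}}(\mathrm{C}_{0}(X))$. All that remains is to identify, on the topological side, the equivalence classes corresponding to (strong) discretizations, and, on the algebraic side, the ideals which are essential, and then verify that these two subclasses correspond under $I$.

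First I would observe that since $X$ is locally compact Hausdorff (hence $\mathrm{T}_1$), Theorem~\ref{thrt1}(1) gives $\mathcal{D}(X)\subseteq \mathcal{D}_{w}(X)$, so $I$ restricts to an injective order-preserving map on $\mathcal{D}(X)$. Comparing Definition~\ref{defdisc}(1) and (2), a weak discretization $(f,Y)$ belongs to $\mathcal{D}(X)$ precisely when $f(Y)$ is dense in $X$ (the other requirements, homeomorphic embedding with open range, already hold by weak discretization). Thus the characterization to prove is: for $(f,Y)\in \mathcal{D}_{w}(X)$, the ideal $I(f,Y)$ is essential in $\mathrm{C}_{0}(X)$ if and only if $f(Y)$ is dense in $X$.

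For this, I would use the cited correspondence between open subsets of $X$ and closed ideals of $\mathrm{C}_{0}(X)$, under which $I(f,Y)=I(f(Y))$. The key facts are standard: if $U$ is a dense open subset of $X$, then for any nonzero closed ideal $K=I(W)$ (with $W$ a nonempty open set), density of $U$ forces $U\cap W\neq\emptyset$, whence $I(U\cap W)\subseteq I(U)\cap I(W)$ is nonzero, so $I(U)$ is essential; conversely, if $U$ is not dense, then $V:=X\setminus\overline{U}$ is nonempty and open, $V\cap U=\emptyset$, and $I(V)\cap I(U)=I(V\cap U)=\{0\}$, with $I(V)\neq\{0\}$, so $I(U)$ is not essential. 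Applied to $U=f(Y)$, this gives the required characterization.

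Combining these two steps, the restriction of the bijection of Proposition~\ref{propgmp} to $\mathcal{D}(X)$ lands exactly in $\mathrm{Id_{gmp}^{ess}}(\mathrm{C}_{0}(X))$, and every such essential ideal in $\mathrm{Id_{gmp}}(\mathrm{C}_{0}(X))$ comes from an element of $\mathcal{D}(X)$: its preimage under $I$ is some $(j,U)\in\mathcal{D}_{w}(X)$ with $U$ discrete, and essentiality of $I(U)$ forces $U$ dense, so $(j,U)\in\mathcal{D}(X)$. I expect no real obstacle beyond packaging these observations; the only point that requires a moment's care is the essential-ideal/dense-open-set dictionary, but this is routine once stated cleanly, and is exactly the content invoked from \cite[Exercise~2.A]{wo93}.
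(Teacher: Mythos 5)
Your proposal is correct and takes essentially the same route as the paper: the paper's own proof simply invokes Lemma~\ref{lemdisc}, Proposition~\ref{propgmp}, and the preceding remark that the correspondence $U\mapsto I(U)$ sends dense open subsets to closed essential ideals, which is exactly the dictionary you verify explicitly. Your write-up just supplies the details (the dense-open/essential-ideal equivalence and the observation that a weak discretization is a discretization precisely when its range is dense) that the paper leaves to the cited remarks.
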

\begin{proof}
The statement follows from Lemma~\ref{lemdisc}, Proposition~\ref{propgmp}, and  the above remarks.
\end{proof}
The above proposition gives an algebraic characterization of discretization.
\begin{corollary}\label{cordisc}
Let $X$ be a locally compact Hausdorff space. Then the following are equivalent:
\begin{itemize}
\item[(1)]
$X$ has a discretization;
\item[(2)]
$\mathrm{C}_{0}(X)$ has a (unique) closed essential ideal which is generated by
its minimal projections.
\end{itemize}
\end{corollary}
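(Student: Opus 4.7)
The plan is to deduce the corollary essentially as a direct consequence of Proposition~\ref{propess} together with the uniqueness part of Proposition~\ref{propwdisc}. Since the hard work has already been done, I would just unpack the translation.

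First I would observe that $X$, being locally compact Hausdorff, is in particular $\mathrm{T}_{1}$, so Proposition~\ref{propwdisc} applies: $X$ has a discretization if and only if $\delta X$ is dense in $X$, and in that case the unique element of $\mathcal{D}(X)$ is the equivalence class of $(j,\delta X)$ where $j$ is the inclusion map. In other words, $X$ has a discretization if and only if $\mathcal{D}(X) \neq \emptyset$, and $\mathcal{D}(X)$ has at most one element.

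Next I would invoke Proposition~\ref{propess}, which gives a bijection $I : \mathcal{D}(X) \to \mathrm{Id_{gmp}^{ess}}(\mathrm{C}_{0}(X))$. Combining the two, $\mathcal{D}(X) \neq \emptyset$ if and only if $\mathrm{Id_{gmp}^{ess}}(\mathrm{C}_{0}(X)) \neq \emptyset$, i.e., if and only if $\mathrm{C}_{0}(X)$ admits a closed essential ideal generated by its minimal projections. Moreover, since $|\mathcal{D}(X)| \leq 1$, the bijectivity of $I$ forces such an ideal to be unique when it exists, which accounts for the parenthetical ``(unique)'' in the statement. For concreteness one can identify the ideal: by the proof of Proposition~\ref{propgmp} it corresponds to the open dense set $\delta X$, so the unique ideal is
\[
I(\delta X) \;=\; \{\,g \in \mathrm{C}_{0}(X) : g = 0 \text{ on } X \setminus \delta X\,\}.
\]

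No step looks like a real obstacle since everything is already in place. The only thing to be slightly careful about is confirming that uniqueness really follows from Proposition~\ref{propwdisc} in the locally compact Hausdorff setting (which it does, as $\mathrm{T}_{1}$ is automatic), and noting that the zero ideal is allowed in $\mathrm{Id_{gmp}^{ess}}$ only when the whole algebra is zero, so the ``(unique)'' qualifier refers to the nontrivial ideal arising from a genuine discretization.
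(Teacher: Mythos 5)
Your proposal is correct and follows essentially the same route as the paper: the equivalence is read off from the bijection $I:\mathcal{D}(X)\to \mathrm{Id_{gmp}^{ess}}(\mathrm{C}_{0}(X))$ of Proposition~\ref{propess}, and the uniqueness of the ideal comes from the fact (Proposition~\ref{propwdisc}, applicable since $X$ is $\mathrm{T}_{1}$) that $\mathcal{D}(X)$ has at most one element, which is exactly how the paper handles the parenthetical ``(unique)'' in the subsequent corollary. Your explicit identification of the ideal as $I(\delta X)$ is a harmless and correct addition.
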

\begin{corollary}
Let $X$ be a locally compact Hausdorff space. Then the set
$\mathrm{Id_{gmp}^{ess}}(\mathrm{C}_{0}(X))$ has at most one element.
\end{corollary}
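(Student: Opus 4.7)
The plan is to exploit the bijection established in Proposition~\ref{propess} together with the uniqueness statement at the end of Proposition~\ref{propwdisc}. Since a locally compact Hausdorff space is automatically $\mathrm{T}_{1}$, Proposition~\ref{propwdisc} applies to $X$ without any additional hypothesis.

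First I would observe that Proposition~\ref{propess} gives a bijective correspondence
\[
I: \mathcal{D}(X)\longrightarrow \mathrm{Id_{gmp}^{ess}}(\mathrm{C}_{0}(X)),
\]
so the claim reduces to showing that $|\mathcal{D}(X)|\leq 1$. Next I would split into two cases. If $X$ admits no discretization, then $\mathcal{D}(X)=\emptyset$ and the target set is empty as well, so there is nothing more to prove. If $X$ does admit a discretization, then by the last part of Proposition~\ref{propwdisc} the only element of $\mathcal{D}(X)$ is the equivalence class of $(j,\delta X)$, where $j:\delta X\hookrightarrow X$ is the inclusion, so $|\mathcal{D}(X)|=1$. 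In either case $|\mathcal{D}(X)|\leq 1$, and transporting this bound across the bijection $I$ gives $|\mathrm{Id_{gmp}^{ess}}(\mathrm{C}_{0}(X))|\leq 1$.

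There is essentially no obstacle here: the statement is an immediate packaging of the two earlier results. The only thing worth flagging explicitly is why Proposition~\ref{propwdisc}'s $\mathrm{T}_{1}$ hypothesis is satisfied, which is immediate since every locally compact Hausdorff space is Hausdorff and hence $\mathrm{T}_{1}$. It might also be worth noting, as a conceptual remark, that the corollary is really the topological statement "a $\mathrm{T}_{1}$ space has at most one discretization up to equivalence" translated across the Gelfand-style duality between open subspaces of $X$ and closed ideals of $\mathrm{C}_{0}(X)$ recalled at the start of Section~7.
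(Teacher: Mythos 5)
Your proposal is correct and follows essentially the same route as the paper, which likewise deduces the corollary from Proposition~\ref{propwdisc} (uniqueness of the discretization class for a $\mathrm{T}_{1}$ space) combined with the bijection of Proposition~\ref{propess}. Your explicit case split and the remark on the $\mathrm{T}_{1}$ hypothesis merely spell out details the paper leaves implicit.
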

\begin{proof}
The statement follows from  Propositions~\ref{propwdisc} and \ref{propess}.
\end{proof}

\end{document}